\newtheorem{theo}{Theorem}
\newtheorem{cor}{Corollary}
\newtheorem{lem}{Lemma}
\newtheorem{defn}{Definition}
\newtheorem{prop}{Proposition}
\theoremstyle{remark}
\providecommand{\sca}[1]{\langle #1 \rangle}
\def\E{\mathcal{E}}
\newcommand{\ps}{\psi^{\star}_{(s)}}
\def\Fs{\mathcal{F}^+_{s, \star}}
\def\Ms{\mathcal{L}_{s, \star}}
\def\C{\mathcal{C}}
\def\OrliczGmix{G_{\vec{h}, \vec F^1, \vec F^2}^{orlicz}}
\def\OrliczAmix{as_{\vec{h}, \vec F^1, \vec F^2}^{orlicz}}
\def\OrliczAmixith{as_{\vec h, i,  \vec F^1, \vec F^2}^{orlicz}}
\def\OrliczGmixith{G_{\vec h, i,  \vec F^1, \vec F^2}^{orlicz}}
\def\OrliczAmixjth{as_{\vec h, j,  \vec F^1, \vec F^2}^{orlicz}}
\def\OrliczAmixkth{as_{\vec h, k,  \vec F^1, \vec F^2}^{orlicz}}
\def\OrliczAmixrk{as_{\vec{h}_{r,k}, \vec{F}^1_{r,k}, \vec{F}^2_{r,k}}^{orlicz}}
\def\OrliczGmixrk{G_{\vec{h}_{r,k}, \vec{F}^1_{r,k}, \vec{F}^2_{r,k}}^{orlicz}}
\def\OrliczG{G_{h, F_1, F_2}^{orlicz}}
\def\OrliczA{as_{h,F_1, F_2}^{orlicz}}
\def\OrliczAk{as_{h_k, F^1_k, F^2_k}^{orlicz}}
\def\OrliczGk{G_{h_k, F^1_k, F^2_k}^{orlicz}}
\def\OrliczAi{as_{h_i, \breve{F_i}, \breve{F_i}}^{orlicz}}
\def\OrliczGi{G_{h_i, \breve{F_i}, \breve{F_i}}^{orlicz}}
\def\F{{\cal F}^+_{\psi^*}}
\def\R{{\mathbb R}}
 \def\bbR{{\mathbb R}}
 \def\Cs{\mathcal{C}_s}
\def\lam{\lambda}
\def\to{\rightarrow}
\def\pmx{\begin{pmatrix}}
\def\emx{\end{pmatrix}}
\def\Hess{\nabla^2}
\def\det{{\rm det}}
 \def\M{\mathcal{L}_{\psi^*}}
\def\R{\mathbb R}
\def\OrliczAS{as_{h, s}^{orlicz}}
\def\OrliczGS{G_{h, s}^{orlicz}}
\begin{document}
\title{Affine isoperimetric inequalities in the functional Orlicz-Brunn-Minkowski theory
\footnote{Keywords: affine isoperimetric inequalities, affine surface area, functional inequality, geometrization of probability, geominimal surface area, $L_p$ affine surface area,  $L_p$-Brunn-Minkowski
theory, $L_p$ geominimal surface area, Orlicz-Brunn-Minkowski theory, Orlicz-Minkowski inequality,  the Blaschke-Santal\'{o} inequality.}}

\author{Umut Caglar and Deping Ye }
\date{}
\maketitle
\begin{abstract} In this paper, we develop a basic theory of Orlicz affine and geominimal surface areas for convex and $s$-concave functions.   We prove some basic properties for these newly introduced functional affine invariants and establish related functional affine isoperimetric inequalities as well as  functional Santal\'o type inequalities.  

\vskip 2mm 
2010 Mathematics Subject Classification:   52A20, 53A15, 46B, 60B 
\end{abstract}

   \section{Introduction} 
   
 The definition of Orlicz addition by Gardner, Hug and Weil \cite{Gardner2014} and Xi, Jin and Leng \cite{XJL} brings new impulses to the rapidly developing Orlicz-Brunn-Minkowski theory for convex bodies. In fact, Orlicz addition makes it possible to establish the Orlicz-Brunn-Minkowski inequality, develop Orlicz mixed volume,
 and prove the Orlicz-Minkowski inequality for the Orlicz mixed volume. However, the first steps in this theory were actually the Orlicz affine isoperimetric inequalities for Orlicz centroid bodies and Orlicz projection bodies by Lutwak, Yang, and Zhang \cite{LYZ2010a, LYZ2010b}.  An affine isoperimetric inequality in the Orlicz-Brunn-Minkowski theory provides upper and/or lower bounds,  in terms of volume,  for functionals defined on convex bodies which are invariant under all volume preserving linear transforms; and it would be ideal if these functionals attain their maximum or minimum at (and only at) ellipsoids. It is convenient and natural to call affine isoperimetric inequalities in the Orlicz-Brunn-Minkowski theory as Orlicz affine isoperimetric inequalities,  just like the $L_p$ affine isoperimetric inequalities in the $L_p$-Brunn-Minkowski theory.  Another example of Orlicz affine isoperimetric inequalities is the one by the second author \cite{Ye2014a2}, which provides bounds for Orlicz affine and geominimal surface areas, that is,  under certain conditions,  Orlicz affine and geominimal surface areas attain their maximum (or minimum) at and only at ellipsoids.    

Developing and extending affine surface areas has been a central goal in convex geometry for decades. The following are the major steps. The first major step was due to Blaschke \cite{Bl1}, who defined the classical $L_1$ affine surface area.  Then, Lutwak \cite{Lutwak1996} introduced $L_p$ affine surface areas for $p>1$. Based on some beautiful integral formulas for $L_p$ affine surface areas (which essentially involve Gauss curvature and the support function), Sch\"{u}tt and Werner \cite{SW2004} proposed a further extension of $L_p$ affine surface area to $-n\neq p\in \bbR$. Later, Ludwig and Reitzner \cite{LudR} and Ludwig  \cite{Ludwig2009} introduced the general affine surface areas for non-homogeneous functions. Note that the above affine surface areas are not continuous with respect to the Hausdorff metric. However, the classical $L_1$ geominimal surface area, which is closely related to the classical $L_1$ affine surface area, was proved to be continuous with respect to the Hausdorff metric and to be a bridge between several different type of geometries (see Petty \cite{Petty1974} for more details).  Since there are no convenient integral formulas for $L_p$ geominimal
surface areas for $p > 1$, for the definition of the $L_p$ geominimal surface area for $-n\neq p\in \bbR$, a different approach from those used in  \cite{Ludwig2009, LudR, SW2004} is needed; and that was proposed in \cite{Ye2014a1} (actually, such an approach was motivated by Lutwak's definition of the $L_p$ geominimal surface area for $p>1$ \cite{Lutwak1996} and the work \cite{Xiao2010} by Xiao). In fact, the approach in \cite{Ye2014a1} also provides alternative definitions for the $L_p$ affine surface areas for $-n\neq p \in \bbR$. This opens the door to develop Orlicz affine and geominimal surface areas \cite{Ye2014a2}, as well as their duals for star bodies \cite{Ye2015b2} (based on the dual Orlicz mixed volume in \cite{GardnerYe2015}) and their mixed  counterparts involving multiple convex bodies \cite{Ye2014a2, Ye2015a}. See e.g., \cite{Ludwig2009, Lutwak1996, Petty1985, WernerYe2008,  Ye2013, Ye2014a2, Ye2014a1} for affine isoperimetric inequalities related to affine and geominimal surface areas.

The geometry of log-concave functions aims to study the geometric properties of log-concave functions, in a manner similar to the geometry of convex bodies (also known as convex geometry or the Brunn-Minkowski theory of convex bodies).  In fact, there is a ``dictionary" between these two theories, for instance, integral translates to volume, log-concave functions to convex bodies, the Gaussian function 
$e^{-\frac{\|\cdot\|^2}{2}}$ to the unit Euclidean ball, polar duals of log-concave functions to polars of convex bodies, and the integral product to the Mahler volume product. The geometry of log-concave functions extends fundamental notions and results in convex geometry nontrivially to their functional counterparts. Moreover, it usually provides much more powerful tools and far-reaching results than its geometric counterpart (indeed,  every convex body can be associated with a log-concave function). See, e.g.,  Klartag and Milman \cite{KlartagMilman} and Milman \cite{VMilman2008} for more detailed motivation and references.

An important functional affine isoperimetric inequality is the functional  Blaschke-Santal\'o inequality  \cite{ArtKlarMil, KBallthesis,  FradeliziMeyer2007, Lehec2009, Lehec2009b}, which is essential for the isoperimetric inequalities for $L_p$ affine surface areas of log-concave and s-concave functions \cite{Caglar-6, CaglarWerner, CaglarWerner2}. In their seminal paper \cite{ArtKlarSchuWer},  Artstein-Avidan,  Klartag, Sch\"{u}tt and Werner provided a definition of $L_1$ affine surface area for $s$-concave functions and established  related functional affine isoperimetric inequality. In particular,  a functional affine isoperimetric inequality for log-concave
functions was given and can be viewed as an inverse logarithmic Sobolev  inequality for entropy. These inequalities further imply a version of the reverse Poincar\'{e} inequality \cite{ArtKlarSchuWer}. The main purpose of this paper is to develop a theory of Orlicz affine and geominimal surface areas for convex functions (hence also for log-concave functions) as well as  their related functional affine isoperimetric inequalities. The results in this paper bring more items into the above mentioned ``dictionary" and hopefully will provide powerful tools for many related fields, such as, analysis, (convex) geometry, and information theory.

This paper is organized as follows. In Section \ref{section 3}, we give a new formula for a general functional $L_p$ affine surface area for convex functions. Then, we generalize this idea and introduce the general Orlicz affine and geominimal surface areas for convex functions. We prove that these new concepts are  $SL_{\pm}(n)$-invariant.  We also prove some inequalities for these notions, such as functional affine isoperimetric inequalities, and generalizations of functional Blaschke-Santal\'o and inverse Santal\'o inequalities.
In Section \ref{section 4}, we propose the definition of Orlicz affine and geominimal surface areas for $s$-concave functions and prove corresponding functional inequalities, e.g., functional affine isoperimetric and Santal\'o type inequalities. In Section \ref{section:mixed}, we will briefly discuss results for multiple convex functions.  

 
\section{The general Orlicz affine and geominimal  surface areas for convex functions} \label{section 3} 

 Let $(\bbR^n, \|\cdot\|)$ be the Euclidean space with  $\|\cdot\|$ the Euclidean norm of  $\R^n$ induced by the usual inner product $\langle \cdot, \cdot \rangle$.  Let  $\C$ be the set of all convex functions $\psi: \R^n \to \R \cup \{+\infty\}.$  Throughout this paper, the interior of the convex domain of $\psi\in \C$ is always assumed to be nonempty. Denote by $\psi^*$ the classical Legendre transform of $\psi$, that is,  \begin{equation}
\label{Legendre}
\psi^* ( y ) = \sup_{x\in \bbR^n}  \bigl( \sca{x,y} - \psi (x) \bigr) .
\end{equation}   Clearly, $\psi ( x) + \psi^* (y ) \geq \sca{x,y}$ for all $x,y \in \R^n$. Equality holds if and only if
 $x$ is in the domain of $\psi$ and $y$ is in the subdifferential 
 of $\psi$ at $x$:  for almost all $x$ in the domain of $\psi\in \C$,   \begin{equation*} 
 \psi^* ( \nabla \psi (x) ) = \langle x ,\nabla \psi (x) \rangle - \psi (x),\end{equation*} where $\nabla \psi$ denotes the gradient of $\psi$. Rademacher's theorem (e.g., \cite{Rademacher}) asserts that $\nabla \psi$ exists almost everywhere.  For $\psi\in \C$,  $\nabla^2\psi$ denotes the Hessian matrix of $\psi$  in the sense of  Alexandrov, and it exists almost everywhere by a theorem of Alexandrov \cite{Alexandroff}  and Busemann-Feller  \cite{Buse-Feller}.  Let $$X_\psi=\Big\{x\in \bbR^n:\ \psi(x)<\infty,\ \mathrm{and}\  \nabla^2\psi(x)\ \mathrm{exists\  and\  is\ invertible}\Big\}.$$ For more background on convex functions, please see \cite{mccann, Rockafellar, SchneiderBook}.

  Denote by $f^\circ$  the polar dual of  the function $f: \bbR^n\rightarrow [0, \infty)$, which has the form:  $$f^\circ(x)=\inf _{y\in \R^n} \Big(\frac{e^{-\langle x, y\rangle}}{f(y)}\Big)\ \  \Leftrightarrow \ \  -\log f^\circ =(-\log f)^*. $$
A function $f: \bbR^n\rightarrow [0, \infty)$ is log-concave if $\log f$ is concave on the support of $f$. Note that $f^\circ$ is always a log-concave function no matter whether $f$ is log-concave or not.  A log-concave function $f$ is often written as $f=e^{-\psi}$ with  $\psi\in \C$, and clearly $f^\circ=e^{-\psi^*}$. Moreover, $(f^\circ)^\circ=f$ if $f$ is an upper semi-continuous log-concave function. The function $\gamma_n=e^{-\frac{\|\cdot\|^2}{2}}$ serves as  the ``unit Euclidean ball"  of log-concave functions as $(\gamma_n)^\circ=\gamma_n$, and its integral over $\bbR^n$ is equal to $(\sqrt{2\pi})^n$.  

   Throughout the paper, we always assume that the functions we consider, such as $F_1, F_2 \colon \R\to (0, \infty)$ and $\psi\in \C$, have enough smoothness and integrability to guarantee the integrals or other expression well-defined. For instance, we will need the following integrals to be finite $$0<\int_{X_{\psi}} F_1(\psi(x))\,dx<\infty \ \  \mathrm{and} \ \ \ 0< \int_{X_{\psi^*}} F_2(\psi^*(y)) dy<\infty.$$ 

 \subsection{A new formula for a general  $L_p$ affine surface area for convex functions}\label{section 3.1}  

 The following general  $L_p$ affine surface area for convex functions was proposed in   \cite{Caglar-6}.  
\begin{defn}
\label{def:log}
For  measurable functions  $F_1, F_2 \colon \R\to (0, \infty)$, $-n\neq p\in\R$, and $\psi\in \C$,  define
\begin{equation}\label{general}
as_{p, F_1, F_2}(\psi) = \int_{X_\psi} \big(F_1 (\psi(x))\big)^{\frac{n}{n+p}} \big(F_2(\langle x, \nabla \psi(x)\rangle - \psi(x)) \ \det \,  \nabla^2  \psi(x)\big)^{\frac{p}{n+p}} \,dx.
\end{equation} 
\end{defn}
  \noindent {\bf Remark.} Note that $as_{p, F_1, F_2}(\cdot)$ is called  the general $L_p$ affine surface area  because the above definition is just the definition of the functional $L_p$ affine surface area for log-concave functions if $F_1 (t)= F_2(t) = e^{-t}$. Hence, functions $F_1$ and $F_2$ act like parameters and provide the power to include much wider class of functions than the log-concave functions.

 Denote  by $\F$ the set of all positive Lebesgue integrable functions defined on $X_{\psi^*}$. That is, $g\in \F$ if $g(y)>0$ for all $y\in X_{\psi^*}$ and  $0<I(g, \psi^*)<\infty$ with   \begin{eqnarray}  I(g, \psi^*)=\int_{X_{\psi^*}} g(y) \, dy=\int_{X_\psi}   g(\nabla \psi(x)) \det \,  \nabla^2  \psi(x) \, dx, \label{ratio of g} \end{eqnarray} where the  second equality follows from Corollary 4.3 and Proposition A.1 in \cite{mccann}. In particular,  \begin{eqnarray*}
  I(F_2\circ\psi^*, \psi^*)&=&\int_{X_{\psi^*}} F_2(\psi^*(y)) dy\\ &=& 
  \int_{X_\psi} F_2(\psi^*(\nabla \psi (x))) \det \Hess \psi(x) dx \nonumber\\ &=& \int_{X_\psi} F_2(\langle x, \nabla \psi (x)\rangle-\psi(x)) \det \Hess \psi(x) dx. 
  \end{eqnarray*}     We often need  $I(F_1\circ\psi, \psi) = \int_{X_{\psi}} F_1(\psi(x))\,dx$.    
 
 For  measurable functions $F_1, F_2 : \R\rightarrow (0, \infty)$, let \begin{eqnarray} V_{p, F_1, F_2} (\psi, g) =\int_{X_\psi} \left(\frac{F_2( \langle x, \nabla \psi(x)\rangle - \psi(x)) }{g(\nabla \psi(x))}\right)^{p/n} F_1(\psi(x))\,dx. \label{mixed:lp-1}\end{eqnarray}

The following theorem gives a new formula for the above general functional $L_{p}$ affine surface area. 
\begin{theo}
  \label{equivalent:affine:surface:area}   Let $\psi$ be a $C^2$ strictly convex function.   For $p\geq 0$,  one has \begin{equation*} as_{p, F_1, F_2}(\psi) =\inf_{ g\in \F } \left\{ V_{p, F_1, F_2} (\psi, g)^{\frac{n}{n+p}}\
I(g, \psi^*)^{\frac{p}{n+p}}\right\}, 
\end{equation*} while for $-n\neq p<0$, the above formula holds with `` $\inf$" replaced by `` $\sup$".  \end{theo}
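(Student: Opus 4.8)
The plan is to read the claimed identity as a Hölder-duality (variational) formula: one inequality will hold for \emph{every} admissible $g$ by Hölder's inequality (in the direction $\le$ when $p\ge0$, and $\ge$ when $-n\neq p<0$), and the reverse inequality will come from plugging in the unique extremal $g^\star$. First I would pass to transparent notation on $X_\psi$: set
$$a(x)=F_1(\psi(x)),\qquad b(x)=F_2(\langle x,\nabla\psi(x)\rangle-\psi(x))\,\det\nabla^2\psi(x),\qquad c(x)=g(\nabla\psi(x))\,\det\nabla^2\psi(x).$$
Then \eqref{ratio of g} gives $I(g,\psi^*)=\int_{X_\psi}c\,dx$, while by definition $V_{p,F_1,F_2}(\psi,g)=\int_{X_\psi}(b/c)^{p/n}a\,dx$ and $as_{p,F_1,F_2}(\psi)=\int_{X_\psi}a^{\frac{n}{n+p}}b^{\frac{p}{n+p}}\,dx$. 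Since $\psi$ is $C^2$ and strictly convex, $\nabla\psi$ is injective and, restricted to $X_\psi$, a $C^1$ diffeomorphism onto its image; hence the correspondence $g\mapsto c$ identifies $\F$ with a class of positive functions on $X_\psi$, every $g\in\F$ producing an admissible $c$ and, conversely, every positive $c$ with $0<\int_{X_\psi}c<\infty$ arising from some $g\in\F$.

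For $p>0$ I would apply Hölder's inequality with the conjugate exponents $\tfrac{n+p}{n}$ and $\tfrac{n+p}{p}$ to the pointwise factorization
$$a^{\frac{n}{n+p}}b^{\frac{p}{n+p}}=\big[(b/c)^{p/n}a\big]^{\frac{n}{n+p}}\cdot c^{\frac{p}{n+p}},$$
which is legitimate because $\tfrac{p}{n}\cdot\tfrac{n}{n+p}=\tfrac{p}{n+p}$ and $\tfrac{n}{n+p}+\tfrac{p}{n+p}=1$. This gives $as_{p,F_1,F_2}(\psi)\le V_{p,F_1,F_2}(\psi,g)^{\frac{n}{n+p}}\,I(g,\psi^*)^{\frac{p}{n+p}}$ for every $g\in\F$, hence $as_{p,F_1,F_2}(\psi)\le\inf_{g\in\F}(\cdots)$; the case $p=0$ is trivial since there the right-hand side equals $\int_{X_\psi}a\,dx$ for all $g$. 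For $-n\neq p<0$ the exponents $\tfrac{n}{n+p}$ and $\tfrac{p}{n+p}$ are no longer both in $(0,1)$ — one is negative and the other exceeds $1$, with the sub-cases $-n<p<0$ and $p<-n$ differing only in which is which — so the same factorization combined with the reverse Hölder inequality yields the opposite bound $as_{p,F_1,F_2}(\psi)\ge V_{p,F_1,F_2}(\psi,g)^{\frac{n}{n+p}}I(g,\psi^*)^{\frac{p}{n+p}}$ for all $g\in\F$, hence $as_{p,F_1,F_2}(\psi)\ge\sup_{g\in\F}(\cdots)$.

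It remains to exhibit the extremal function. Equality in (reverse) Hölder forces $(b/c)^{p/n}a$ to be proportional to $c$, which solves to $c^\star:=a^{\frac{n}{n+p}}b^{\frac{p}{n+p}}$ (up to an irrelevant positive multiple). Pulling $c^\star$ back through the diffeomorphism $\nabla\psi$ defines a positive function $g^\star$ on $X_{\psi^*}$ with $I(g^\star,\psi^*)=\int_{X_\psi}c^\star\,dx=as_{p,F_1,F_2}(\psi)$, which is positive and finite by the standing integrability hypotheses (for $p\ge0$ the finiteness also follows from the Hölder bound just proved applied with $g=F_2\circ\psi^*$, giving $as_{p,F_1,F_2}(\psi)\le I(F_1\circ\psi,\psi)^{\frac{n}{n+p}}I(F_2\circ\psi^*,\psi^*)^{\frac{p}{n+p}}$); thus $g^\star\in\F$. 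Using $b/c^\star=(b/a)^{\frac{n}{n+p}}$ one checks $(b/c^\star)^{p/n}a=c^\star$, so $V_{p,F_1,F_2}(\psi,g^\star)=\int_{X_\psi}c^\star\,dx=as_{p,F_1,F_2}(\psi)$ as well, whence $V_{p,F_1,F_2}(\psi,g^\star)^{\frac{n}{n+p}}I(g^\star,\psi^*)^{\frac{p}{n+p}}=as_{p,F_1,F_2}(\psi)$. Together with the previous step this establishes the identity, with ``$\inf$'' for $p\ge0$ and ``$\sup$'' for $-n\neq p<0$.

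I expect the main obstacle to be the bookkeeping in the $p<0$ regime: confirming that the reverse Hölder inequality genuinely applies (positivity of the integrands is automatic, but one must verify the relevant integrals are finite, which is exactly where the blanket integrability convention is invoked) and that $g^\star\in\F$, i.e. $0<as_{p,F_1,F_2}(\psi)<\infty$, in the range where this does not follow from the explicit assumptions on $F_1,F_2$ alone. One should also be slightly careful that $\nabla\psi$ is a true diffeomorphism on $X_\psi$ (this is where $C^2$ and strict convexity enter) so that the pullback defining $g^\star$ and the change of variables \eqref{ratio of g} are valid. Beyond these points the argument is the routine Hölder/extremal-function computation familiar from $L_p$ geominimal surface area theory.
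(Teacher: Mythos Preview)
Your proposal is correct and follows essentially the same approach as the paper: one direction by H\"older's inequality (reverse H\"older for $-n\neq p<0$) applied to the factorization $a^{\frac{n}{n+p}}b^{\frac{p}{n+p}}=\big[(b/c)^{p/n}a\big]^{\frac{n}{n+p}}c^{\frac{p}{n+p}}$, and the other by exhibiting the extremal function $g^\star$ (your $c^\star=a^{\frac{n}{n+p}}b^{\frac{p}{n+p}}$ pulled back through $\nabla\psi$ is exactly the paper's $g_0$). The paper treats only $p>0$ in detail and leaves the remaining cases to the reader, so your discussion of the reverse-H\"older bookkeeping for $p<0$ is if anything slightly more explicit.
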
  

 \begin{proof}  We only prove the desired result for $p\in (0, \infty)$. The result for $-n\neq p<0$ follows along the same lines and for $p=0$ holds trivially.

  As $\psi$ is a $C^2$ strictly convex function, then $\det \nabla^2 \psi(x)>0$ on $X_{\psi}$  and $\nabla \psi: X_{\psi}\rightarrow X_{\psi^*}$ is smooth and bijective.  Consider the following function  $$g_0(\nabla \psi(x))=\big[F_2(\langle x, \nabla \psi(x)\rangle - \psi(x)) \big]^{\frac{p}{n+p}}  \bigg(\frac{F_1 (\psi(x))}{ \det \,  \nabla^2  \psi(x)}\bigg)^{\frac{n}{n+p}} \ \ \ \ \ 
 \mathrm{for}\  x\in X_{\psi}. $$ By formulas (\ref{general}), (\ref{ratio of g}) and (\ref{mixed:lp-1}), one can check $$as_{p, F_1, F_2}(\psi) = \big[V_{p, F_1, F_2} (\psi, g_0)\big]^{\frac{n}{n+p}} I(g_0, \psi^*)^{\frac{p}{n+p}}\geq \inf_{g\in \F} \  [V_{p, F_1, F_2} (\psi, g)]^{\frac{n}{n+p}} I(g, \psi^*)^{\frac{p}{n+p}}.$$   On the other hand, H\"{o}lder's inequality implies that for all  $g\in \F$,     
 \begin{eqnarray} as_{p, F_1, F_2}(\psi)   \!\! &=&\!\!  \int_{X_\psi}\!\! \bigg[F_1 (\psi(x))  \bigg(\frac{F_2(\langle x, \nabla \psi(x)\rangle - \psi(x)) }{g(\nabla \psi(x))}\bigg)^{\frac{p}{n}}\bigg]^{\frac{n}{n+p}} \big (g(\nabla \psi(x)) \det \,  \nabla^2  \psi(x)\big)^{\frac{p}{n+p}} dx \nonumber\\  \!\! &\leq &\!\!  
   \big[V_{p, F_1, F_2} (\psi, g)\big]^{\frac{n}{n+p}} I(g, \psi^*)^{\frac{p}{n+p}}. \label{affine:Holder} 
\end{eqnarray}  Taking the infimum over all  $g\in \F$, one gets, for $p\in (0, \infty)$,  
 \begin{equation*} as_{p, F_1, F_2}(\psi)   \leq \inf_{g\in \F} \  [V_{p, F_1, F_2} (\psi, g)]^{\frac{n}{n+p}} I(g, \psi^*)^{\frac{p}{n+p}},\end{equation*}  and hence the desired result holds.  \end{proof}
  
  \noindent {\bf Remark.} Let  $y=\nabla \psi(x)$, then    $\psi(x)+\psi^*(y)=\langle x, y\rangle$, $x=\nabla\psi^*(y)$ and  $\nabla ^2 \psi (x) \nabla ^2 \psi^*(y)=\mathrm{Id}$  (the identity matrix on $\bbR^n$).  
 These lead to the explicit expression of  $g_0$: $$ g_0(y)= \big[ F_1 (\langle y, \nabla \psi^*(y)\rangle -\psi^*(y)) \cdot F_2(\psi^*(y))^{\frac{p}{n}}  \cdot  \det \,  \nabla^2  \psi^*(y) \big]^{\frac{n}{n+p}}.$$  
  
  \subsection{The general Orlicz affine and geominimal surface areas for convex functions} \label{section 3.2}  

  Let $h: (0, \infty)\rightarrow (0, \infty)$ be a continuous function and $\psi\in \C$.    \begin{defn} \label{Orlicz-mixed-integral-log-concave-1} For  measurable functions $F_1, F_2 : \R\rightarrow (0, \infty)$ and  $g\in \F$,  define  the Orlicz  mixed integral of $\psi$ and $g$ with respect to $F_1$ and $F_2$ by  \begin{eqnarray*}V_{h, F_1, F_2} (\psi, g)= \int_{X_\psi} h \left(\frac{g(\nabla \psi(x))}{F_2( \langle x, \nabla \psi(x)\rangle - \psi(x)) }\right)  F_1(\psi(x))\,dx.\end{eqnarray*}   \end{defn} When $h(t)=t^{-p/n}$, one recovers formula (\ref{mixed:lp-1}).   Moreover, if $g=\tau \cdot (F_2\circ \psi^*)$ for some constant $\tau>0$,  \begin{equation}\label{special mixed integral-1} V_{h, F_1, F_2} \big( \psi,  \tau \cdot (F_2\circ \psi^*)\big)=h(\tau)\cdot I(F_1\circ \psi, \psi). \end{equation}

Denote by $GL(n)$ the set of all invertible linear maps on $\bbR^n$. For $T\in GL(n)$,  we use $\det (T)$  or $\det T$  for  the determinant of $T$.  Let $SL_{\pm}(n)$ denote the subset of $GL(n)$ which contains all $T\in GL(n)$ such that $\det (T)=\pm 1$. The inverse of $T$ is written by $T^{-1}$ and the transpose of $T$ is written as $T^t$. For convenience, the inverse of $T^t$ is denoted by $T^{-t}$.  

For $T\in SL_{\pm}(n)$ and $g\in \F$, by formula (\ref{Legendre}), one has, 
    \begin{eqnarray*} (\psi\circ T)^* ( y ) &=& \sup_{x\in \bbR^n}  \bigl( \sca{x,y} - (\psi\circ T) (x) \bigr)=  \sup_{x\in \bbR^n}  \bigl( \sca{Tx,T^{-t}y} - \psi(Tx)  \bigr) \\&=& \psi^*(T^{-t}y)=\big(\psi^*\circ T^{-t}\big)(y).
\end{eqnarray*}    Hence, $y\in X_{(\psi\circ T)^*}$  if and only if $y\in T^t (X_{\psi^*})$, which follows from the general fact $$\nabla^2  (\psi\circ T)(x) =T^t \big(\nabla^2  \psi(Tx)\big) T$$ for $T\in GL(n)$.  This implies $g\circ T^{-t}\in \mathcal{F}^+_{(\psi\circ T)^*}$ if $g\in \F$. Moreover, by $|\det(T^{-t})|=1$ and by  $y=T^{-t} z$, formula (\ref{ratio of g}) implies  \begin{eqnarray}  I(g, \psi^*) &=&  \int_{X_{\psi^*}} g(y) \, dy=  \int_{T^{t}(X_{\psi^*})} g(T^{-t}z) \, dz \nonumber \\ &
     =&  \int_{ X_{(\psi\circ T)^*}} (g\circ T^{-t})(z) \, dz=I\big(g\circ T^{-t},  (\psi\circ T)^*\big). \label{I-g-invariant}
    \end{eqnarray} Moreover, we can prove that  the above defined  Orlicz mixed integral is $SL_{\pm}(n)$-invariant.
\begin{lem}\label{affine-invariance:mixed-1} Let $F_1, F_2, \psi$ and $g$ be as in Definition \ref{Orlicz-mixed-integral-log-concave-1}. Then, for all $T\in SL_{\pm}(n)$, one has, $$V_{h, F_1, F_2} (\psi\circ T, g\circ T^{-t})=V_{h, F_1, F_2} (\psi, g).$$ 
\end{lem}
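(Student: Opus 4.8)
The plan is to prove the identity by a direct change of variables in the integral defining $V_{h, F_1, F_2}(\psi\circ T, g\circ T^{-t})$, reducing it to the integral defining $V_{h, F_1, F_2}(\psi, g)$. Write $\phi = \psi\circ T$ and $\tilde g = g\circ T^{-t}$. By the computation preceding the lemma, $\phi^* = \psi^*\circ T^{-t}$, and $\tilde g \in \mathcal{F}^+_{\phi^*}$ whenever $g\in\F$, so the right-hand integrand is well-defined. The key algebraic facts I would assemble first are: (i) $\nabla\phi(x) = T^t\,\nabla\psi(Tx)$, which follows from the chain rule; (ii) $\nabla^2\phi(x) = T^t\,\nabla^2\psi(Tx)\,T$, already recorded in the excerpt, whence $X_\phi = T^{-1}(X_\psi)$; and (iii) the Legendre-type identity $\langle x, \nabla\phi(x)\rangle - \phi(x) = \phi^*(\nabla\phi(x)) = \psi^*(T^{-t}\nabla\phi(x)) = \psi^*(\nabla\psi(Tx)) = \langle Tx, \nabla\psi(Tx)\rangle - \psi(Tx)$, using (i) and $\phi^*=\psi^*\circ T^{-t}$.

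Next I would substitute $u = Tx$ in the integral
\begin{equation*}
V_{h, F_1, F_2}(\phi, \tilde g) = \int_{X_\phi} h\!\left(\frac{\tilde g(\nabla\phi(x))}{F_2(\langle x, \nabla\phi(x)\rangle - \phi(x))}\right) F_1(\phi(x))\,dx.
\end{equation*}
Since $T\in SL_{\pm}(n)$ we have $|\det T| = 1$, so $dx = du$ and the domain $X_\phi = T^{-1}(X_\psi)$ maps onto $X_\psi$. Under this substitution: $F_1(\phi(x)) = F_1(\psi(u))$; by (iii) the argument of $F_2$ becomes $\langle u, \nabla\psi(u)\rangle - \psi(u)$; and by (i) together with $\tilde g = g\circ T^{-t}$ we get $\tilde g(\nabla\phi(x)) = g(T^{-t}\nabla\phi(x)) = g(T^{-t}T^t\nabla\psi(u)) = g(\nabla\psi(u))$. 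Hence the integrand transforms exactly into the integrand of $V_{h, F_1, F_2}(\psi, g)$, and the identity follows.

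I do not expect a genuine obstacle here; the statement is a formal consequence of the transformation rules for the gradient and Hessian under linear maps, combined with the unimodularity of $T$ and the Legendre-transform identity $(\psi\circ T)^* = \psi^*\circ T^{-t}$ established just above the lemma. The only point requiring a word of care is the measure-theoretic bookkeeping: one should note that the change of variables $u=Tx$ sends the set $X_\phi$ (where $\psi\circ T$ is finite with invertible Alexandrov Hessian) bijectively onto $X_\psi$, which is immediate from $\nabla^2(\psi\circ T)(x) = T^t\nabla^2\psi(Tx)T$ and the invertibility of $T$. This was in fact already used in the excerpt to identify $X_{(\psi\circ T)^*}$ with $T^t(X_{\psi^*})$, so the same reasoning applies on the domain side.
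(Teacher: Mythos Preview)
Your proposal is correct and follows essentially the same approach as the paper: a direct change of variables $u=Tx$ combined with the chain-rule identity $\nabla(\psi\circ T)(x)=T^t\nabla\psi(Tx)$ and $|\det T|=1$. The only cosmetic difference is that you justify $\langle x,\nabla\phi(x)\rangle-\phi(x)=\langle Tx,\nabla\psi(Tx)\rangle-\psi(Tx)$ via the Legendre identity $\phi^*=\psi^*\circ T^{-t}$, whereas the paper obtains it directly from $\langle x,T^t\nabla\psi(Tx)\rangle=\langle Tx,\nabla\psi(Tx)\rangle$; both are equivalent one-line observations.
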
  \begin{proof}   Let $T\in SL_{\pm}(n)$.  Recall that $\nabla (\psi\circ T)(x)=T^t \nabla \psi(Tx)$, which implies  $x\in X_{\psi\circ T}$ if and only if $Tx\in X_{\psi}$. Hence, by letting $y=Tx$, one has, \begin{eqnarray*}V_{h, F_1, F_2} (\psi\circ T, g\circ T^{-t})&=&  \int_{X_{\psi\circ T}} h\bigg(\frac{(g\circ T^{-t})\big(\nabla (\psi\circ T)(x)\big)}{F_2( \langle x, \nabla (\psi\circ T)(x)\rangle - (\psi\circ T)(x))}\bigg)  F_1\big((\psi\circ T)(x)\big)\,dx\\ &=& \int_{X_{\psi\circ T}} h\left(\frac{g(\nabla \psi(Tx))}{F_2( \langle x,T^t \nabla \psi(Tx)\rangle -  \psi(Tx))}\right)  F_1(\psi(Tx))\,dx\\  &=& \int_{X_{\psi}} h\left(\frac{g(\nabla \psi(y))}{F_2( \langle y, \nabla \psi(y)\rangle -  \psi(y))}\right)  F_1(\psi(y))\,dy\\&=& V_{h, F_1, F_2} (\psi, g).\end{eqnarray*}     \end{proof}

The following function classes were defined in \cite{Ye2014a2} and will play fundamental roles in this paper. Let \begin{eqnarray*} \Phi &=&\{h:   \mbox{$h$ is either a constant or a strictly convex function}\}; \\   \Psi&=&\{h: h\ \mbox{is either a constant or an increasing strictly concave function}\}.\end{eqnarray*}  Throughout this paper,  $\M$ refers to the subset of $\F$ which contains all log-concave functions. Note that log-concave functions are analogous to convex bodies in geometry; and hence $\M$ is used to define the general  Orlicz geominimal surface area of convex functions (although $\psi$ or $F_1\circ \psi$ or $F_2\circ\psi^*$ may not be log-concave).  Motivated by Theorem \ref{equivalent:affine:surface:area}, the general Orlicz affine and geominimal surface areas of  $\psi$  could be defined as follows.  
 
  \begin{defn} \label{Orlicz affine surface}   For $h\in \Phi$,   the general  Orlicz affine surface area of  $\psi\in \C$ is defined by \begin{eqnarray*}
  \OrliczA(\psi)=\inf_{g\in \F} \bigg\{V_{h, F_1, F_2} \bigg(\psi, \frac{(\sqrt{2\pi})^n\cdot g}{I(g, \psi^*)}\bigg)\bigg\}; 
  \end{eqnarray*} and the general  Orlicz geominimal surface area of  $\psi\in \C$ is defined by \begin{eqnarray*}
  \OrliczG(\psi)=\inf_{g\in \M} \bigg\{V_{h, F_1, F_2} \bigg(\psi, \frac{(\sqrt{2\pi})^n\cdot g}{I(g, \psi^*)}\bigg)\bigg\}. 
  \end{eqnarray*} 
   When $h\in \Psi$, $\OrliczA(\psi)$ and $ \OrliczG(\psi)$  are defined as above but with `` $\inf$" replaced by `` $\sup$". 
\end{defn} 

\noindent {\bf Remark.} The above definitions could be extended to more general cases with $\F$ and $\M$ replaced by any subset of $\F$; and the properties would be similar to those for $ \OrliczA(\psi)$ and $ \OrliczG(\psi)$ which are the most important cases.  In fact, one can let $I(g, \psi^*)=(\sqrt{2\pi})^n$ in Definition \ref{Orlicz affine surface}, for instance, if $h\in \Phi$, 
$$\OrliczA(\psi)=\inf \left\{ V_{h, F_1, F_2} (\psi, g): g\in \F \ \mathrm{with}\ I(g, \psi^*)=(\sqrt{2\pi})^n \right\}.$$   It can be also easily checked that  $\OrliczA(\psi) \leq \OrliczG(\psi)$  for $h\in \Phi$ and $\OrliczA(\psi) \geq  \OrliczG(\psi)$ for $h\in \Psi$.  Moreover,  $\OrliczA(\psi) =\OrliczG(\psi) =  I(F_1\circ \psi, \psi)$ if $h(t)=1$.

 If $F_1(t)=F_2(t)=e^{-t}$ and  $\psi$ is  a convex function, then  $f=F_1\circ \psi=e^{-\psi}$ and $F_2\circ\psi^*=e^{-\psi^*}=f^{\circ}$ (the polar dual of $f$) are log-concave functions. Therefore, one can define the Orlicz affine surface area of the log-concave function $f=e^{-\psi}$ by $as_{h}^{orlicz}(f)=as_{h, e^{-t}, e^{-t}}^{orlicz}(\psi).$  This serves as a non-homogeneous extension of the $L_p$ affine surface area of log-concave functions \cite{Caglar-6, CaglarWerner}.  Similarly,  $G_{h}^{orlicz}(f)=G_{h, e^{-t}, e^{-t}}^{orlicz}(\psi)$ defines the Orlicz geominimal surface area of $f$, which is new to the literature.

 The following theorem states that the general  Orlicz affine and geominimal surface areas of  $\psi$ are $SL_{\pm}(n)$-invariant.  \begin{theo} \label{Affine-invariance--1} Let $\psi\in \C$. For $T\in SL_{\pm}(n)$ and $h\in \Phi\cup\Psi$, one has $$\OrliczA(\psi\circ T)=\OrliczA(\psi) \ \ \mathrm{and} \ \ \OrliczG(\psi\circ T)=\OrliczG(\psi).$$  In particular, $as_{h}^{orlicz}(f)$ and  $G_{h}^{orlicz}(f)$ are $SL_{\pm}(n)$-invariant.    \end{theo}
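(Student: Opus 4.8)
The plan is to reduce the $SL_\pm(n)$-invariance of the affine and geominimal surface areas to the $SL_\pm(n)$-invariance of the Orlicz mixed integral established in Lemma \ref{affine-invariance:mixed-1}, via a change of variables in the infimum (or supremum) over $g$. First I would fix $T\in SL_\pm(n)$ and $h\in \Phi\cup\Psi$, and treat the case $h\in\Phi$ (the case $h\in\Psi$ being identical with $\inf$ replaced by $\sup$). Recall from the computations preceding Lemma \ref{affine-invariance:mixed-1} that $(\psi\circ T)^* = \psi^*\circ T^{-t}$, that $g\mapsto g\circ T^{-t}$ is a bijection from $\F$ onto $\mathcal{F}^+_{(\psi\circ T)^*}$ (with inverse $\tilde g\mapsto \tilde g\circ T^t$), and that $I(g,\psi^*)=I(g\circ T^{-t},(\psi\circ T)^*)$ by formula (\ref{I-g-invariant}).

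The key step is then the following: for any $g\in\F$, applying Lemma \ref{affine-invariance:mixed-1} with the function $\dfrac{(\sqrt{2\pi})^n\cdot g}{I(g,\psi^*)}$ in place of $g$ gives
\begin{equation*}
V_{h, F_1, F_2}\!\left(\psi\circ T,\ \frac{(\sqrt{2\pi})^n\cdot (g\circ T^{-t})}{I(g,\psi^*)}\right)
= V_{h, F_1, F_2}\!\left(\psi,\ \frac{(\sqrt{2\pi})^n\cdot g}{I(g,\psi^*)}\right),
\end{equation*}
and using $I(g,\psi^*)=I(g\circ T^{-t},(\psi\circ T)^*)$ the left-hand side is exactly the integrand in the definition of $\OrliczA(\psi\circ T)$ evaluated at $g\circ T^{-t}\in\mathcal{F}^+_{(\psi\circ T)^*}$. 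Since $g\mapsto g\circ T^{-t}$ runs over all of $\mathcal{F}^+_{(\psi\circ T)^*}$ as $g$ runs over $\F$, taking the infimum over $g\in\F$ on both sides yields $\OrliczA(\psi\circ T)=\OrliczA(\psi)$. For $\OrliczG$ one repeats the argument with $\M$ in place of $\F$, which only requires noting in addition that $g$ is log-concave if and only if $g\circ T^{-t}$ is log-concave (composition with a linear map preserves log-concavity and its inverse), so $g\mapsto g\circ T^{-t}$ restricts to a bijection of $\M$ onto its analogue for $(\psi\circ T)^*$, and also that scaling by the positive constant $(\sqrt{2\pi})^n/I(g,\psi^*)$ preserves log-concavity. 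The final sentence, that $as_h^{orlicz}(f)$ and $G_h^{orlicz}(f)$ are $SL_\pm(n)$-invariant, follows immediately: taking $F_1(t)=F_2(t)=e^{-t}$ and writing $f=e^{-\psi}$, the linear image of the log-concave function $f$ corresponds to replacing $\psi$ by $\psi\circ T^{-1}$ (with $\det(T^{-1})=\pm1$), so the invariance is a direct specialization of what was just proved.

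I do not anticipate a serious obstacle here; the only points requiring care are bookkeeping ones — verifying that $g\mapsto g\circ T^{-t}$ is genuinely a bijection between the relevant function classes (both for $\F$ and for its log-concave subset $\M$) so that the infimum/supremum is taken over the same set after the substitution, and checking that the normalizing factor $(\sqrt{2\pi})^n/I(g,\psi^*)$ transforms correctly, which is precisely what (\ref{I-g-invariant}) provides. The mild subtlety worth a sentence is that $\M$ for $\psi^*$ and $\M$ for $(\psi\circ T)^*$ are a priori different sets (they consist of integrable positive functions on $X_{\psi^*}$ versus on $X_{(\psi\circ T)^*}=T^t(X_{\psi^*})$), but the map $g\mapsto g\circ T^{-t}$ identifies them compatibly with log-concavity, so no essential difficulty arises.
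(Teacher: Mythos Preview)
Your proposal is correct and follows essentially the same route as the paper: both arguments reduce to Lemma \ref{affine-invariance:mixed-1} and formula (\ref{I-g-invariant}), then use the bijection $g\mapsto g\circ T^{-t}$ between $\F$ and $\mathcal{F}^+_{(\psi\circ T)^*}$ to transfer the infimum (or supremum). The only cosmetic difference is that the paper works with the constrained formulation $I(g,\psi^*)=(\sqrt{2\pi})^n$ from the remark after Definition \ref{Orlicz affine surface}, whereas you work directly with the normalized argument $\tfrac{(\sqrt{2\pi})^n g}{I(g,\psi^*)}$; these are equivalent, and your extra remarks on the log-concave case make explicit what the paper leaves implicit.
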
  
 
 \begin{proof}  We only prove the case for   $\OrliczA(\psi)$ and the case for $\OrliczG(\psi)$  follows along the same lines. The desired result follows from Lemma \ref{affine-invariance:mixed-1}, formula (\ref{I-g-invariant}) and the remark after Definition \ref{Orlicz affine surface}: for $h\in \Phi$,  
\begin{eqnarray*} \OrliczA(\psi) \!\!&=&\!\!\! \inf \left\{ V_{h, F_1, F_2}(\psi, g): g\in \F \ \mathrm{with} \ I(g, \psi^*)=(\sqrt{2\pi})^n \right\} \\ \!\!&=&\!\!\! \inf \left\{ V_{h, F_1, F_2}(\psi\circ T, g\circ T^{-t}): g\in \F \ \mathrm{with} \ I(g, \psi^*)=(\sqrt{2\pi})^n  \right\} \\\!\! &=&\!\!\! \inf \big\{ V_{h, F_1, F_2}(\psi\circ T, g\circ T^{-t}): g\circ T^{-t}\in \mathcal{F}^+_{(\psi\circ T)^*}  \ \mathrm{with} \ I\big(g\circ T^{-t}, (\psi\circ T)^*\big)=(\sqrt{2\pi})^n  \big\} \\\!\!&=&\!\!\! \OrliczA\big(\psi\circ T\big). \end{eqnarray*}  Replacing ``$\inf$" by ``$\sup$", one gets  the $SL_{\pm}(n)$-invariance of $\OrliczA(\psi)$ for $h\in \Psi$.  
\end{proof}

Let $c>0$ be a constant and $F:\bbR\rightarrow (0, \infty)$ be a measurable function. For convenience, let    \begin{eqnarray*}    I(F, c)= \int_{\bbR^n} F\Big(\frac{c^2\|x\| ^2}{2}\Big)\,dx.\end{eqnarray*}   It can be checked that 
 \begin{eqnarray}\label{variable-change-1} I(F, c)=c^{-n}\cdot  I(F, 1). \end{eqnarray} 
 
 The following corollary provides the precise values of $as_{h, aF, bF}^{orlicz}\big(\frac{c^2\|\cdot\| ^2}{2} \big)$ and $G_{h, aF, bF}^{orlicz}\big(\frac{c^2\|\cdot\| ^2}{2} \big)$ with constants $a, b>0$.  When $a=b=1$ and $F(t)=e^{-t}$, one gets $$as^{orlicz}_{h}\big(\gamma_n\circ c\big)= G^{orlicz}_{h}\big(\gamma_n\circ c\big) =  c^{-n} \cdot h (c^{-n})\cdot (\sqrt{2\pi})^n,$$ where $(\gamma_n\circ c)(x)=\gamma_n(cx)$ for $x\in \bbR^n$ and $\gamma_n(x)=e^{-\frac{\|x\|^2}{2}}$.  Note that $\gamma_n^\circ=\gamma_n$, and hence $\gamma_n$ serves as the ``Euclidean unit ball" in the geometry of log-concave functions. 
  
  
 \begin{cor} \label{equality of ball-1}  Let $a, b, c>0$ be constants and  $F \colon \R\to (0, \infty)$ be a measurable function such that  $0< I\big(F, 1 \big)<\infty$. 
Then, for $h\in \Phi\cup\Psi$, $$ as_{h, aF, bF}^{orlicz}\Big(\frac{c^2\|\cdot\| ^2}{2} \Big) = a \cdot I\big(F, c \big)\cdot h \Big(\frac{(\sqrt{2\pi})^n}{c^{2n}\cdot b \cdot   I(F, c)}\Big) .$$  The same formula holds for $G_{h, aF, bF}^{orlicz}\big(\frac{c^2\|\cdot\| ^2}{2} \big)$ if the function $F\big(\frac{\|\cdot\| ^2}{2}\big)$ is log-concave.  \end{cor}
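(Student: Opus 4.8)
The plan is to reduce the whole statement to one application of Jensen's inequality. Write $\psi := \frac{c^2\|\cdot\|^2}{2}$; this is $C^2$ and strictly convex, with $\psi^*(y) = \frac{\|y\|^2}{2c^2}$, $\nabla\psi(x) = c^2x$, $\Hess\psi(x) = c^2\,\mathrm{Id}$, hence $\det\Hess\psi\equiv c^{2n}$ and $X_\psi = X_{\psi^*} = \bbR^n$. In particular $\langle x,\nabla\psi(x)\rangle - \psi(x) = \psi(x) = \frac{c^2\|x\|^2}{2}$, so with $F_1 = aF$, $F_2 = bF$ the mixed integral of Definition~\ref{Orlicz-mixed-integral-log-concave-1} reads
$$V_{h, aF, bF}(\psi, g) = a\int_{\bbR^n} h\!\left(\frac{g(c^2 x)}{b\,F\!\left(\frac{c^2\|x\|^2}{2}\right)}\right) F\!\left(\frac{c^2\|x\|^2}{2}\right)dx,$$
and $I(F_1\circ\psi,\psi) = a\,I(F,c)$, which is finite and positive by the hypothesis on $F$. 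I would also record, via a change of variables and (\ref{variable-change-1}), that $I(F_2\circ\psi^*,\psi^*) = b\,I(F,1/c) = b\,c^{2n}\,I(F,c)$.

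By the remark after Definition~\ref{Orlicz affine surface} it suffices to work with competitors $g\in\F$ normalized by $I(g,\psi^*) = (\sqrt{2\pi})^n$; for any such $g$, formula~(\ref{ratio of g}) together with $\det\Hess\psi\equiv c^{2n}$ gives $\int_{\bbR^n} g(c^2x)\,dx = c^{-2n}(\sqrt{2\pi})^n$. The crucial observation is that along $\psi$ the ratio $F_1/F_2 \equiv a/b$ is constant, so applying Jensen's inequality with the probability measure $d\mu = \frac{F(c^2\|x\|^2/2)}{I(F,c)}\,dx$ yields, for $h\in\Phi$ convex,
$$V_{h, aF, bF}(\psi, g) = a\,I(F,c)\int_{\bbR^n} h\!\left(\frac{g(c^2x)}{b F(c^2\|x\|^2/2)}\right)d\mu \ \ge\ a\,I(F,c)\, h\!\left(\frac{1}{b\,I(F,c)}\int_{\bbR^n} g(c^2x)\,dx\right),$$
and the argument of $h$ on the right equals $\frac{(\sqrt{2\pi})^n}{c^{2n} b\, I(F,c)}$. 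Thus $V_{h,aF,bF}(\psi,g) \ge a\,I(F,c)\,h\!\left(\frac{(\sqrt{2\pi})^n}{c^{2n} b\, I(F,c)}\right)$; the constant case of $h$ gives equality trivially, and for $h\in\Psi$ concave Jensen reverses the inequality. Equality is attained at $g_0 = \tau\,(F_2\circ\psi^*)$ with $\tau = \frac{(\sqrt{2\pi})^n}{b\,c^{2n}\,I(F,c)}$: by the computation of $I(F_2\circ\psi^*,\psi^*)$ this $g_0$ is normalized, and (\ref{special mixed integral-1}) gives $V_{h,aF,bF}(\psi,g_0) = h(\tau)\,I(F_1\circ\psi,\psi) = a\,I(F,c)\,h(\tau)$, exactly the claimed value. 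Hence the infimum (for $h\in\Phi$), respectively the supremum (for $h\in\Psi$), defining $as_{h,aF,bF}^{orlicz}\big(\frac{c^2\|\cdot\|^2}{2}\big)$ equals that value.

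For the geominimal surface area the Jensen bound above is unchanged, since it never uses log-concavity of the competitor, so the same one-sided inequality holds over all $g\in\M$ with $I(g,\psi^*) = (\sqrt{2\pi})^n$. It remains to check that $g_0$ itself lies in $\M$: since $\psi^*(y) = \frac{\|y\|^2}{2c^2}$, $g_0$ is a positive multiple of $y\mapsto F\!\left(\frac{\|y/c\|^2}{2}\right)$, a linear rescaling of $F\!\left(\frac{\|\cdot\|^2}{2}\right)$; as log-concavity is preserved under linear substitution and under multiplication by positive constants, $g_0\in\M$ precisely when $F\!\left(\frac{\|\cdot\|^2}{2}\right)$ is log-concave, and under that hypothesis the extremum over $\M$ is again attained at $g_0$, giving the stated formula for $G_{h,aF,bF}^{orlicz}$. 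I expect the only delicate points to be the bookkeeping of the factors $c^{\pm n}, c^{\pm 2n}$ coming from (\ref{ratio of g}) and (\ref{variable-change-1}), and verifying the hypotheses of Jensen's inequality (positive integrand, barycenter in $(0,\infty)$, reference measure a probability measure); both become routine once $F_1/F_2$ is seen to be constant along $\psi$.
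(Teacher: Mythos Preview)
Your proof is correct and follows essentially the same approach as the paper: both establish the one-sided bound via Jensen's inequality applied to the probability measure $\frac{F(c^2\|x\|^2/2)}{I(F,c)}\,dx$, and both achieve the matching bound by plugging in the competitor $g_0=\tau\,(F_2\circ\psi^*)$ (which the paper writes as $\frac{(\sqrt{2\pi})^n F(\|\cdot\|^2/(2c^2))}{I(F,c^{-1})}$, the same function). Your treatment of the geominimal case, checking that $g_0\in\M$ exactly under the stated log-concavity hypothesis, also mirrors the paper.
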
   \begin{proof} We only prove the case $h\in\Phi$ and the proof for the case $h\in\Psi$ follows along the same line. Note that $X_{\frac{c^2\|\cdot\| ^2}{2}}=\bbR^n$,  $\nabla \frac{c^2\|x\| ^2}{2}=c^2 x$ and $\langle x, \nabla\frac{c^2\|x\| ^2}{2}\rangle -\frac{c^2\|x\| ^2}{2}= \frac{c^2 \|x\| ^2}{2}$. Applying Jensen's inequality to the convex function $h$, one has, for all $g: \bbR^n\rightarrow (0, \infty)$ with $\int_{\bbR^n} g(y)\,dy>0,$   \begin{eqnarray*}V_{h, aF, bF} \Big(\frac{c^2\|\cdot\| ^2}{2}, g\Big)  &=& a\cdot I\big(F, c\big)\cdot \int_{\R^n} h\bigg(\frac{g(c^2x)}{b\cdot F\big(\frac{c^2\|x\| ^2}{2}\big)}\bigg)  \frac{F\big(\frac{c^2\|x\| ^2}{2}\big)}{I\big(F, c \big)}\,dx\\ &\geq& a\cdot I\big(F, c \big) \cdot  h\bigg(\int_{\R^n} \frac{g(c^2x)}{b\cdot I\big(F, c \big)}\,dx\bigg)\\ &=&a\cdot I\big(F, c \big) \cdot  h\bigg( \frac{1}{c^{2n} \cdot b\cdot I\big(F, c \big)} \int _{\bbR^n} g(y)\,dy\bigg).\end{eqnarray*} This leads to, for $h\in \Phi$, \begin{eqnarray}
 as_{h, aF, bF}^{orlicz}\Big(\!\frac{c^2 \|\cdot\| ^2\!}{2} \Big)\!\!\!\!&=&\!\!\! \inf \left\{\!V_{h, aF, bF} \Big(\!\frac{c^2\|\cdot\| ^2}{2}, g\!\Big)\!\!:  g \ \mathrm{is\ a\ positive\ function\ on} \ \bbR^n\ \mathrm{and}\  \int_{\bbR^n}\! g(y)\,dy=(\sqrt{2\pi})^n\! \right\} \nonumber \\ 
 \!\!\!\! &\geq&\!\!\! a \cdot  I\big(F, c \big) \cdot  h\bigg(\frac{(\sqrt{2\pi})^n}{c^{2n}\cdot b \cdot I\big(F, c \big)}  \bigg).\nonumber \end{eqnarray} 
 On the other hand,  by formulas (\ref{special mixed integral-1}) and (\ref{variable-change-1}),  and Definitions \ref{Orlicz-mixed-integral-log-concave-1} and \ref{Orlicz affine surface}, one can check \begin{eqnarray}
  as_{h, aF, bF}^{orlicz}\Big(\frac{c^2 \|\cdot\| ^2}{2} \Big) \leq   V_{h, aF, bF}\Big(\frac{c^2 \|\cdot\| ^2}{2} , \frac{ (\sqrt{2\pi})^n \cdot F\big(\frac{\|\cdot \| ^2}{2c^{2}}\big)}{I(F, c^{-1})}\Big)  = a\cdot I\big(F, c \big) \cdot  h\bigg( \frac{(\sqrt{2\pi})^n}{c^{2n}\cdot b \cdot I\big(F, c \big)}  \bigg),\label{eq-eq-2} \end{eqnarray}  and the desired result follows. 
  
 The proof for  $G_{h, aF, bF}^{orlicz}\big(\frac{c^2\|\cdot\| ^2}{2} \big)$ follows along the same lines. The additional assumption that $F\big(\frac{\|\cdot\| ^2}{2}\big)$  is log-concave  is needed to obtain inequality (\ref{eq-eq-2}).        \end{proof}

\subsection{Inequalities}\label{section 3.3}  

   In this subsection, we prove some inequalities for the general Orlicz affine and geominimal surface areas of convex functions.  Hereafter, we always assume that $$I(F_1\circ\psi, \psi)\in (0, \infty) \ \ \ \mathrm{and} \ \ \  I(F_2\circ\psi^*, \psi^*)\in (0, \infty).$$ In particular, when $F_1(t)=F_2(t)=e^{-t}$,   we assume that  $$I(f)=I(e^{-t}\circ \psi, \psi)\in (0, \infty) \ \ \mathrm{and} \ \ I(f^\circ)=I(e^{-t}\circ \psi^*, \psi^*)\in (0, \infty),$$ where $f=e^{- \psi}$ and $f^\circ=e^{-\psi^*}$ are log-concave functions.

The following proposition is needed in order to prove some inequalities for the general Orlicz affine and geominimal surface areas of convex functions.   

 \begin{prop}\label{bounded by volume product} Let $\psi\in \C$. Then, for $h\in \Phi$, \begin{eqnarray*}   \OrliczA(\psi)    \leq I(F_1\circ \psi, \psi) \cdot h\Big(\frac{(\sqrt{2\pi})^n}{I(F_2\circ\psi^*, \psi^*)}\Big), \end{eqnarray*} and if in addition  $F_2\circ \psi^*$ is log-concave,  \begin{eqnarray*}   \OrliczA(\psi)    \leq  \OrliczG(\psi)    \leq I(F_1\circ \psi, \psi) \cdot h\Big(\frac{(\sqrt{2\pi})^n}{I(F_2\circ\psi^*, \psi^*)}\Big).  \end{eqnarray*}  In particular,  for $h\in \Phi$ and $f=e^{- \psi}$, \begin{eqnarray*} as_{h}^{orlicz} (f)\leq G_{h}^{orlicz} (f)  \leq I(f) \cdot h\bigg(\frac{(\sqrt{2\pi})^n}{I(f^\circ)}\bigg).\end{eqnarray*}  
The above inequalities hold  for $h\in \Psi$  with ``$\leq$" replaced by ``$\geq$".  \end{prop}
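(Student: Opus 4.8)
The plan is to prove every inequality in the statement at once, simply by testing the infimum (respectively supremum) in Definition \ref{Orlicz affine surface} against the single explicit competitor $g=F_2\circ\psi^*$. Under the standing assumptions of this subsection, $F_2\circ\psi^*$ is strictly positive on $X_{\psi^*}$ and satisfies $0<I(F_2\circ\psi^*,\psi^*)<\infty$, so $F_2\circ\psi^*\in\F$; and if moreover $F_2\circ\psi^*$ is log-concave, then $F_2\circ\psi^*\in\M$. Inserting this $g$ into the normalized argument appearing in Definition \ref{Orlicz affine surface} gives the scalar multiple
\[
\frac{(\sqrt{2\pi})^n\cdot(F_2\circ\psi^*)}{I(F_2\circ\psi^*,\psi^*)}=\tau\cdot(F_2\circ\psi^*),\qquad \tau:=\frac{(\sqrt{2\pi})^n}{I(F_2\circ\psi^*,\psi^*)},
\]
and formula (\ref{special mixed integral-1}) evaluates the corresponding Orlicz mixed integral exactly:
\[
V_{h,F_1,F_2}\!\left(\psi,\ \frac{(\sqrt{2\pi})^n\cdot(F_2\circ\psi^*)}{I(F_2\circ\psi^*,\psi^*)}\right)=h(\tau)\cdot I(F_1\circ\psi,\psi)=I(F_1\circ\psi,\psi)\cdot h\!\left(\frac{(\sqrt{2\pi})^n}{I(F_2\circ\psi^*,\psi^*)}\right).
\]

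For $h\in\Phi$, the quantity $\OrliczA(\psi)$ is by definition the infimum over all $g\in\F$ of $V_{h,F_1,F_2}\big(\psi,(\sqrt{2\pi})^n g/I(g,\psi^*)\big)$, so specializing to $g=F_2\circ\psi^*$ yields the first claimed bound $\OrliczA(\psi)\le I(F_1\circ\psi,\psi)\cdot h\big((\sqrt{2\pi})^n/I(F_2\circ\psi^*,\psi^*)\big)$. If in addition $F_2\circ\psi^*$ is log-concave, the same competitor is admissible in the infimum over $\M$ defining $\OrliczG(\psi)$, which gives the identical upper bound for $\OrliczG(\psi)$; and since $\M\subseteq\F$, the infimum defining $\OrliczA(\psi)$ runs over a larger feasible set, hence $\OrliczA(\psi)\le\OrliczG(\psi)$ (this is already recorded in the remark following Definition \ref{Orlicz affine surface}). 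For $h\in\Psi$ the defining infima become suprema, so the very same computation with the very same competitor reverses all the inequalities, producing the stated ``$\ge$'' versions together with $\OrliczA(\psi)\ge\OrliczG(\psi)$.

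The log-concave case is just the specialization $F_1(t)=F_2(t)=e^{-t}$: then $I(F_1\circ\psi,\psi)=I(f)$ and $I(F_2\circ\psi^*,\psi^*)=I(f^\circ)$ with $f=e^{-\psi}$, while $F_2\circ\psi^*=e^{-\psi^*}=f^\circ$ is automatically log-concave, so the extra hypothesis needed for the $G_h^{orlicz}$ bound is free; this gives $as_h^{orlicz}(f)\le G_h^{orlicz}(f)\le I(f)\cdot h\big((\sqrt{2\pi})^n/I(f^\circ)\big)$, and its reversal for $h\in\Psi$. There is essentially no obstacle in this argument: the only points to verify are the membership $F_2\circ\psi^*\in\F$ (immediate from the integrability assumptions) and, for the $\OrliczG$ bound, $F_2\circ\psi^*\in\M$ (immediate from the log-concavity hypothesis), together with the exact evaluation (\ref{special mixed integral-1}) of $V_{h,F_1,F_2}$ at a scalar multiple of $F_2\circ\psi^*$; everything else is monotonicity of $\inf$/$\sup$ under enlarging the feasible set.
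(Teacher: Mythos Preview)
Your proof is correct and follows essentially the same approach as the paper: both test the infimum (or supremum) in Definition \ref{Orlicz affine surface} against the explicit competitor $g=F_2\circ\psi^*$, invoke formula (\ref{special mixed integral-1}) to evaluate the resulting mixed integral, and note that the log-concavity hypothesis is exactly what is needed for this competitor to lie in $\M$. Your write-up is more detailed, but the argument is identical.
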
  
 
 \begin{proof} Formula (\ref{special mixed integral-1}) and Definition \ref{Orlicz affine surface} imply that for $h\in \Phi$,  \begin{eqnarray*}   \OrliczA(\psi)     \leq   V_{h, F_1, F_2}\bigg(\psi, \frac{(\sqrt{2\pi})^n\cdot (F_2\circ \psi^*)}{I(F_2\circ \psi^*, \psi^*)}\bigg)  = I(F_1\circ \psi, \psi) \cdot h\Big(\frac{(\sqrt{2\pi})^n}{I(F_2\circ\psi^*, \psi^*)}\Big);\end{eqnarray*}  while for $h\in \Psi$, similar inequality holds with ``$\leq$" replaced by ``$\geq$".  
 
 The desired result for $\OrliczG(\psi)$ follows along the same lines  if in addition $F_2\circ \psi^*\in \M$.  \end{proof} 

For measurable functions $F_1, F_2: \bbR\rightarrow (0, \infty)$, define the  decreasing function  $\breve{F}:\R \to (0, \infty)$ by 
\begin{equation*}
\breve{F}(t)= \sup_{\frac{t_1+ t_2}{2} \ge t} \sqrt{F_1(t_1) F_2 ( t_2 )}.   
\end{equation*}  It can be checked that  $\breve{F}=F_1=F_2$ if $F_1=F_2$ is a log-concave and  decreasing function.  Let $$I(\breve{F}, c)=\int_{\bbR^n} \breve{F}\Big(\frac{c^2\|x\| ^2}{2}\Big)\,dx.$$  

 For $z\in \bbR^n$ and  for $\psi\in \C$, let $\psi_z(x)=\psi(x+z)$ and $\psi_z^*=(\psi_z)^*$.  It was proved in \cite{Caglar-6} (as a direct consequence of the functional Blaschke-Santal\'o inequality  \cite{FradeliziMeyer2007, Lehec2009b}) that  there exists $z_0\in\R^n$ such that 
\begin{equation*}
I(F_1 \circ \psi, \psi) \cdot I(F_2 \circ \psi_{z_0}^*, \psi_{z_0}^*)
\leq \big[I\big(\breve{F}, 1\big) \big]^2. \end{equation*}  
 Let $\C_0$ be the set of convex functions in $\C$ with $z_0=0$.  Therefore, for all $\psi\in \C_0$, one has  \begin{equation}
\label{eq:BSfunctional}
I(F_1 \circ \psi, \psi) \cdot I(F_2 \circ \psi^*, \psi^*)
\leq \big[I\big(\breve{F}, 1\big) \big]^2.
\end{equation}  If in addition $\breve{F}$ is strictly decreasing and $I(F_1 \circ \psi, \psi) \ne 0$ (or $I(F_2 \circ \psi^*, \psi^*)\neq 0$), equality holds in inequality (\ref{eq:BSfunctional}) if and only if there exist $ b \in (0, \infty)$, $a \in \R$ and a positive definite matrix $A$ such that for every $x\in\R^n$ and $t\ge 0$, 
\begin{equation}
\psi(x)=\langle Ax,x\rangle+a,\quad F_1(t+a)=b\breve{F}(t) \quad{\rm and}\quad bF_2(t-a)= \breve{F}(t). \label{equality:character}
\end{equation}
In particular, for log-concave function $f=e^{-\psi}$, inequality (\ref{eq:BSfunctional}) becomes the classical functional Blaschke-Santal\'{o} inequality  \cite{ArtKlarMil, KBallthesis, FradeliziMeyer2007,Lehec2009}:    
\begin{equation*} 
I(f) \cdot I(f^\circ)
\leq  (2\pi)^n,
\end{equation*} with equality  if and only if there exist $a \in \R$ and a positive definite matrix $A$ such that \begin{equation}
\psi (x)=\langle Ax,x\rangle+a, \ \ \ \ \mathrm{for}\ x\in\R^n. \label{equality:character-logconcave}
\end{equation}   

Now we can prove the following functional affine isoperimetric inequalities, which provide upper bound (lower bound, respectively)  for the general  Orlicz affine and geominimal surface areas for $h\in \Phi$ (for $h\in \Psi$ respectively). For convenience, let  $$\hat{c}=\bigg(\frac{I(\breve{F}, 1)}{I(F_2\circ \psi^*, \psi^*)}\bigg)^{\frac{1}{n}}  \ \ \mathrm{and}  \ \ \bar{c}=\bigg(\frac{I(\breve{F}, 1)}{I(F_1\circ\psi, \psi)}\bigg)^{\frac{1}{n}}.$$

\begin{theo}\label{generalaffineineq}  Let $F_1, F_2 \colon \R \to(0, \infty)$ be measurable functions such that $0<I( \breve{F}, 1)<\infty$. Let $\psi\in \C_0$.  

 \noindent  (i)  For $h\in \Phi$,  one has, 
  \[ \OrliczA(\psi)  
\le as^{orlicz}_{h, \breve{F}, \breve{F}} \Big(\frac{\|\cdot \|^2}{2\cdot \hat{c}^2}\Big),\] and if in addition both $F_2\circ \psi^*$ and $\breve{F}(\frac{\|\cdot \| ^2}{2})$ are log-concave,   \[ \OrliczG(\psi)  
\le G^{orlicz}_{h, \breve{F}, \breve{F}} \Big(\frac{\|\cdot \|^2}{2\cdot \hat{c}^2}\Big).\] 
  If in addition $\breve{F}$  is strictly decreasing,  equality holds if and only if  $F_1, F_2, \breve{F}, \psi$ satisfy formula (\ref{equality:character}).
\vskip 2mm  \noindent 
 (ii) For $h\in \Phi$ being a decreasing function,   one has,  \[ \OrliczA(\psi) 
\le as^{orlicz}_{h, \breve{F}, \breve{F}} \Big(\frac{ \bar{c}^2\cdot \|\cdot \|^2}{2 }\Big),\] and if in addition both $F_2\circ \psi^*$ and $\breve{F}(\frac{\|\cdot \| ^2}{2})$ are log-concave,  
\[ \OrliczG(\psi) 
\le G^{orlicz}_{h, \breve{F}, \breve{F}} \Big(\frac{ \bar{c}^2\cdot \| \cdot \|^2}{2 }\Big). \]
  The above inequalities hold for $h\in \Psi$ with `` $\leq$" replaced by `` $\geq$". 

 Moreover, if $h\in \Phi$ is strictly decreasing (or $h\in \Psi$ is strictly increasing) and $\breve{F}$ is strictly decreasing,  equality holds if and only if $F_1, F_2, \breve{F}, \psi$ satisfy formula (\ref{equality:character}). \end{theo}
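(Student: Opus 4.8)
The plan is to derive all four inequalities from one short chain built on results already established: Proposition~\ref{bounded by volume product}, the functional Blaschke-Santal\'o inequality (\ref{eq:BSfunctional}) (available since $\psi\in\C_0$), and Corollary~\ref{equality of ball-1} together with the scaling identity (\ref{variable-change-1}). Write $P=I(F_1\circ\psi,\psi)$, $Q=I(F_2\circ\psi^*,\psi^*)$ and $R=I(\breve{F},1)$, so that $PQ\le R^2$ by (\ref{eq:BSfunctional}). For part~(i) with $h\in\Phi$, Proposition~\ref{bounded by volume product} gives $\OrliczA(\psi)\le P\,h\big(\tfrac{(\sqrt{2\pi})^n}{Q}\big)$, and since $P\le R^2/Q$ and $h>0$ this is at most $\tfrac{R^2}{Q}\,h\big(\tfrac{(\sqrt{2\pi})^n}{Q}\big)$. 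A short computation from Corollary~\ref{equality of ball-1} and (\ref{variable-change-1}), using $\hat c^{\,n}=R/Q$, shows this last quantity equals $as^{orlicz}_{h,\breve{F},\breve{F}}\big(\tfrac{\|\cdot\|^2}{2\hat c^2}\big)$. Running the same chain with the $\OrliczG$-version of Proposition~\ref{bounded by volume product} (which requires $F_2\circ\psi^*$ log-concave) and the $G^{orlicz}$-version of Corollary~\ref{equality of ball-1} (which requires $\breve{F}(\tfrac{\|\cdot\|^2}{2})$ log-concave) gives the stated bound for $\OrliczG(\psi)$.

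For part~(ii) I would again start from $\OrliczA(\psi)\le P\,h\big(\tfrac{(\sqrt{2\pi})^n}{Q}\big)$, but now apply (\ref{eq:BSfunctional}) in the form $Q\le R^2/P$, equivalently $\tfrac{(\sqrt{2\pi})^n}{Q}\ge \tfrac{(\sqrt{2\pi})^nP}{R^2}$, and then use the monotonicity of $h$: for $h\in\Phi$ decreasing we get $\OrliczA(\psi)\le P\,h\big(\tfrac{(\sqrt{2\pi})^nP}{R^2}\big)$, while for $h\in\Psi$ (hence increasing) every inequality reverses, giving the corresponding lower bound. Corollary~\ref{equality of ball-1} and (\ref{variable-change-1}), this time with $\bar c^{\,n}=R/P$, identify $P\,h\big(\tfrac{(\sqrt{2\pi})^nP}{R^2}\big)$ with $as^{orlicz}_{h,\breve{F},\breve{F}}\big(\tfrac{\bar c^2\|\cdot\|^2}{2}\big)$, and the $\OrliczG$ statement follows exactly as in part~(i) under the extra log-concavity assumptions.

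The ``only if'' direction of the equality discussion is then immediate: if equality holds in (i) or (ii), the only step in the chain that is not an equality by construction --- the use of (\ref{eq:BSfunctional}) --- must actually be an equality (the remaining steps merely multiply or compose with the positive, strictly monotone $h$), so $PQ=R^2$; since $\breve{F}$ is strictly decreasing and $P\ne 0$, the equality case of (\ref{eq:BSfunctional}) forces $F_1,F_2,\breve{F},\psi$ to satisfy (\ref{equality:character}). For the ``if'' direction, assume (\ref{equality:character}), so $\psi(x)=\langle Ax,x\rangle+a$ with $A$ positive definite. Factor $A=\tfrac{c^2}{2}T^tT$ with $T\in SL_\pm(n)$ and $c>0$; then $\psi=\big(\tfrac{c^2\|\cdot\|^2}{2}+a\big)\circ T$, so $\OrliczA(\psi)=\OrliczA\big(\tfrac{c^2\|\cdot\|^2}{2}+a\big)$ by Theorem~\ref{Affine-invariance--1}. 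Since $\big(\tfrac{c^2\|\cdot\|^2}{2}+a\big)^*=\tfrac{\|\cdot\|^2}{2c^2}-a$, the identities $F_1(t+a)=b\breve{F}(t)$ and $bF_2(t-a)=\breve{F}(t)$ transform every integrand of $V_{h,F_1,F_2}\big(\tfrac{c^2\|\cdot\|^2}{2}+a,\,g\big)$ into the corresponding integrand of $V_{h,\,b\breve{F},\,b^{-1}\breve{F}}\big(\tfrac{c^2\|\cdot\|^2}{2},\,g\big)$; hence $\OrliczA(\psi)=as^{orlicz}_{h,\,b\breve{F},\,b^{-1}\breve{F}}\big(\tfrac{c^2\|\cdot\|^2}{2}\big)$, which Corollary~\ref{equality of ball-1} evaluates to $bc^{-n}R\cdot h\big(\tfrac{b(\sqrt{2\pi})^n}{c^nR}\big)$. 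Evaluating $P$ and $Q$ under (\ref{equality:character}) by the same change of variables gives $P=bc^{-n}R$ and $Q=c^nR/b$, and substituting these into the right-hand sides of (i) and of (ii) reproduces exactly $bc^{-n}R\cdot h\big(\tfrac{b(\sqrt{2\pi})^n}{c^nR}\big)$; the $\OrliczG$ equality case is identical, since under (\ref{equality:character}) the function $F_2\circ\psi^*$ is automatically log-concave once $\breve{F}(\tfrac{\|\cdot\|^2}{2})$ is, and the $h\in\Psi$ cases (in part~(ii) and in its equality statement) are handled verbatim by replacing ``$\inf$'' with ``$\sup$'' and reversing the inequalities.

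The step I expect to be most delicate is the ``if'' direction of the equality characterization: verifying that shifting the quadratic inner function by the constant $a$ corresponds precisely to replacing $(F_1,F_2)$ by $(b\breve{F},b^{-1}\breve{F})$ in the Orlicz mixed integral, and then tracking all the powers of $c$, $b$ and $(\sqrt{2\pi})^n$ so that the reduced value of $\OrliczA(\psi)$ matches both claimed right-hand sides. Everything else is a short, essentially mechanical concatenation of Proposition~\ref{bounded by volume product}, inequality (\ref{eq:BSfunctional}) and Corollary~\ref{equality of ball-1}.
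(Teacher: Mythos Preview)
Your proposal is correct and follows essentially the same route as the paper: Proposition~\ref{bounded by volume product}, then the functional Blaschke--Santal\'o inequality~(\ref{eq:BSfunctional}), then identification of the resulting expression via Corollary~\ref{equality of ball-1} and the scaling relation~(\ref{variable-change-1}); the ``only if'' direction of equality is deduced exactly as in the paper from the equality case of~(\ref{eq:BSfunctional}). The one place where you deviate slightly is the ``if'' direction: you factor $A=\tfrac{c^2}{2}T^tT$ and invoke the $SL_\pm(n)$-invariance of Theorem~\ref{Affine-invariance--1} to reduce to a radial quadratic and then apply Corollary~\ref{equality of ball-1} with parameters $(b\breve{F},\,b^{-1}\breve{F})$, whereas the paper performs the analogous computation directly (formula~(\ref{233-333})) without passing through Theorem~\ref{Affine-invariance--1}. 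Both arrive at the value $bc^{-n}R\cdot h\big(\tfrac{b(\sqrt{2\pi})^n}{c^nR}\big)$, and your reduction is arguably a bit cleaner since it reuses already-established invariance rather than redoing the Jensen argument in the ellipsoidal case.
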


\begin{proof} (i).  First, $I(F_2\circ \psi^*, \psi^*)=\hat{c}^{-n} I (\breve{F}, 1)=I(\breve{F}, {\hat{c}})$ by formula (\ref{variable-change-1}). Inequality (\ref{eq:BSfunctional}) implies  $$I(F_1\circ\psi, \psi)\leq  \hat{c}^n\cdot I(\breve{F}, 1)=I(\breve{F}, \hat{c}^{-1}).$$
 Proposition \ref{bounded by volume product} implies that  for all $h\in \Phi$, \begin{eqnarray}   \OrliczA(\psi)   \leq I(F_1\circ \psi, \psi) \cdot h\Big(\frac{(\sqrt{2\pi})^n}{I(F_2\circ\psi^*, \psi^*)}\Big) \leq I(\breve{F},  \hat{c}^{-1}) \cdot h\Big(\frac{ (\sqrt{2\pi})^n}{\hat{c} ^{-2n} \cdot I(\breve{F}, \hat{c}^{-1})}\Big),\label{1-3-4-5}\end{eqnarray}  and hence  the desired result follows from Corollary \ref{equality of ball-1}.  

Now let us characterize the condition for equality. First, assume that $F_1, F_2, \breve{F}, \psi$ satisfy formula (\ref{equality:character}). Letting $A=T^t T$ and $z=\sqrt{2} Ty$, one has, 
\begin{eqnarray*} I\big(\breve{F} (\langle Ax, x\rangle)\big) =   \int_{\bbR^n} \breve{F}\Big(\frac{\|\sqrt{2} Tx\|^2}{2}\Big) \,dx =  \frac{1}{\sqrt{2^n\cdot \det A}} \int_{\bbR^n} \breve{F} \Big(\frac{\|z\|^2}{2}\Big) \,dz= \frac{I(\breve{F}, 1)}{\sqrt{2^n\cdot \det A}}. \end{eqnarray*} Similar to the proof of Corollary \ref{equality of ball-1}, one can show that 
\begin{eqnarray}   \OrliczA(\psi)&=& b \cdot I\big(\breve{F} (\langle Ax, x\rangle)\big)\cdot h\bigg(\frac{b\cdot (\sqrt{2\pi})^n}{2^n \cdot \det A\cdot I\big(\breve{F}(\langle Ax, x\rangle)\big)}\bigg)\nonumber \\ &=& \frac{b \cdot I(\breve{F}, 1)}{\sqrt{2^n\cdot \det A}}  \cdot h\bigg(\frac{b\cdot (\sqrt{2\pi})^n}{\sqrt{2^n \cdot \det A} \cdot I\big(\breve{F}, 1\big)}\bigg). \label{233-333}
\end{eqnarray}  For $\psi(x)=\langle Ax, x\rangle +a$, one has $\psi^*(y)=\frac{1}{4}\langle A^{-1} y, y\rangle-a$ and hence, \begin{eqnarray*} I(F_2\circ \psi^*, \psi^*) &=& \frac{1}{b} \int_{\bbR^n} \breve{F}\Big(\frac{\langle A^{-1}y, y\rangle}{4}\Big) \,dy  = \frac{\sqrt{2^n\cdot \det A}}{b}\  I(\breve{F}, 1),\\  \hat{c}^n&=&\frac{I(\breve{F}, 1)}{I(F_2\circ \psi^*, \psi^*)} =\frac{b}{\sqrt{2^n\cdot \det A}}. \end{eqnarray*}  Corollary \ref{equality of ball-1}, formula  (\ref{variable-change-1}) and formula (\ref{233-333}) imply that if  $F_1, F_2, \breve{F}, \psi$ satisfy formula (\ref{equality:character}),   \begin{eqnarray*}  as^{orlicz}_{h, \breve{F}, \breve{F}} \Big(\frac{\|\cdot \|^2}{2\cdot \hat{c}^2}\Big) &=& I(\breve{F},  \hat{c}^{-1}) \cdot h\Big(\frac{ (\sqrt{2\pi})^n}{\hat{c} ^{-2n} \cdot I(\breve{F}, \hat{c}^{-1})}\Big)\\ &=& \hat{c}^{n} I(\breve{F},  1) \cdot h\Big(\frac{ (\sqrt{2\pi})^n}{\hat{c} ^{-n} \cdot I(\breve{F}, 1)}\Big) \\ &=& \frac{b\cdot I(\breve{F},  1)}{\sqrt{2^n\cdot \det A}}  \cdot h\Big(\frac{b\cdot (\sqrt{2\pi})^n}{ \sqrt{2^n\cdot \det A} \cdot I(\breve{F}, 1)}\Big) \\ &=&   \OrliczA(\psi). \end{eqnarray*}   

On the other hand, if $\breve{F}$ is strictly decreasing, then equality holds in (\ref{1-3-4-5}) only if equality holds in inequality (\ref{eq:BSfunctional}). That is,  $F_1, F_2, \breve{F}, \psi$ satisfy formula (\ref{equality:character}).  Hence, we have verified the desired characterization of equality in (i).  
 
 \vskip 2mm \noindent (ii). By inequality (\ref{eq:BSfunctional}), one can check that $I(F_2\circ \psi^*, \psi^*)\leq \bar{c}^nI(\breve{F}, 1).$ Proposition \ref{bounded by volume product} implies that for all decreasing $h\in \Phi$,  \begin{eqnarray*}  \OrliczA(\psi)  \leq I(F_1\circ \psi, \psi) \cdot h\Big(\frac{(\sqrt{2\pi})^n}{I(F_2\circ\psi^*, \psi^*)}\Big) \leq I(\breve{F}, {\bar{c}})  \cdot  h\Big(\frac{(\sqrt{2\pi})^n}{\bar{c}^{2n} \cdot I(\breve{F}, {\bar{c}})}\Big),\end{eqnarray*}  and hence  the desired result follows from Corollary \ref{equality of ball-1}.   The characterization of equality follows along the same lines as in (i).

The desired results for  $ \OrliczG(\psi)$ follow along the same lines. The additional assumptions that both $F_2\circ \psi^*$ and $\breve{F}(\frac{\|\cdot \| ^2}{2})$ are log-concave are needed in order to use   Proposition \ref{bounded by volume product} and Corollary \ref{equality of ball-1}.    \end{proof}  
 
 The following result follows immediately from Theorem \ref{generalaffineineq} by letting $F_1(t)=F_2(t)=e^{-t}$. These affine isoperimetric inequalities state that the maximum (minimum, respectively) of $as^{orlicz}_{h}(f)$ and $G^{orlicz}_{h}(f)$ for $h\in \Phi$ (for $h\in\Psi$, respectively) attain at (and only at) the Gaussian functions.

\begin{cor}\label{generalaffineineq-2}  Let  $\psi\in \C_0$ and $f=e^{-\psi}$.  

  \noindent  (i)  For $h\in \Phi$, one has,  \[ as^{orlicz}_{h}(f) \leq G^{orlicz}_{h}(f) 
\le G^{orlicz}_{h}\Big(\!\exp\!\Big(\!-\frac{ \|\cdot \|^2}{4\pi}\cdot \big[I\big(f^\circ\big)\big]^{\frac{2}{n}}\Big)\Big).\]   Equality holds if and only if  $\psi$  satisfies formula (\ref{equality:character-logconcave}).
\vskip 2mm  \noindent 
 (ii) For decreasing $h\in \Phi$, one has,
 \[ as^{orlicz}_{h}(f) \leq G^{orlicz}_{h}(f) 
\le G^{orlicz}_{h} \Big(\!\exp\Big(\!-\pi  \|\cdot \|^2 \cdot \big[I\big(f \big)\big]^{-\frac{2}{n}}\Big)\Big).\] The above inequality holds for $h\in \Psi$ with `` $\leq$" replaced by `` $\geq$".

   If $h\in \Phi$ is strictly decreasing (or  $h\in \Psi$ is strictly increasing),  equality holds if and only if $\psi$ satisfies formula (\ref{equality:character-logconcave}). \end{cor}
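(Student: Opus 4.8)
The plan is to obtain Corollary~\ref{generalaffineineq-2} as the specialization of Theorem~\ref{generalaffineineq} to $F_1(t)=F_2(t)=e^{-t}$, after translating every quantity into the language of the log-concave function $f=e^{-\psi}$. First I would record the elementary simplifications forced by this choice. Since $e^{-t}$ is log-concave and strictly decreasing, one has $\breve{F}=e^{-t}$, so $\breve{F}\big(\|\cdot\|^2/2\big)=\gamma_n$ is log-concave and $I(\breve{F},1)=\int_{\R^n}e^{-\|x\|^2/2}\,dx=(\sqrt{2\pi})^n$; moreover $F_1\circ\psi=f$, $F_2\circ\psi^*=f^\circ$, so $I(F_1\circ\psi,\psi)=I(f)$, $I(F_2\circ\psi^*,\psi^*)=I(f^\circ)$, and $f^\circ$ is automatically log-concave. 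Consequently the two extra hypotheses in Theorem~\ref{generalaffineineq} (that $F_2\circ\psi^*$ and $\breve{F}(\|\cdot\|^2/2)$ be log-concave) hold for free, so the $G^{orlicz}_{h,\breve F,\breve F}$ half of that theorem applies here with no additional assumptions.

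Next I would compute the arguments appearing on the right-hand sides. With the data above, $\hat{c}=\sqrt{2\pi}\,/\,I(f^\circ)^{1/n}$ and $\bar{c}=\sqrt{2\pi}\,/\,I(f)^{1/n}$, hence $\|\cdot\|^2/(2\hat{c}^2)=\tfrac{\|\cdot\|^2}{4\pi}[I(f^\circ)]^{2/n}$ and $\bar{c}^2\|\cdot\|^2/2=\pi\|\cdot\|^2[I(f)]^{-2/n}$. Using $as^{orlicz}_{h,e^{-t},e^{-t}}(\phi)=as^{orlicz}_h(e^{-\phi})$ and $G^{orlicz}_{h,e^{-t},e^{-t}}(\phi)=G^{orlicz}_h(e^{-\phi})$, the quadratic test functions $\tfrac{\|\cdot\|^2}{2\hat c^2}$ and $\tfrac{\bar c^2\|\cdot\|^2}{2}$ correspond exactly to the Gaussians $\exp\!\big(-\tfrac{\|\cdot\|^2}{4\pi}[I(f^\circ)]^{2/n}\big)$ and $\exp\!\big(-\pi\|\cdot\|^2[I(f)]^{-2/n}\big)$, both of which are log-concave. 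By Corollary~\ref{equality of ball-1} the general Orlicz affine and geominimal surface areas coincide on such functions, so all bounds may be written with $G^{orlicz}_h$; combining this with $as^{orlicz}_h(f)=\OrliczA(\psi)\le\OrliczG(\psi)=G^{orlicz}_h(f)$ (the remark after Definition~\ref{Orlicz affine surface}, using that $f^\circ$ is log-concave) and with parts (i) and (ii) of Theorem~\ref{generalaffineineq} yields the two displayed chains of inequalities, including the $h\in\Psi$ version of part (ii).

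Finally, for the equality cases, $\breve{F}=e^{-t}$ is strictly decreasing, so the equality characterizations of Theorem~\ref{generalaffineineq} are available (under the stated strict monotonicity of $h$ in part (ii)). It then remains only to verify that formula~(\ref{equality:character}) collapses to formula~(\ref{equality:character-logconcave}) when $F_1=F_2=\breve{F}=e^{-t}$: the relation $F_1(t+a)=b\,\breve{F}(t)$ becomes $e^{-a}=b$ and $b\,F_2(t-a)=\breve{F}(t)$ becomes $be^{a}=1$, which are compatible, leaving precisely $\psi(x)=\langle Ax,x\rangle+a$ with $A$ positive definite. I do not expect a genuine obstacle here; the only point requiring care is bookkeeping---checking that the log-concavity side conditions are indeed automatic (they are, since $f^\circ$ and $\gamma_n$ are log-concave), that $h$ needs no monotonicity in (i) but must be (strictly) decreasing in (ii), and that $\hat c$ and $\bar c$ are inserted into the correct slot of $\gamma_n\circ c$.
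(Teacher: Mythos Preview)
Your proposal is correct and follows exactly the paper's approach: the paper states that the corollary ``follows immediately from Theorem~\ref{generalaffineineq} by letting $F_1(t)=F_2(t)=e^{-t}$,'' and you carry out precisely this specialization, verifying the side conditions, computing $\hat c$ and $\bar c$, and reducing the equality characterization~(\ref{equality:character}) to~(\ref{equality:character-logconcave}). Your write-up is simply a more explicit version of what the paper leaves implicit.
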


 We now establish cyclic inequalities for the general  Orlicz affine and geominimal surface areas of convex functions. Assume that the function $h_1$ always has inverse $h_1^{-1}$ and let $H=h\circ h_1^{-1}$.  Moreover, $H(0)$ and $H(\infty)$ are defined by the limit of $H(t)$ as $t\rightarrow 0$ and $t\rightarrow \infty$ respectively (could be a finite number or $\infty$, if exist).  Note that if  $h^{-1}$ and $h^{-1}_1$ both exist, then condition (a) is equivalent to condition (d); and condition (c) is equivalent to condition (f) if in addition $H$ is increasing.    
\begin{theo}\label{cyclic}  Let  $F_1, F_2 \colon \R\to (0, \infty)$ be measurable functions and $\psi\in \C$.  

 \noindent
(i) Assume one of the following conditions: (a) $h\in \Phi$ and $h_1\in \Psi$ with $H$ increasing; (b) $h, h_1\in \Phi$ with $H$ decreasing; (c) $H$ concave increasing with either $h, h_1 \in \Phi$ or $h, h_1 \in \Psi$. Then \begin{equation*}
\frac{\OrliczA(\psi)}{I(F_1\circ \psi, \psi)} \leq  H\bigg(\frac{as^{orlicz}_{h_1, F_1, F_2}(\psi)}{I(F_1\circ \psi, \psi)} \bigg) . \end{equation*}
 (ii) Assume one of the following conditions: (d) $h\in \Psi$ and $h_1\in \Phi$ with $H$ increasing; (e) $H$ convex decreasing with one in $\Phi$ and the other one in $\Psi$; (f) $H$  convex increasing with either $h, h_1\in \Phi$ or $h, h_2\in \Psi$. Then \begin{equation*}
\frac{\OrliczA(\psi)}{I(F_1\circ \psi, \psi)} \geq  H\bigg(\frac{as^{orlicz}_{h_1, F_1, F_2}(\psi)}{I(F_1\circ \psi, \psi)} \bigg) . \end{equation*}  

The same inequalities also hold for the general  Orlicz geominimal surface area of convex functions, if in addition $F_2\circ \psi^*\in \M$ in conditions (a), (b) and (d).   \end{theo}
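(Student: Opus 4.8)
The plan is to reduce everything to a single application of the functional Jensen-type inequality that lies behind Definition \ref{Orlicz affine surface}. First I would fix $\psi\in\C$ and work with the normalized family of test functions $\tilde g = (\sqrt{2\pi})^n g / I(g,\psi^*)$, so that each competitor in the two definitions satisfies $I(\tilde g,\psi^*) = (\sqrt{2\pi})^n$. For such a $\tilde g$, define the probability measure $d\mu_{\tilde g} = \tilde g(\nabla\psi(x))\det\nabla^2\psi(x)\,(\sqrt{2\pi})^{-n}\,dx$ on $X_\psi$ (it has total mass $1$ by formula (\ref{ratio of g})), and observe that
\[
\frac{V_{h_1, F_1, F_2}(\psi,\tilde g)}{I(F_1\circ\psi,\psi)}
= \int_{X_\psi} h_1\!\left(\frac{F_2(\langle x,\nabla\psi(x)\rangle-\psi(x))}{\tilde g(\nabla\psi(x))}\right)^{-1}\!\!\!\cdots
\]
is not literally of this form, so instead I would keep the representation as it is and use that $V_{h,F_1,F_2}(\psi,\tilde g) = \int_{X_\psi} h(u(x))\,F_1(\psi(x))\,dx$ with $u(x) = \tilde g(\nabla\psi(x))/F_2(\langle x,\nabla\psi(x)\rangle-\psi(x))$, and similarly with $h_1$ in place of $h$. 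Writing $h = H\circ h_1$, we get $h(u(x)) = H\big(h_1(u(x))\big)$, and dividing by the finite positive constant $I(F_1\circ\psi,\psi)$ turns $F_1(\psi(x))\,dx$ into a probability measure $\nu$ on $X_\psi$.

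The key step is then Jensen's inequality applied to $H$ against $\nu$: under the concavity hypotheses in (c) (resp. convexity in (f), (e)) one has
\[
\int_{X_\psi} H\big(h_1(u(x))\big)\,d\nu(x) \ \le\ H\!\left(\int_{X_\psi} h_1(u(x))\,d\nu(x)\right)
\]
(resp.\ $\ge$), i.e. $V_{h,F_1,F_2}(\psi,\tilde g)/I(F_1\circ\psi,\psi) \le H\big(V_{h_1,F_1,F_2}(\psi,\tilde g)/I(F_1\circ\psi,\psi)\big)$. Since $H$ is assumed increasing in these cases, monotonicity lets me then take the appropriate extremum over $\tilde g\in\Fs$ (or over $\tilde g\in\Ms$ for the geominimal version): taking the infimum on the left when $h\in\Phi$ gives $\OrliczA(\psi)$, and on the right the infimum of $H(\,\cdot\,)$ over $\tilde g$ equals $H$ evaluated at the infimum (because $H$ is increasing), which by the normalization built into $h_1\in\Phi$ is exactly $H\big(as^{orlicz}_{h_1,F_1,F_2}(\psi)/I(F_1\circ\psi,\psi)\big)$; the mismatched $\Phi$/$\Psi$ combinations and the "$H$ monotone" hypotheses in (a), (b), (d) are designed precisely so that the directions of the two extrema and of $H$ line up. For case (b), where both $h,h_1\in\Phi$ but $H$ is decreasing, I would instead note $h$ is a decreasing function of $h_1(u)$, so the infimum over $\tilde g$ on the left corresponds to the supremum of $h_1(u)$, which is not directly $as^{orlicz}_{h_1}$; here the correct argument is that for each fixed $\tilde g$ we have the pointwise-in-$\tilde g$ bound $V_h/I(F_1\circ\psi,\psi)\le H(V_{h_1}/I(F_1\circ\psi,\psi))$, and then $\OrliczA(\psi)/I(F_1\circ\psi,\psi)=\inf_{\tilde g}V_h/I(F_1\circ\psi,\psi)\le\inf_{\tilde g}H(V_{h_1}/I(F_1\circ\psi,\psi))=H(\sup_{\tilde g}V_{h_1}/I(F_1\circ\psi,\psi))$, but $h_1\in\Phi$ gives $as^{orlicz}_{h_1}=\inf_{\tilde g}V_{h_1}$, so a little more care is needed — one uses that for $h_1\in\Phi$ with $H=h\circ h_1^{-1}$ decreasing, composing back yields the claim after re-parametrizing by $h_1(u)$. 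I would present the six cases uniformly by the schematic "Jensen in the direction dictated by convexity/concavity of $H$, then pass to the extremum in the direction dictated by $\Phi$ vs $\Psi$, using monotonicity of $H$ to commute the extremum with $H$."

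The routine verifications I would relegate to a sentence each: that $F_1\circ\psi\,dx/I(F_1\circ\psi,\psi)$ is a probability measure (immediate from the standing assumption $I(F_1\circ\psi,\psi)\in(0,\infty)$), that the normalization $\tilde g\mapsto(\sqrt{2\pi})^n\tilde g/I(\tilde g,\psi^*)$ is a bijection of $\Fs$ onto $\{g\in\Fs: I(g,\psi^*)=(\sqrt{2\pi})^n\}$ and preserves $\Ms$, and that $V_{h_1,F_1,F_2}(\psi,\tilde g)$ ranges over a set whose infimum (for $h_1\in\Phi$) is by definition $as^{orlicz}_{h_1,F_1,F_2}(\psi)$. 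The geominimal statement is identical word for word with $\Fs$ replaced by $\Ms$, the only extra input being that the competitor $\tilde g=(\sqrt{2\pi})^n(F_2\circ\psi^*)/I(F_2\circ\psi^*,\psi^*)$ used implicitly (or rather, the requirement that $\Ms\neq\emptyset$ contains enough functions) is legitimate, which is why $F_2\circ\psi^*\in\M$ is assumed in cases (a), (b), (d). The main obstacle I anticipate is bookkeeping rather than mathematics: getting the inequality direction right in all six cases simultaneously, since one must track three independent sign conventions — whether $h\in\Phi$ or $\Psi$ (hence $\inf$ or $\sup$ defining $\OrliczA$), whether $h_1\in\Phi$ or $\Psi$ (hence which extremum is $as^{orlicz}_{h_1}$), and whether $H$ is convex or concave and increasing or decreasing — and the cases (a)–(f) are exactly the consistent combinations. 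I would organize the write-up as: (1) set up $\nu$ and $\mu_{\tilde g}$; (2) the pointwise-in-$\tilde g$ Jensen inequality with explicit direction; (3) a short case table passing to the extremum; (4) the remark that conditions (a)$\Leftrightarrow$(d) and (c)$\Leftrightarrow$(f) under invertibility of $h,h_1$ is merely substituting $H\leftrightarrow H^{-1}$.
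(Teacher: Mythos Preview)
Your treatment of cases (c), (e), (f) is correct and is exactly what the paper does: write $h=H\circ h_1$, apply Jensen's inequality for the concave (resp.\ convex) function $H$ against the probability measure $d\nu=F_1(\psi(x))\,dx/I(F_1\circ\psi,\psi)$ to obtain a pointwise-in-$g$ inequality between $V_{h,F_1,F_2}(\psi,g)/I(F_1\circ\psi,\psi)$ and $H\big(V_{h_1,F_1,F_2}(\psi,g)/I(F_1\circ\psi,\psi)\big)$, then use monotonicity of $H$ to pass to the appropriate extremum on both sides.

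The gap is in cases (a), (b), (d). You attempt to fold these into the same Jensen scheme, but look again at the hypotheses: in (a), (b), (d) the function $H$ is only assumed monotone, \emph{not} convex or concave. So the pointwise-in-$g$ Jensen inequality you want simply does not hold, and your account of (b) (``a little more care is needed \ldots\ composing back yields the claim after re-parametrizing'') is not an argument. The paper handles these three cases by an entirely different, one-line route: it invokes Proposition~\ref{bounded by volume product}, which for $h\in\Phi$ gives
\[
\frac{\OrliczA(\psi)}{I(F_1\circ\psi,\psi)}\ \le\ h\!\left(\frac{(\sqrt{2\pi})^n}{I(F_2\circ\psi^*,\psi^*)}\right)=H\!\left(h_1\!\left(\frac{(\sqrt{2\pi})^n}{I(F_2\circ\psi^*,\psi^*)}\right)\right),
\]
and the companion inequality from the same proposition applied to $h_1$ (with the direction depending on whether $h_1\in\Phi$ or $h_1\in\Psi$). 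Monotonicity of $H$ then chains these two bounds together. Concretely, in case (a) one has $h_1\in\Psi$, so $as^{orlicz}_{h_1,F_1,F_2}(\psi)/I(F_1\circ\psi,\psi)\ge h_1\big((\sqrt{2\pi})^n/I(F_2\circ\psi^*,\psi^*)\big)$; applying the increasing $H$ and combining with the display above gives the claim. Cases (b) and (d) are identical mutatis mutandis. This also explains cleanly why the geominimal version needs $F_2\circ\psi^*\in\M$ precisely in (a), (b), (d): Proposition~\ref{bounded by volume product} requires that hypothesis, whereas the Jensen argument for (c), (e), (f) does not single out any particular competitor.
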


  \noindent {\bf Remark.} In particular, the above inequalities hold for $as^{orlicz}_{h}(f)$ and $G^{orlicz}_{h}(f)$, as long as corresponding conditions are verified.  

 \begin{proof}  For completeness, we include a brief proof, which is similar to that of Theorem 3.1 in \cite{Ye2014a2}. Results for conditions (a), (b) and (d) follow immediately from  Proposition \ref{bounded by volume product} and the monotonicity of $H$.    Results for conditions (c), (e) and (f) hold by the combination of Jensen's inequality, the monotonicity of $H$, and Definition \ref{Orlicz affine surface}. Here, as an example, we show the case for condition (c) and omit the proofs for other cases.   Jensen's inequality to the concave function $H$ implies  \begin{eqnarray}\frac{V_{h, F_1, F_2}(\psi, g)}{I(F_1\circ \psi, \psi)}   \leq  H\bigg(\frac{V_{h_1, F_1, F_2} (\psi, g)}{I(F_1\circ \psi, \psi)}\bigg)\nonumber.\end{eqnarray} As $H$ is increasing and $h,h_1\in \Phi$, then \begin{eqnarray}\frac{\OrliczA(\psi)}{I(F_1\circ\psi, \psi)} &=&\inf\bigg\{\frac{ V_{h, F_1, F_2}\big(\psi, g \big)}{I(F_1\circ \psi, \psi)}:   g\in \F \ \mathrm{with} \  I(g, \psi^*)=(\sqrt{2\pi})^n  \bigg\} \nonumber \\ &\leq& H\bigg( \inf\Big\{ \frac{ V_{h_1, F_1, F_2}(\psi, g)}{I(F_1\circ \psi, \psi)}:  g\in \F \ \mathrm{with} \  I(g, \psi^*)=(\sqrt{2\pi})^n  \Big\}\bigg)\nonumber\\ &=&  H\bigg(\frac{as^{orlicz}_{h_1, F_1, F_2} (\psi)}{I(F_1\circ\psi, \psi)}\bigg)\nonumber.\end{eqnarray} The case $h, h_1\in \Psi$ follows similarly with `` $\inf$ " replaced by `` $\sup$". \end{proof}

 \subsection{The general  $L_p$ geominimal surface area for convex functions and a Santal\'{o} type inequality}
Theorem \ref{equivalent:affine:surface:area} and Definition \ref{Orlicz affine surface} yield that $$as_{p, F_1, F_2}(\psi) =(\sqrt{2\pi})^{\frac{np}{n+p}}\big[\OrliczA(\psi)\big]^{\frac{n}{n+p}} $$ with $h(t)=t^{-p/n}$ for $-n\neq p\in \bbR$. Its properties have been discussed in e.g.  \cite{Caglar-6, CaglarWerner}. 

 In this subsection, we briefly discuss properties for the general $L_p$ geominimal surface areas of convex functions  for $-n\neq p\in \bbR$.  Taking Theorem \ref{equivalent:affine:surface:area} into account, it is more natural to define the general $L_p$ geominimal surface areas of convex functions as $$G_{p, F_1, F_2}(\psi) =(\sqrt{2\pi})^{\frac{np}{n+p}}\big[\OrliczG(\psi)\big]^{\frac{n}{n+p}} $$ with $h(t)=t^{-p/n}$ for $-n\neq p\in \bbR$.

  \begin{defn}
  \label{equivalent:affine:surface:area-homogeneous p case}  For $p\geq 0$,  define  the general  $L_p$ geominimal surface area of $\psi\in \C$ by   \begin{equation*}G_{p, F_1, F_2}(\psi)=\inf_{ g\in \M } \left\{ V_{p, F_1, F_2} (\psi, g)^{\frac{n}{n+p}}\
I(g, \psi^*)^{\frac{p}{n+p}}\right\}; \end{equation*}  while for $-n\neq p<0$, $G_{p, F_1, F_2}(\psi)$ is defined similarly but with `` $\inf$" replaced by `` $\sup$".   

 In particular, the  $L_{p}$ geominimal surface area of $f=e^{-\psi}$ can be defined as  $$G_p(f)=G_{p, e^{-t}, e^{-t}}(\psi).$$  
 \end{defn}    
 
 Results in subsections \ref{section 3.2}  and \ref{section 3.3}  can be modified accordingly to  $G_{p, F_1, F_2}(\psi)$ and $G_{p}(f).$ For instance, $G_{p, F_1, F_2}(\psi)$ is $SL_{\pm}(n)$-invariant. Moreover,  for $T\in GL(n)$,   $$G_{p, F_1, F_2}(\psi\circ T)=\big|\det (T)\big|^{\frac{p-n}{p+n}}\cdot  G_{p, F_1, F_2}(\psi).$$ It also has the homogeneous degree $\frac{n(p-n)}{p+n}$, i.e.,  $$G_{p, F_1, F_2}(\psi\circ \lambda)=|\lambda|^{\frac{n(p-n)}{p+n}} \cdot G_{p, F_1, F_2}(\psi),$$ where $(\psi\circ \lambda)(x)=\psi(\lambda x)$ for $\lambda\in \bbR$ and $x\in \bbR^n$. 
 
 Let ${F} \colon \R\to (0, \infty)$  satisfy that  $F\big(\frac{\|\cdot\| ^2}{2}\big)$  is log-concave and $0<I(F, 1) <\infty. $ Corollary \ref{equality of ball-1} implies that for $-n\neq p\in \bbR$ and $c>0$ a constant,   \begin{equation}\label{Ellipsoid-p--1}G_{p, F, F}\Big(\frac{c^2\|\cdot\|^2}{2} \Big) =c^ {\frac{ n(p-n)}{n+p}}\cdot    I\big(F, 1\big),\end{equation}  and in particular  $G_p\big(\gamma_n\circ c\big)=c^{\frac{n(p-n)}{p+n}} (\sqrt{2\pi})^{n}$. 
 
  A direct consequence of Proposition \ref{bounded by volume product} is the following result. Similar inequalities were obtained in  \cite{Caglar-6, CaglarWerner}.
\begin{prop}\label{bounded by volume product-1} Let $\psi\in \C$. If $F_2\circ \psi^*\in \M$, then for $p\in (0, \infty)$ and $f=e^{-\psi}$, \begin{eqnarray*}   G_{p, F_1, F_2}(\psi)    &\leq&   {\big[I(F_1\circ \psi, \psi)\big]^{\frac{n}{n+p}} \cdot  \big[I(F_2\circ \psi^*, \psi^*) \big]^{\frac{p}{n+p}}}, \\ G_p(f)   &\leq&   {\big[I(f)\big]^{\frac{n}{n+p}} \cdot  \big[I(f^\circ) \big]^{\frac{p}{n+p}}}.\end{eqnarray*}  
 Similar inequalities hold  for $p\in (-\infty, -n)\cup (-n, 0)$  with `` $\leq$" replaced by `` $\geq$".  \end{prop}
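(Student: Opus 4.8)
The plan is to derive Proposition~\ref{bounded by volume product-1} directly from Proposition~\ref{bounded by volume product} by specializing the function $h$. Recall that by the remark after Definition~\ref{Orlicz affine surface} and the definition of $G_{p, F_1, F_2}(\psi)$ at the start of this subsection, we have the identity
\begin{equation*}
G_{p, F_1, F_2}(\psi) = (\sqrt{2\pi})^{\frac{np}{n+p}}\big[\OrliczG(\psi)\big]^{\frac{n}{n+p}}
\end{equation*}
with the particular choice $h(t) = t^{-p/n}$. So the first step is to observe that for $p \in (0, \infty)$ the function $h(t) = t^{-p/n}$ is strictly convex on $(0, \infty)$, hence $h \in \Phi$; this is exactly the regime where Proposition~\ref{bounded by volume product} gives an \emph{upper} bound. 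Since $F_2 \circ \psi^*$ is assumed log-concave, i.e.\ $F_2 \circ \psi^* \in \M$, the second inequality of Proposition~\ref{bounded by volume product} applies and yields
\begin{equation*}
\OrliczG(\psi) \leq I(F_1\circ \psi, \psi) \cdot h\Big(\frac{(\sqrt{2\pi})^n}{I(F_2\circ\psi^*, \psi^*)}\Big) = I(F_1\circ \psi, \psi) \cdot \Big(\frac{(\sqrt{2\pi})^n}{I(F_2\circ\psi^*, \psi^*)}\Big)^{-p/n}.
\end{equation*}

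The second step is purely algebraic: raise both sides to the power $\frac{n}{n+p}$ (a positive, increasing operation, so the inequality is preserved), multiply by $(\sqrt{2\pi})^{\frac{np}{n+p}}$, and simplify. The factor $(\sqrt{2\pi})^n$ inside $h$ contributes $\big((\sqrt{2\pi})^n\big)^{-p/n \cdot n/(n+p)} = (\sqrt{2\pi})^{-\frac{np}{n+p}}$, which cancels against the prefactor $(\sqrt{2\pi})^{\frac{np}{n+p}}$; the remaining powers of $I(F_1\circ\psi,\psi)$ and $I(F_2\circ\psi^*,\psi^*)$ combine to exactly $\big[I(F_1\circ \psi, \psi)\big]^{\frac{n}{n+p}} \cdot \big[I(F_2\circ \psi^*, \psi^*) \big]^{\frac{p}{n+p}}$, giving the first displayed inequality. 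The inequality for $G_p(f)$ then follows by taking $F_1(t) = F_2(t) = e^{-t}$, since in that case $F_1\circ\psi = e^{-\psi} = f$ and $F_2\circ\psi^* = e^{-\psi^*} = f^\circ$ is automatically log-concave, so the hypothesis $F_2\circ\psi^* \in \M$ is free.

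For the range $p \in (-\infty, -n) \cup (-n, 0)$, the third step is to note that $h(t) = t^{-p/n}$ now has positive exponent $-p/n > 0$, so it is increasing, and it is strictly concave on $(0,\infty)$ exactly when $-p/n < 1$, i.e.\ $p \in (-n, 0)$; for $p < -n$ one has $-p/n > 1$ and $h$ is convex, not concave. I expect the cleanest route here is not to invoke $\Psi$ directly but to re-run the argument: for $p \in (-n, 0)$ we have $h \in \Psi$, Proposition~\ref{bounded by volume product} gives $\OrliczG(\psi) \geq I(F_1\circ\psi,\psi)\cdot h\big((\sqrt{2\pi})^n/I(F_2\circ\psi^*,\psi^*)\big)$, and since $\frac{n}{n+p} > 0$ still holds (as $n + p > 0$ when $p > -n$), raising to that power preserves ``$\geq$'' and the same cancellation produces the reversed inequality. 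The main obstacle — really the only delicate point — is the case $p < -n$: there $n + p < 0$, so $\frac{n}{n+p} < 0$, raising to this power \emph{reverses} inequalities, and simultaneously $h(t) = t^{-p/n}$ is convex rather than concave; one must check that these two sign reversals interact correctly with the definition of $\OrliczG$ (which uses $\sup$ precisely when $h \in \Psi$, but here $h \notin \Psi$). The careful thing to do is to track the chain of definitions — $G_{p,F_1,F_2}$ for $p < -n$ is defined via $\sup$ in Definition~\ref{equivalent:affine:surface:area-homogeneous p case}, which should match $[\OrliczG(\psi)]^{n/(n+p)}$ with the negative exponent flipping the $\inf$-over-$\Phi$ into the correct direction — and verify that the net effect still gives ``$\geq$'' in the displayed bound; I would handle this by working from Definition~\ref{equivalent:affine:surface:area-homogeneous p case} and Proposition~\ref{bounded by volume product-1}'s analogue for $V_{p,F_1,F_2}$ via Hölder's inequality as in the proof of Theorem~\ref{equivalent:affine:surface:area}, rather than forcing everything through the $h$-formalism.
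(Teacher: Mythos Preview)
Your approach is correct and matches the paper's, which simply declares the result ``a direct consequence of Proposition~\ref{bounded by volume product}'' without giving any further argument. Your hedging about the case $p<-n$ is unnecessary: there $h(t)=t^{-p/n}\in\Phi$, so Proposition~\ref{bounded by volume product} gives $\OrliczG(\psi)\leq I(F_1\circ\psi,\psi)\cdot h\big((\sqrt{2\pi})^n/I(F_2\circ\psi^*,\psi^*)\big)$, and raising to the \emph{negative} power $\tfrac{n}{n+p}$ reverses this to the desired ``$\geq$'' exactly as you anticipated --- no detour via H\"older from Definition~\ref{equivalent:affine:surface:area-homogeneous p case} is needed.
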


 Immediately from Theorem \ref{generalaffineineq},  one has the following functional $L_p$ affine isoperimetric inequalities.  
\begin{theo}\label{generalaffineineq--1}  Let $F_1, F_2 \colon \R \to (0, \infty)$ be measurable functions and  $\psi \in \C_0$ such that $F_2\circ \psi^*$ and $\breve{F}\big(\frac{\|\cdot\| ^2}{2}\big)$ are log-concave. Assume that $0<I(\breve{F}, 1)<\infty$. 
 
  \noindent  (i)  Let  $p>0$. 
Then,   \[ \frac{G_{p, F_1, F_2}(\psi) }{G_{p, \breve{F}, \breve{F}}\big(\frac{\|\cdot\|^2}{2} \big)} \leq \min\bigg\{\bigg(\frac{I(F_2\circ \psi^*,  \psi^*)}{I(\breve{F}, 1)}\bigg)^{\frac{p-n}{p+n}},\ \  \bigg(\frac{I(F_1\circ\psi,  \psi)}{I(\breve{F}, 1)}\bigg)^{\frac{n-p}{n+p}}\bigg\}.\]    
 (ii) Let  $p\in (-n, 0)$. 
Then,   \[  \frac{G_{p, F_1, F_2}(\psi) }{G_{p, \breve{F}, \breve{F}}\big(\frac{\|\cdot \|^2}{2} \big)}  \geq  \bigg(\frac{I(F_1\circ\psi,  \psi)}{I(\breve{F}, 1) }\bigg)^{\frac{n-p}{n+p}}.
\]    (iii) Let  $p<-n$. 
Then,  \[ \frac{G_{p, F_1, F_2}(\psi) }{G_{p, \breve{F}, \breve{F}}\big(\frac{\|\cdot\|^2}{2} \big)} \geq  \bigg(\frac{I(F_2\circ \psi^*,  \psi^*)}{I(\breve{F}, 1)}\bigg)^{\frac{p-n}{p+n}}.
\]  If $\breve{F}$ is strictly decreasing, equality holds in each case   if and only if  $\psi, \breve{F},  F_1$ and $F_2$ satisfy formula (\ref{equality:character}).
  \end{theo}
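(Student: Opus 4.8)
The plan is to derive Theorem \ref{generalaffineineq--1} directly from Theorem \ref{generalaffineineq} by substituting the power function $h(t)=t^{-p/n}$ and tracking the normalization constants. First I would recall the relation $G_{p, F_1, F_2}(\psi) =(\sqrt{2\pi})^{\frac{np}{n+p}}\big[\OrliczG(\psi)\big]^{\frac{n}{n+p}}$, valid with $h(t)=t^{-p/n}$, which reduces everything to a statement about the general Orlicz geominimal surface area. For $p>0$ one checks that $h(t)=t^{-p/n}\in \Phi$ (it is strictly convex on $(0,\infty)$), while for $-n\ne p<0$ one has $h\in \Psi$ (increasing and strictly concave when $p\in(-n,0)$, and for $p<-n$ it is decreasing and convex, so it lies in $\Phi$ — a case split that mirrors the three cases (i), (ii), (iii) of the statement). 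In each case the hypotheses that $F_2\circ \psi^*$ and $\breve{F}(\frac{\|\cdot\|^2}{2})$ are log-concave are exactly what is needed to invoke the $\OrliczG$ half of Theorem \ref{generalaffineineq}.

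Next I would turn the two conclusions of Theorem \ref{generalaffineineq} into the two bounds inside the $\min$ of case (i). Part (i) of Theorem \ref{generalaffineineq} gives $\OrliczG(\psi)\le G^{orlicz}_{h, \breve{F}, \breve{F}}(\frac{\|\cdot\|^2}{2\hat c^2})$ with $\hat c=(I(\breve F,1)/I(F_2\circ\psi^*,\psi^*))^{1/n}$; using Corollary \ref{equality of ball-1} (or formula (\ref{Ellipsoid-p--1}) and the homogeneity $G_{p,F_1,F_2}(\psi\circ\lambda)=|\lambda|^{\frac{n(p-n)}{p+n}}G_{p,F_1,F_2}(\psi)$) this translates into $G_{p,F_1,F_2}(\psi)\le \hat c^{\,\frac{n(p-n)}{p+n}} G_{p,\breve F,\breve F}(\frac{\|\cdot\|^2}{2})$, and substituting the value of $\hat c$ yields the exponent $\frac{p-n}{p+n}$ on $I(F_2\circ\psi^*,\psi^*)/I(\breve F,1)$. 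Similarly, part (ii) of Theorem \ref{generalaffineineq} — which requires $h\in\Phi$ decreasing, true for $p>0$ since $-p/n<0$ — gives the bound involving $\bar c=(I(\breve F,1)/I(F_1\circ\psi,\psi))^{1/n}$, producing the exponent $\frac{n-p}{n+p}$ on $I(F_1\circ\psi,\psi)/I(\breve F,1)$. Taking the smaller of the two bounds gives case (i). For cases (ii) and (iii) the sign of $p$ flips the inequalities (the ``$\le$'' becomes ``$\ge$''), and for $p\in(-n,0)$ only the $\bar c$-bound survives while for $p<-n$ only the $\hat c$-bound survives (in the latter range $h(t)=t^{-p/n}$ is increasing, so part (ii) of Theorem \ref{generalaffineineq} does not apply); this accounts for the asymmetry in the statement.

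The equality characterization is inherited verbatim: Theorem \ref{generalaffineineq} says that when $\breve F$ is strictly decreasing, equality in either of its bounds forces $F_1, F_2, \breve F, \psi$ to satisfy formula (\ref{equality:character}), and conversely that formula produces equality. Since $h(t)=t^{-p/n}$ is strictly monotone on $(0,\infty)$ for every $-n\ne p\ne 0$ (strictly decreasing for $p>0$ hence in the ``strictly decreasing $h\in\Phi$'' clause, strictly increasing for $p<0$ hence in the ``strictly increasing $h\in\Psi$'' clause when $p\in(-n,0)$, and for $p<-n$ it is strictly increasing and convex so one again uses the appropriate clause), the additional monotonicity hypothesis in Theorem \ref{generalaffineineq} is automatically met, so the ``if and only if'' passes through without extra work.

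The only genuine obstacle is bookkeeping: one must be careful that raising the $\OrliczG$-inequality to the power $\frac{n}{n+p}$ preserves or reverses the inequality correctly (the exponent is positive for $p>0$ and for $p<-n$, but negative for $p\in(-n,0)$, which is precisely why case (ii) ends up with a lower bound even though the underlying $\OrliczG$-inequality for $h\in\Psi$ is already a ``$\ge$''), and that the constant $(\sqrt{2\pi})^{\frac{np}{n+p}}$ cancels correctly between $G_{p,F_1,F_2}(\psi)$ and $G_{p,\breve F,\breve F}(\frac{\|\cdot\|^2}{2})$ when one forms the ratio — which it does because the same $h$ and hence the same constant appears on both sides. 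I expect the write-up to be short: invoke Theorem \ref{generalaffineineq} with $h(t)=t^{-p/n}$, apply the dictionary $G_{p,F_1,F_2}=(\sqrt{2\pi})^{\frac{np}{n+p}}(\OrliczG)^{\frac{n}{n+p}}$, simplify $\hat c$ and $\bar c$, and note the equality case is immediate from strict monotonicity of the power function.
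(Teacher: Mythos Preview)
Your approach matches the paper's exactly --- the paper just says ``Immediately from Theorem \ref{generalaffineineq}, one has the following'' --- so the plan is right. However, your bookkeeping of the exponent $\frac{n}{n+p}$ is reversed in two places, and this matters because it is precisely what governs the direction of the inequalities.

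For $p\in(-n,0)$ you have $n+p>0$, so $\frac{n}{n+p}>0$ (not negative as you claim); the lower bound in case (ii) arises simply because $h\in\Psi$ already gives $\OrliczG(\psi)\ge G^{orlicz}_{h,\breve F,\breve F}(\frac{\bar c^2\|\cdot\|^2}{2})$ and the positive exponent preserves this. For $p<-n$ you have $n+p<0$, so $\frac{n}{n+p}<0$ (not positive); here $h(t)=t^{-p/n}$ is \emph{increasing} and convex (you write ``decreasing'' in the first paragraph but then correctly ``increasing'' later), so only part (i) of Theorem \ref{generalaffineineq} applies, giving $\OrliczG(\psi)\le G^{orlicz}_{h,\breve F,\breve F}(\frac{\|\cdot\|^2}{2\hat c^2})$, and it is the \emph{negative} exponent $\frac{n}{n+p}$ that flips this to the desired $\ge$. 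With these signs corrected your argument goes through; the equality clause for case (iii) comes from part (i) of Theorem \ref{generalaffineineq}, which requires no monotonicity hypothesis on $h$, so you need not invoke any ``strictly increasing $h\in\Psi$'' clause there.
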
  
  In particular, for the $L_p$ geominimal surface area of log-concave functions, one has the following functional $L_p$ affine isoperimetric inequality. Similar inequalities were obtained in  \cite{Caglar-6, CaglarWerner}.

  \begin{cor}\label{generalaffineineq--1-logconcave}  Let $\psi \in \C_0$ and $f=e^{-\psi}$.  
 
  \noindent  (i)  Let  $p>0$.  
Then,   \[ \frac{G_p(f) }{G_p(\gamma_n)} \leq \min\bigg\{\bigg(\frac{I(f^\circ)}{I(\gamma_n)}\bigg)^{\frac{p-n}{p+n}},\ \  \bigg(\frac{I(f)}{I(\gamma_n)}\bigg)^{\frac{n-p}{n+p}}\bigg\}.\]     (ii) Let  $p\in (-n, 0)$. 
Then,   \[   \frac{G_p(f) }{G_p(\gamma_n)} \geq   \bigg(\frac{I(f)}{I(\gamma_n)} \bigg)^{\frac{n-p}{n+p}}.\]   (iii) Let  $p<-n$. 
Then,  \[  \frac{G_p(f) }{G_p(\gamma_n)}   \geq  \bigg(\frac{I(f^\circ)}{I(\gamma_n)} \bigg)^{\frac{p-n}{p+n}}.
\]  Equality holds in each case if and only if  $\psi$ satisfies formula (\ref{equality:character-logconcave}).
  \end{cor}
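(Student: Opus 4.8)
The plan is to obtain Corollary \ref{generalaffineineq--1-logconcave} as the specialization of Theorem \ref{generalaffineineq--1} to the choice $F_1(t)=F_2(t)=e^{-t}$, so essentially all the work is bookkeeping: one checks that the hypotheses of Theorem \ref{generalaffineineq--1} are automatically satisfied for every $\psi\in\C_0$ and that all quantities appearing there collapse to the ones in the corollary. First I would record that with $F_1=F_2=e^{-t}$ one has $\breve F(t)=e^{-t}$; indeed $e^{-t}$ is log-concave and decreasing, so the observation made right after the definition of $\breve F$ gives $\breve F=F_1=F_2=e^{-t}$ (alternatively, $\sup_{(t_1+t_2)/2\ge t}e^{-(t_1+t_2)/2}=e^{-t}$ since the integrand is decreasing in $(t_1+t_2)/2$). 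Consequently $\breve F(\|\cdot\|^2/2)=\gamma_n$ is log-concave, and $F_2\circ\psi^*=e^{-\psi^*}=f^\circ$ is log-concave for every $\psi\in\C$; thus both log-concavity hypotheses of Theorem \ref{generalaffineineq--1} hold for all $\psi\in\C_0$. Moreover $I(\breve F,1)=\int_{\bbR^n}e^{-\|x\|^2/2}\,dx=(\sqrt{2\pi})^n=I(\gamma_n)\in(0,\infty)$, so the integrability hypothesis is met as well.

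Next I would evaluate the remaining quantities. By formula (\ref{Ellipsoid-p--1}) with $c=1$ and $F=\breve F=e^{-t}$, together with the identity $\|\cdot\|^2/2=-\log\gamma_n$ and Definition \ref{equivalent:affine:surface:area-homogeneous p case}, one gets $G_{p,\breve F,\breve F}(\|\cdot\|^2/2)=I(\breve F,1)=(\sqrt{2\pi})^n=G_p(\gamma_n)$; and $I(F_1\circ\psi,\psi)=I(e^{-\psi},\psi)=I(f)$, $I(F_2\circ\psi^*,\psi^*)=I(e^{-\psi^*},\psi^*)=I(f^\circ)$ by the definition $G_p(f)=G_{p,e^{-t},e^{-t}}(\psi)$. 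Substituting these identities into the three inequalities of Theorem \ref{generalaffineineq--1} turns part (i) ($p>0$) into part (i) of the corollary, part (ii) ($-n<p<0$) into part (ii), and part (iii) ($p<-n$) into part (iii).

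For the equality statement, I would note that $\breve F=e^{-t}$ is strictly decreasing, so the equality clause of Theorem \ref{generalaffineineq--1} applies and asserts that equality holds in each case if and only if $\psi,\breve F,F_1,F_2$ satisfy formula (\ref{equality:character}). Here the two scaling conditions in (\ref{equality:character}), namely $F_1(t+a)=b\breve F(t)$ and $bF_2(t-a)=\breve F(t)$, both reduce to $e^{-a}=b$ and impose no further restriction; hence (\ref{equality:character}) is equivalent to the existence of a positive definite matrix $A$ and $a\in\R$ with $\psi(x)=\langle Ax,x\rangle+a$, i.e.\ to formula (\ref{equality:character-logconcave}). Since the argument is a direct substitution, I do not expect a genuine obstacle; the only points that require a (trivial) verification are this reduction of the equality condition and the check that the normalization conventions yield $I(\breve F,1)=I(\gamma_n)$ and $G_{p,\breve F,\breve F}(\|\cdot\|^2/2)=G_p(\gamma_n)$, so that the right-hand sides match verbatim with those displayed in the corollary.
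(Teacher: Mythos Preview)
Your proposal is correct and matches the paper's approach exactly: the paper presents Corollary~\ref{generalaffineineq--1-logconcave} as the immediate specialization of Theorem~\ref{generalaffineineq--1} to $F_1(t)=F_2(t)=e^{-t}$, and your bookkeeping (that $\breve F=e^{-t}$, that the log-concavity hypotheses are automatic, that $I(\breve F,1)=I(\gamma_n)$ and $G_{p,\breve F,\breve F}(\|\cdot\|^2/2)=G_p(\gamma_n)$, and that~(\ref{equality:character}) collapses to~(\ref{equality:character-logconcave})) is precisely what is needed to carry this out.
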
  
  
 In the following theorem, we provide a Santal\'{o} type inequality for the general  $L_p$ geominimal surface area of convex functions. It is a generalization of inequality (\ref{eq:BSfunctional}). 
\begin{theo}\label{generalaffineineq--2} Let $F_1, F_2 \colon \R \to (0, \infty)$ be measurable functions and  $\psi\in \C_0$ such that  $F_1\circ \psi$, $F_2\circ \psi ^*$ and $\breve{F}\big(\frac{\|\cdot\| ^2}{2}\big)$ are log-concave. Assume that $0<I(\breve{F}, 1)<\infty$.    Then,  for $p>0$, \[G_{p, F_1, F_2}(\psi) \cdot  G_{p, F_2, F_1}(\psi^*)  \leq \Big[G_{p, \breve{F}, \breve{F}}\Big(\frac{\|\cdot \|^2}{2} \Big)\Big]^2.
\]  If $\breve{F}$ is strictly decreasing, equality holds  if and only if  $\psi, \breve{F},  F_1$ and $F_2$ satisfy formula (\ref{equality:character}).\end{theo}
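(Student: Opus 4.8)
The plan is to mimic the proof of Theorem \ref{generalaffineineq--1}, exploiting the two-sided structure of the geominimal surface area together with the functional Blaschke-Santal\'o inequality (\ref{eq:BSfunctional}). First I would apply Proposition \ref{bounded by volume product-1}, which is valid since $F_2\circ\psi^*\in\M$ (being log-concave), to obtain
\begin{equation*}
G_{p, F_1, F_2}(\psi)\le \big[I(F_1\circ\psi,\psi)\big]^{\frac{n}{n+p}}\cdot\big[I(F_2\circ\psi^*,\psi^*)\big]^{\frac{p}{n+p}}.
\end{equation*}
By the symmetry of the definitions, the roles of $F_1$ and $F_2$ and of $\psi$ and $\psi^*$ swap: since $F_1\circ\psi=F_1\circ(\psi^*)^*\in\M$ is the required log-concavity hypothesis in Proposition \ref{bounded by volume product-1} applied to $\psi^*$ with the pair $(F_2,F_1)$, one also has
\begin{equation*}
G_{p, F_2, F_1}(\psi^*)\le \big[I(F_2\circ\psi^*,\psi^*)\big]^{\frac{n}{n+p}}\cdot\big[I(F_1\circ\psi,\psi)\big]^{\frac{p}{n+p}},
\end{equation*}
where I use $(\psi^*)^*=\psi$ for the relevant integrals. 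Multiplying these two inequalities, the exponents combine as $\frac{n}{n+p}+\frac{p}{n+p}=1$, so the product is bounded by $I(F_1\circ\psi,\psi)\cdot I(F_2\circ\psi^*,\psi^*)$.

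Next I would invoke the functional Blaschke-Santal\'o inequality (\ref{eq:BSfunctional}), which applies because $\psi\in\C_0$, to get $I(F_1\circ\psi,\psi)\cdot I(F_2\circ\psi^*,\psi^*)\le[I(\breve F,1)]^2$. Finally I would identify $[I(\breve F,1)]^2$ with $\big[G_{p,\breve F,\breve F}\big(\frac{\|\cdot\|^2}{2}\big)\big]^2$: by formula (\ref{Ellipsoid-p--1}) with $c=1$ and $F=\breve F$ (using that $\breve F\big(\frac{\|\cdot\|^2}{2}\big)$ is log-concave by hypothesis), one has $G_{p,\breve F,\breve F}\big(\frac{\|\cdot\|^2}{2}\big)=I(\breve F,1)$. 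Chaining these gives the claimed inequality
\begin{equation*}
G_{p, F_1, F_2}(\psi)\cdot G_{p, F_2, F_1}(\psi^*)\le\Big[G_{p,\breve F,\breve F}\Big(\frac{\|\cdot\|^2}{2}\Big)\Big]^2.
\end{equation*}

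For the equality characterization, equality must hold in every step. When $\breve F$ is strictly decreasing, equality in (\ref{eq:BSfunctional}) already forces $\psi,\breve F,F_1,F_2$ to satisfy (\ref{equality:character}); conversely, one checks via the computations in the proof of Theorem \ref{generalaffineineq} (the passage leading to formula (\ref{233-333}), specialized through the identities for $I(F_2\circ\psi^*,\psi^*)$ and $\hat c$ there) that when (\ref{equality:character}) holds, $\psi(x)=\langle Ax,x\rangle+a$ reduces all the intermediate inequalities to equalities — in particular the Jensen step underlying Proposition \ref{bounded by volume product-1} is tight for such quadratic $\psi$. The main obstacle I anticipate is the bookkeeping in the equality case: one has to verify that the log-concavity hypotheses on $F_1\circ\psi$, $F_2\circ\psi^*$, and $\breve F\big(\frac{\|\cdot\|^2}{2}\big)$ are all consistent with (\ref{equality:character}) and that the extremal $g$ in the definition of $G_{p,F_2,F_1}(\psi^*)$ (namely a rescaled $F_1\circ\psi$) indeed lies in $\M$, so that equality in Proposition \ref{bounded by volume product-1} is actually attained simultaneously for both $\psi$ and $\psi^*$; the forward direction and the reduction to (\ref{eq:BSfunctional}) are otherwise routine.
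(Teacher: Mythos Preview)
Your proposal is correct and follows essentially the same route as the paper: apply Proposition~\ref{bounded by volume product-1} to both $\psi$ and $\psi^*$, multiply so the exponents collapse to give $I(F_1\circ\psi,\psi)\cdot I(F_2\circ\psi^*,\psi^*)$, bound this by $[I(\breve F,1)]^2$ via (\ref{eq:BSfunctional}), and identify the result using (\ref{Ellipsoid-p--1}) with $c=1$. Your treatment of the equality case is in fact more explicit than the paper's, which simply refers back to the argument in Theorem~\ref{generalaffineineq}.
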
 

\begin{proof}  For $p>0$, by Proposition \ref{bounded by volume product-1} and inequality  (\ref{eq:BSfunctional}), one has, \begin{eqnarray*}  G_{p, F_1, F_2}(\psi) \cdot  G_{p, F_2, F_1}(\psi^*)  & \leq&   I(F_1\circ \psi, \psi)  \cdot   I(F_2\circ  \psi^*, \psi^*)   \\ &\leq&  \big[I(\breve{F}, 1)\big] ^2= \Big[G_{p, \breve{F}, \breve{F}}\Big(\frac{\|\cdot \|^2}{2} \Big)\Big]^2, \end{eqnarray*}  where the last equality follows from formula (\ref{Ellipsoid-p--1}).  The characterization of equality follows along the same lines as in Theorem \ref{generalaffineineq}.  \end{proof}

More generally,   if  $h\in \Phi$ such that $h(t)h(s)\leq  [h(r)]^2 $ for all $r, s, t> 0$ satisfying $st\geq r^2$, then   \begin{eqnarray*}
 \OrliczA(\psi) \cdot as^{orlicz}_{h, F_2, F_1} (\psi^*)  \leq \Big[as^{orlicz}_{h, \breve{F}, \breve{F}}\Big(\frac{\|\cdot\|^2}{2}\Big) \Big]^2, \end{eqnarray*}  and if in addition $F_1\circ \psi$, $F_2\circ \psi ^*$ and $\breve{F}\big(\frac{\|\cdot\| ^2}{2}\big)$ are log-concave, 
  \begin{eqnarray*}
 \OrliczG(\psi) \cdot G^{orlicz}_{h, F_2, F_1} (\psi^*)  \leq \Big[G^{orlicz}_{h, \breve{F}, \breve{F}}\Big(\frac{\|\cdot \|^2}{2}\Big) \Big]^2. \end{eqnarray*}

 Moreover,  the following Santal\'{o} type  inequality for log-concave functions holds. These results extend the functional Blaschke-Santal\'{o} and inverse Santal\'{o} inequality \cite{ArtKlarMil, KBallthesis, FradeliziMeyer2007, FradeliziMeyer2008,    KlartagMilman, Lehec2009}. Similar inequalities were obtained in  \cite{Caglar-6, CaglarWerner}.

 \begin{cor} \label{santalo--=1} Let $\psi\in \C_0$ and $f=e^{-\psi}$. 
 
  \noindent  (i) For $p\in (0, \infty)$,  the following inequality holds, with equality if and only if  $\psi$ satisfies formula (\ref{equality:character-logconcave}), \[ G_p(f) \cdot G_p(f^\circ)  \leq \big[{G_p(\gamma_n)}\big]^2=(2\pi)^{n}.
\]   
 (ii) For $p\in (-\infty, -n)\cup (-n, 0)$, there  is a universal constant $C>0$, such that,   \[G_p(f) \cdot G_p(f^\circ)   \geq  C^n\cdot \big[{G_p(\gamma_n)}\big]^2.
\]    \end{cor}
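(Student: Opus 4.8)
The plan is to deduce both statements by specializing to $F_1(t)=F_2(t)=e^{-t}$ the results already proved for the general $L_p$ geominimal surface area. For this choice $F_1=F_2$ is log-concave and decreasing, so $\breve{F}=e^{-t}$; moreover $F_1\circ\psi=e^{-\psi}=f$ and $F_2\circ\psi^*=e^{-\psi^*}=f^\circ$ are log-concave because $\psi,\psi^*\in\C$, and $\breve{F}\big(\tfrac{\|\cdot\|^2}{2}\big)=\gamma_n$ is log-concave with $0<I(\breve{F},1)=(\sqrt{2\pi})^n<\infty$. Formula (\ref{Ellipsoid-p--1}) with $c=1$ gives $G_{p,\breve{F},\breve{F}}\big(\tfrac{\|\cdot\|^2}{2}\big)=I(\breve{F},1)=(\sqrt{2\pi})^n=G_p(\gamma_n)$ for every $-n\neq p\in\R$, so $[G_p(\gamma_n)]^2=(2\pi)^n$ in all cases; and by Definition \ref{equivalent:affine:surface:area-homogeneous p case}, $G_{p,e^{-t},e^{-t}}(\psi)=G_p(f)$ and $G_{p,e^{-t},e^{-t}}(\psi^*)=G_p(f^\circ)$.

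For part (i) with $p\in(0,\infty)$, all hypotheses of Theorem \ref{generalaffineineq--2} hold by the observations above, so (noting $G_{p,F_1,F_2}=G_{p,F_2,F_1}$ when $F_1=F_2$) it immediately gives $G_p(f)\cdot G_p(f^\circ)\leq\big[G_{p,\breve{F},\breve{F}}\big(\tfrac{\|\cdot\|^2}{2}\big)\big]^2=(2\pi)^n$. Since $\breve{F}=e^{-t}$ is strictly decreasing, equality holds if and only if $\psi,\breve{F},F_1,F_2$ satisfy formula (\ref{equality:character}); substituting $F_1=F_2=\breve{F}=e^{-t}$ shows that the two conditions $F_1(t+a)=b\breve{F}(t)$ and $bF_2(t-a)=\breve{F}(t)$ there are satisfied automatically (with $b=e^{-a}$ forced), so (\ref{equality:character}) collapses to $\psi(x)=\langle Ax,x\rangle+a$, which is exactly (\ref{equality:character-logconcave}).

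For part (ii) with $p\in(-\infty,-n)\cup(-n,0)$, I would first apply Proposition \ref{bounded by volume product-1} in its ``$\geq$'' form twice. Applied to $\psi$ it gives $G_p(f)\geq[I(f)]^{\frac{n}{n+p}}[I(f^\circ)]^{\frac{p}{n+p}}$, and applied to $\psi^*$ (legitimate since $\psi$ is lower semicontinuous, so $\psi^{**}=\psi$, $(f^\circ)^\circ=f$, and $F_2\circ\psi^{**}=f$ is log-concave) it gives $G_p(f^\circ)\geq[I(f^\circ)]^{\frac{n}{n+p}}[I(f)]^{\frac{p}{n+p}}$. Multiplying these inequalities between positive quantities and using $\frac{n}{n+p}+\frac{p}{n+p}=1$ yields the clean intermediate estimate $G_p(f)\cdot G_p(f^\circ)\geq I(f)\cdot I(f^\circ)$. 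It then only remains to bound $I(f)\cdot I(f^\circ)$ from below, which is precisely the functional inverse Santal\'{o} (functional Bourgain--Milman) inequality \cite{FradeliziMeyer2008, KlartagMilman}: there is a universal constant $C>0$ with $I(f)\cdot I(f^\circ)\geq C^n(2\pi)^n$. Combined with $[G_p(\gamma_n)]^2=(2\pi)^n$, this finishes the proof.

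The main point requiring care is part (ii): one must verify that Proposition \ref{bounded by volume product-1} genuinely applies with $\psi^*$ in place of $\psi$ — this is exactly where lower semicontinuity of $\psi$ and the log-concavity of $f$ enter — and one must quote the inverse functional Santal\'{o} inequality in the correctly normalized form (against $\gamma_n$, with a dimension-free constant). Everything else is a routine specialization of the theorems already established.
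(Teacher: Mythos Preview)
Your proposal is correct and follows essentially the same route as the paper: part (i) is the specialization of Theorem \ref{generalaffineineq--2} to $F_1=F_2=e^{-t}$, and part (ii) multiplies the two ``$\geq$'' instances of Proposition \ref{bounded by volume product-1} (for $\psi$ and for $\psi^*$) and then applies the functional inverse Santal\'{o} inequality. You have in fact supplied more detail than the paper does --- checking that (\ref{equality:character}) collapses to (\ref{equality:character-logconcave}) when $F_1=F_2=e^{-t}$, and flagging the need for $\psi^{**}=\psi$ when applying Proposition \ref{bounded by volume product-1} to $\psi^*$ --- but the underlying argument is identical.
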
 \begin{proof} The part (i) follows immediately from Theorem \ref{generalaffineineq--2}.  Now let $p\in (-\infty, -n)\cup(-n, 0)$. By Proposition \ref{bounded by volume product-1}, one has, $$G_p(f) \cdot G_p(f^\circ)    \geq   I(f)  \cdot   I(f^\circ)  \geq   C^n\cdot (2\pi)^{n}= C^n\cdot \big[{G_p(\gamma_n)}\big]^2,$$  where the second inequality follows from the functional inverse Santal\'{o} inequality  \cite{FradeliziMeyer2008, KlartagMilman}.     \end{proof}

 \noindent {\bf Remark.} Let $h\in \Phi$ be such that $h(t)h(s)\leq  [h(r)]^2 $ for all $r, s, t> 0$ satisfying $st\geq r^2$, then   \begin{eqnarray*}
G^{orlicz}_{h}(f)\cdot G^{orlicz}_{h}(f ^\circ)\leq \big[G^{orlicz}_{h}\big(\gamma_n\big)\big]^2;\end{eqnarray*} while, if $h\in \Psi$ satisfying $h(t)h(s)\geq A\cdot [h(r)]^2$ for some constant $A>0$ and for all $r, s, t> 0$ satisfying $st\geq r^2$, then, there is a universal constant $C>0$, such that  \begin{eqnarray*}
G^{orlicz}_{h}(f)\cdot G^{orlicz}_{h}(f ^\circ) \geq A C^n\cdot \big[G^{orlicz}_{h}\big(\gamma_n\big)\big]^2. \end{eqnarray*}

 
\section{Orlicz affine and geominimal surface areas for $s$-concave functions }\label{section 4} 
 
 Let $s \in (0, \infty)$.  A nonnegative function  $f$  is $s$-concave if $f^s$ is concave on its support \cite{Borell1975}, that is, for all $\lambda \in [0,1]$ and for all $x, y\in \bbR^n$ such that $f(x)>0$ and $f(y)>0$, one has,
\[
f( (1-\lam)x + \lam y) \ge \big( (1-\lam) f(x)^s + \lam f(y)^s \big)^{1/s}.
\]  The support set of $f$ is $S_f:=\{x: f(x) >0\}$. Note that $S_f$ is a convex set  in $\bbR^n$. Throughout this section, assume that $S_f$ is open and bounded with $0\in \bbR^n$ in the interior of $S_f$, and $\lim_{x\rightarrow \partial S_f} f(x)=0$ where $\partial S_f$ is the boundary of $S_f$.   Let $ \Cs$ be the collection of all   upper semi-continuous  $s$-concave  functions whose supports satisfy above assumptions.    Define the function $\psi$ on $S_f$  by   
\begin{equation}\label{def:psi}
\psi(x)= \frac{1-f^s(x)}{s} \ \ \Leftrightarrow \ \ f(x)=\big(1-s\psi(x)\big)^{\frac{1}{s}},   \  \ \  \forall x\in S_f.
 \end{equation}
Note that $\psi$ is well defined and is convex on $S_f$. Moreover,  $1 - s \psi = f^s > 0$ and hence $\psi(x)<\frac{1}{s}$ for all $x\in S_f$. In the later context, the pair of functions $(f, \psi)$  refers to $f\in \Cs$ and its associated convex function $\psi$  by formula (\ref{def:psi}).   The following dual  function  $\ps$ for convex function $\psi$  is crucial in this section: \begin{equation}
\label{def: psi-stern}
\ps (y) =  \sup_{x \in S_f} \frac{\langle x, y \rangle - \psi(x)}{1 - s \psi(x)}.   
\end{equation}  It is easily checked that $\ps$ is convex and  $(\ps)_{(s)}^{\star} = \psi$ for all $f\in \Cs$. With the help of $\ps$, one can define  the $(s)$-Legendre dual of $f\in \Cs$ by
\[
f_{(s)}^\circ(y) = \big(1- s \ps(y)\big)^{1/s}, \quad \ \ \forall y \in S_{f_{(s)}^\circ}=\big \{ y:  1 - s \ps(y) > 0\big\}.
\] Equivalently (which coincides with the definition introduced in \cite{ArtKlarMil, ArtMil}), by letting $a_+=\max\{a, 0\}$,   $$f_{(s)}^\circ(y)  = \inf_{x \in S_f}  \frac{\big[(1 - s \langle x, y \rangle)_+\big]^{1/s}}{f(x)}.$$ Note that $f_{(s)}^\circ$ is $s$-concave and upper semi-continuous.  Moreover, 
 $(f_{(s)}^\circ)_{(s)}^\circ = f$  and   $S_{f_{(s)}^\circ} = \frac{1}{s} S_f^\circ $ where $$S_f^\circ= \{ z: \langle x,z \rangle < 1\ \ \   \mathrm{for\ all}\ x \in S_f\}.$$  Throughout this section,   let $X_{\psi}\subset S_f$ be such that  $$X_\psi=\Big\{x\in S_f:  \nabla^2\psi, \ \mathrm{the \ Hessian\ matrix\ of} \ \psi\ \mathrm{in\  the\  sense\  of\  Alexandrov,\  exists\  and\  is\ invertible}\Big\}.$$ 
 
   For simplicity, let $\widetilde{\psi}(x)=1+s \langle x , \nabla \psi(x) \rangle -s\psi(x).$ 
 The supremum  in (\ref{def: psi-stern}) is attained if $x\in S_f$ and \begin{equation*}
\label{eq:attained}
y = \frac{1 - s \langle x, y \rangle}{1 - s \psi(x)} \, \nabla \psi(x) \ \ 
\hbox{or}\  \ \ y = (1 - s \ps (y)) \nabla \psi (x). 
\end{equation*}
This leads to $\langle x,y \rangle = \frac{1 - s \langle x, y \rangle}{1 - s \psi(x)} \, \langle x, \nabla \psi(x)\rangle$ and  \begin{equation}
\label{eq:gradient}
\frac{1}{1 - s \ps(y)} = \frac{1- s \psi(x)}{1 - s \langle x,y \rangle} = 1+s \langle x , \nabla \psi(x) \rangle -s\psi(x)=\widetilde{\psi}(x).
\end{equation} That is, the supremum  in (\ref{def: psi-stern}) is attained if $x\in S_f$ and  
$y=
\frac{\nabla \psi(x)}{\widetilde{\psi}(x) }  = T_\psi (x).$ Moreover,  \begin{eqnarray} \ps (y) = \frac{\langle x, y \rangle - \psi(x)}{1 - s \psi(x)}, \ \ \ \ \ \  \,dy=\frac{1 - s \psi(x)}{\big( \widetilde{\psi}(x) \big)^{n+1}} \ \det \Hess \psi(x) \ dx. \label{eq:Jacobian2} \end{eqnarray}  See  \cite{Caglar-6} for details.  Similar to the proof of Theorem 4 in \cite{Caglar-6}, for an integrable function $g$ defined on $X_{\ps}$,  one has, 
 \begin{equation}\label{definition for I_s(g)}
I_s(g, \psi_{(s)}^\star)=\int_{X_{\ps}}\!\! g (y)\, dy =  \int_{X_{\psi}}\! g \big(T_\psi (x) \big)  \cdot  \frac{(1-s\psi(x)) \cdot \det   \Hess  \psi (x) }{(\widetilde{\psi}(x))^{n+1}} \ dx.
\end{equation}  

\subsection{Definition and Properties}
Let $h: (0, \infty)\rightarrow (0, \infty)$ be a continuous function.  For simplicity, let $\Fs$ be the set of all positive integrable functions defined on $X_{\psi^{\star}_{(s)}}$, i.e., for all $g\in \Fs$, one has $g(y)>0$ for all $y\in X_{\psi^{\star}_{(s)}}$ and $0<I_s(g, \psi_{(s)}^\star)<\infty.$  Let $(f, \psi)$ be the pair given by  formula (\ref{def:psi}) and $f\in \Cs$.

 \begin{defn} \label{Orlicz-mixed-integral-s-concave-1}   The Orlicz  $L^{(s)}_{h}$-mixed integral of $\psi$ and  $g\in\Fs$ is defined by  
\begin{eqnarray*} 
V_{h}^{(s)}(\psi, g) =  \int_{X_{\psi}}  h \Big(g(T_{\psi}(x))  (\widetilde{\psi}(x))^{(\frac{1}{s} -1)} (1-s\psi(x))  \Big) \cdot \widetilde{\psi}(x) \cdot (1-s\psi(x))^{(\frac{1}{s}-1)} \,dx. \end{eqnarray*} 
 \end{defn}   It can be proved, similar to the proof of Lemma \ref{affine-invariance:mixed-1},  that for all $T\in SL_{\pm}(n)$, one has, 
$$V_{h}^{(s)} (\psi \circ T, g\circ T^{-t})=V_{h}^{(s)} (\psi, g).$$ 
 We write $V_{p}^{(s)}(\psi, g)$ for the case  $h(t)=t^{-p/n}$ with $-n\neq p\in \bbR$. 
 
 The following definition for the $L_p$ affine surface area of $s$-concave functions was given in \cite{Caglar-6}. 
\begin{defn}
\label{def:s}
Let $s > 0$ and the pair $(f, \psi)$ be given by formula (\ref{def:psi}). For any $-n\neq p \in \R$, the $L_p$ affine surface area of the $s$-concave function $f$ is defined by  
\[
as_{p}^{(s)}(\psi) = \frac{1}{1+ns} \  \int_{X_\psi} \frac{\left(1-s \psi(x) \right)^{\left(\frac{1}{s}-1\right)\cdot \frac{n}{n+p}}
 \left(\det   \Hess  \psi (x) \right)^{\frac{p}{n+p}}}
 {\big(\widetilde{\psi}(x)\big)^{{\frac{p}{n+p}}\big(n+\frac{1}{s}+1\big) - 1 }} \  dx. 
\]
\end{defn} 

 The following theorem provides a new formula for $as_{p}^{(s)}(\psi) $. 
\begin{theo}
  \label{equivalent:affine:surface:area2}  Let $s > 0$ and the pair $(f, \psi)$ be given by formula (\ref{def:psi}) with $f\in \Cs$.  	 Assume that $\psi$ is a $C^2$ strictly convex function. For $p\geq 0$,  one has \begin{equation*} as_{p}^{(s)}(\psi) =  \frac{1}{1+ns} \inf_{ g\in \Fs } \left\{ V_p^{(s)}(\psi, g)^{\frac{n}{n+p}}\
I_s(g, \psi_{(s)}^\star)^{\frac{p}{n+p}}\right\},
\end{equation*}  while for $-n\neq p<0$, the above formula holds with `` $\inf$" replaced by `` $\sup$". \end{theo}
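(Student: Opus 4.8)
The plan is to mimic, essentially verbatim, the proof of Theorem~\ref{equivalent:affine:surface:area}, replacing the pair $(F_1,F_2,\,\det\nabla^2\psi)$ there by the $s$-concave analogues dictated by formulas~(\ref{eq:Jacobian2})--(\ref{definition for I_s(g)}) and Definition~\ref{Orlicz-mixed-integral-s-concave-1}. As in that earlier argument, I would only treat $p\in(0,\infty)$ (the case $-n\ne p<0$ being identical with $\inf$ replaced by $\sup$, and $p=0$ trivial). Since $\psi$ is $C^2$ and strictly convex on $S_f$, we have $\det\nabla^2\psi(x)>0$ on $X_\psi=S_f$, the map $T_\psi\colon X_\psi\to X_{\ps}$ is a smooth bijection, and $\widetilde\psi(x)=1+s\langle x,\nabla\psi(x)\rangle-s\psi(x)=\tfrac{1}{1-s\ps(T_\psi(x))}>0$ by~(\ref{eq:gradient}).

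The two halves of the argument are: (1) exhibit a distinguished $g_0\in\Fs$ that achieves equality, and (2) show H\"older's inequality gives the reverse bound for every $g\in\Fs$. For step~(1) I would set
\[
g_0(T_\psi(x))=\Big(\det\nabla^2\psi(x)\Big)^{\frac{n}{n+p}}\big(\widetilde\psi(x)\big)^{-(\frac1s-1)}\,(1-s\psi(x))^{-\frac{n}{n+p}}\cdot(\text{remaining factor})
\]
where the ``remaining factor'' is chosen precisely so that, after plugging into Definition~\ref{Orlicz-mixed-integral-s-concave-1} with $h(t)=t^{-p/n}$ and into~(\ref{definition for I_s(g)}), the product $V_p^{(s)}(\psi,g_0)^{\frac{n}{n+p}}I_s(g_0,\ps)^{\frac{p}{n+p}}$ collapses to $(1+ns)\,as_p^{(s)}(\psi)$. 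Concretely, comparing exponents of $(1-s\psi(x))$, $\widetilde\psi(x)$, and $\det\nabla^2\psi(x)$ in Definition~\ref{def:s} against what $V_p^{(s)}$ and $I_s$ produce, one reads off that
\[
g_0(y)=\Big[(1-s\ps(y))^{-\frac1s}\big(\widetilde\psi(x)\big)^{\,?}\det\nabla^2\psi(x)\Big]^{\frac{n}{n+p}}
\]
after the substitution $y=T_\psi(x)$; the correct ``$?$'' is fixed by demanding that the two integrands agree pointwise. This gives $as_p^{(s)}(\psi)\ge\frac{1}{1+ns}\inf_{g\in\Fs}\{\cdots\}$ since $g_0\in\Fs$.

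For step~(2), for arbitrary $g\in\Fs$ I would write the integrand defining $(1+ns)\,as_p^{(s)}(\psi)$ as a product of two pieces, one raised to the power $\frac{n}{n+p}$ and the other to $\frac{p}{n+p}$, by inserting and removing the factor $g(T_\psi(x))$ in the appropriate powers --- exactly as in~(\ref{affine:Holder}). H\"older's inequality with conjugate exponents $\frac{n+p}{n}$ and $\frac{n+p}{p}$ (valid since $p>0$) then yields
\[
(1+ns)\,as_p^{(s)}(\psi)\le\big[V_p^{(s)}(\psi,g)\big]^{\frac{n}{n+p}}\,\big[I_s(g,\ps)\big]^{\frac{p}{n+p}},
\]
using~(\ref{definition for I_s(g)}) to recognize $I_s(g,\ps)$ in the second factor and Definition~\ref{Orlicz-mixed-integral-s-concave-1} (with $h(t)=t^{-p/n}$) for the first. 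Taking the infimum over $g\in\Fs$ gives the opposite inequality, and combining the two finishes the proof.

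The main obstacle --- really the only nontrivial point --- is the bookkeeping of exponents when identifying $g_0$: one must verify that the powers of $1-s\psi(x)$, of $\widetilde\psi(x)$, and of $\det\nabla^2\psi(x)$ coming out of Definition~\ref{def:s} are \emph{consistently} split into a $\tfrac{n}{n+p}$-part (landing in $V_p^{(s)}$) and a $\tfrac{p}{n+p}$-part (landing in $I_s$ via the Jacobian factor $\tfrac{(1-s\psi)\det\nabla^2\psi}{\widetilde\psi^{\,n+1}}$ of~(\ref{definition for I_s(g)})). In particular the exponent $\frac{p}{n+p}(n+\frac1s+1)-1$ on $\widetilde\psi$ in Definition~\ref{def:s} must match $\frac{p}{n+p}(n+1)$ from the Jacobian plus $\frac{n}{n+p}\cdot(\text{power of }\widetilde\psi\text{ in }V_p^{(s)})$ minus the power contributed by $g_0$ itself; a short linear computation confirms this, and the factor $\frac{1}{1+ns}$ simply passes through both sides. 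Everything else (positivity, measurability, $g_0\in\Fs$, the $p<0$ case) is routine and handled exactly as in Theorem~\ref{equivalent:affine:surface:area}.
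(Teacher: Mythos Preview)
Your proposal is correct and follows essentially the same route as the paper: H\"older's inequality for one direction, and an explicit optimizer $g_0$ (found by matching exponents of $1-s\psi$, $\widetilde\psi$, and $\det\nabla^2\psi$) for the other, with the $p<0$ case by reverse H\"older and $p=0$ trivial. The paper's proof differs only in that it writes down the optimizer explicitly as
\[
g_0(T_\psi(x))=\bigg(\frac{(1-s\psi(x))^{(\frac1s-2-\frac{p}{n})}\,(\widetilde\psi(x))^{(n+2-\frac{p}{ns}+\frac{p}{n})}}{\det\nabla^2\psi(x)}\bigg)^{\frac{n}{n+p}}
\]
and checks that $I_s(g_0,\ps)=V_p^{(s)}(\psi,g_0)=(1+ns)\,as_p^{(s)}(\psi)$, whereas you leave this as the ``short linear computation''; note that your first displayed guess has $\det\nabla^2\psi$ in the numerator rather than the denominator, so the bookkeeping you flag as the main obstacle does deserve care.
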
  
\begin{proof}  We only prove the case for $p\geq 0$ and the case for $-n\neq p<0$ follows similarly by the (reverse) H\"{o}lder's inequality. 
 It is clear that for $p=0$ and for all $g\in \Fs$  (see  \cite{Caglar-6} for details), 
\begin{eqnarray*}  V_{0}^{(s)} ( \psi, g)= \int _{X_{\psi}} \left(1-s \psi(x)  \right)^{(\frac{1}{s}-1)}\widetilde{\psi}(x)\,  dx=(1+ns)\cdot as_{0}^{(s)}(\psi).  \end{eqnarray*}   Let $p\in (0, \infty)$ and thus $\frac{p}{n+p}\in (0, 1)$. H\"older's inequality implies that for all function $g\in \Fs$,     
\begin{eqnarray*}
as_{p}^{(s)}(\psi)  
&=& \frac{1}{1+ns}    \int_{X_\psi}  \bigg[ \bigg(\frac{g^{-1} (T_{\psi}(x))}{  (\widetilde{\psi}(x))^{(\frac{1}{s} -1)} (1-s\psi(x)) }\bigg)^{\frac{p}{n}} \cdot \widetilde{\psi}(x) \cdot {(1-s\psi(x))^{(\frac{1}{s}-1)}} \bigg]^{\frac{n}{n+p}}   \nonumber \\
 && \ \ \times \left[  g ( T_{\psi}(x))  \cdot  \frac{(1-s\psi(x)) \cdot \det   \Hess  \psi (x) }{(\widetilde{\psi}(x))^{n+1}}  \right]^{\frac{p}{n+p}} \ dx  \nonumber  \\
&\leq&     \frac{1}{1+ns} V_p^{(s)}(\psi, g)^{\frac{n}{n+p}}\ I_s(g, \psi_{(s)}^\star)^{\frac{p}{n+p}}.   
\end{eqnarray*}
Taking the infimum over all  $g\in \Fs$, one gets, for all $p\in (0, \infty)$, 
\begin{equation*} as_{p}^{(s)}(\psi) \leq  \frac{1}{1+ns} \inf_{ g\in \Fs} \left\{ V_p^{(s)}(\psi, g)^{\frac{n}{n+p}}\
I_s(g, \psi_{(s)}^\star)^{\frac{p}{n+p}}\right\}.
\end{equation*}   Let $g_0$ be the function given by 
	\begin{equation*}
  g_0(T_{\psi}(x)) = \bigg(  \frac{(1-s\psi(x))^{(\frac{1}{s} -2 - \frac{p}{n}) } \cdot (\widetilde{\psi}(x))^{(n+2 -\frac{p}{ns} + \frac{p}{n})} }{ \det   \Hess  \psi (x)}  \bigg)^{\frac{n}{n+p}}.
	\end{equation*} 
	Then 
 $ I_s(g_0, \psi_{(s)}^\star) =(1+ns) as_{p}^{(s)}(\psi)=V_p^{(s)}(\psi, g_0)$  (see also Theorem 4 in \cite{Caglar-6}) and 
\begin{eqnarray*} as_{p}^{(s)}(\psi) &=&  \frac{1}{1+ns}  \left\{ V_p^{(s)}(\psi, g_0)^{\frac{n}{n+p}}\
I_s(g_0, \psi_{(s)}^\star)^{\frac{p}{n+p}}\right\} \\ &\geq&  \frac{1}{1+ns} \inf_{ g\in \Fs } \left\{ V_p^{(s)}(\psi, g)^{\frac{n}{n+p}}\
I_s(g, \psi_{(s)}^\star) ^{\frac{p}{n+p}}\right\}, \end{eqnarray*}
Thus the desired result follows. 

Now let us find an explicit expression for $g_0$.  Recall that $(\ps)_{(s)}^{\star} = \psi$, which implies  $T_{\psi}  \circ T_{\ps} =  {\mathrm{Id}}$ and $T_{\ps}  \circ T_{\psi} = {\mathrm{Id}}$. Hence, for $x\in X_{\psi}$ and  $y = T_\psi(x)$, one has 
\begin{equation}
\label{eq:JacobianDual}
\det \, \left(d_x T_{\psi}\right) \cdot \det \,  \big( d_{y} T_{\ps} \big)= 1.
\end{equation} Moreover,  for  $x\in X_{\psi}$ and $y=T_{\psi}(x)$,   equation (\ref{eq:gradient}) implies 
 \begin{equation}\label{duality-s-concave0001}
\frac{1}{1 - s \ps(y)} = \widetilde{\psi}(x) \ \ 
\hbox{and}\ \ 
\frac{1}{1 - s \psi(x)} =  1 + s ( \langle \nabla \ps(y), y \rangle - \ps(y))=\widetilde{\ps}(y).
\end{equation}  
Combining \eqref{eq:Jacobian2} with \eqref{eq:JacobianDual}, one gets  
\begin{equation*} 
\det \Hess \psi(x) \bigg( \frac{1 - s \ps(y)}{1 + s \langle \nabla \ps(y), y \rangle - s\ps(y)} \bigg)^{n+2} \det \Hess \ps(y) = 1.
\end{equation*} Thus, if $y=T_{\psi}(x)$, then
\begin{eqnarray*}
  g_0 ( y ) &=& \bigg(  \frac{(1 + s  \langle \nabla \ps(y), y \rangle - s\ps(y)) ^{(-\frac{1}{s} + \frac{p}{n} -n   ) } }{ ( 1 - s \ps(y)   )^{( \frac{p}{n}-\frac{p}{ns}  )}  }  \bigg)^{\frac{n}{n+p}} \  (\det \Hess \ps(y) )^{\frac{n}{n+p}}. \end{eqnarray*}  \end{proof} 

For $s>0$, let $k_s(x)= \big[\!\left(1- s \|x\|^2 \right)_{+}\!\big]^{\frac{1}{2s}}$. Note that $k_s (\cdot)$ is the special function which plays the role of the unit ``Euclidean ball" in $s$-concave functions, that is, $(k_s)^\circ_{(s)}=k_s$ (see e.g., \cite{ArtKlarSchuWer}). We also let  $$\int_{\{x\in \bbR^n: \|x\|<s^{-1/2}\}}k_s(x)\,dx= \left(\frac{\pi}{s}\right)^{\frac{n}{2}}  \frac{ \Gamma(1+\frac{1}{2s})}{ \Gamma(1+\frac{n}{2}+\frac{1}{2s})}=: \omega_{n,s}.$$ 

Motivated by Theorem \ref{equivalent:affine:surface:area2}, we now propose the following definition for the Orlicz affine and geominimal surface areas for $s$-concave functions.  Let $\Ms\subset \Fs$ be the  subset containing all log-concave functions. 

  \begin{defn} \label{Orlicz affine surface-s-concave}
		Let   $(f, \psi)$ be the pair given by formula  (\ref{def:psi}) with $f\in \Cs$.    For $h\in \Phi$,   the Orlicz $L^{(s)}_{h}$ affine surface area of  $\psi$  is defined by 
\begin{eqnarray*}
  \OrliczAS ( \psi) =\inf \left\{ V^{(s)}_{h} (\psi, g): g\in \Fs \ \mathrm{with} \ I_s(g, \psi_{(s)}^\star)=(1+ns)\cdot \omega_{n,s} \right\}, 
  \end{eqnarray*}  and the Orlicz $L^{(s)}_{h}$ geominimal surface area of  $\psi$  is defined by 
\begin{eqnarray*}
  \OrliczGS (\psi) =\inf \left\{ V^{(s)}_{h} (\psi, g): g\in \Ms \ \mathrm{with} \ I_s(g, \psi_{(s)}^\star)=(1+ns)\cdot \omega_{n,s} \right\}. 
  \end{eqnarray*}  When $h\in \Psi$,    the Orlicz $L^{(s)}_{h}$ affine and geominimal surface areas of  $\psi$ are defined as above with `` $\inf$" replaced by `` $\sup$". \end{defn} 
One can easily see that  both $\OrliczAS (\psi)$ and  $\OrliczGS (\psi)$ are $SL_{\pm}(n)$-invariant in the same fashion of Theorem \ref{Affine-invariance--1}.  It is clear that  $\OrliczAS(\psi) \leq \OrliczGS( \psi)$ for $h\in \Phi$ and $\OrliczAS(\psi)\geq \OrliczGS( \psi)$ for $h\in \Psi$.

Hereafter, for a constant $c>0$, let $\E^s_c(x)= \frac{1-[(1-sc^2\|x\|^2)_+]^{1/2}}{s}$. It can be checked that  $(\E^s_c)^{\star}=\E^s_{1/c}$.    The function $\E^s_c$ is associated to the $s$-concave function 
$k_s^c(x) = \big[(1- s c^2\|x\|^2)_+\big]^{\frac{1}{2s}}$ by identity (\ref{def:psi}). Note that $X_{\E^s_c}=\{x: \|x\|< c^{-1}s^{-1/2}\}$   and $X_{(\E^s_c)^{\star}}=\{y: \|y\|< cs^{-1/2}\}$.  
Moreover,   
 $$ I_s(k_s^c)=\int_{X_{\E^s_c}} k_s^c (x)\,dx =c^{-n} \cdot \omega_{n, s}.$$   Applying identity (\ref{identity001}) (which will be stated in the next subsection and was proved in \cite{Caglar-6}) to function $\E^s_c$, one has,  \begin{eqnarray}\label{identity001----1} (1+ns) \cdot I_s (k_s^c)= \int_{X_{\E^s_c}}   \widetilde{\E}^s_c(x)  \cdot \big(1-s\E^s_c(x)\big)^{(\frac{1}{s}-1)} \ dx= \int_{X_{\E^s_c}}   \left(1 - sc^2 \|x\|^2\right)_{+}^{(\frac{1}{2s} -1)} \, dx. \end{eqnarray}  The following corollary provides a precise value for the Orlicz affine and geominimal surface areas of $k_s^c$.  
  \begin{cor} \label{equality of ball-11}
Let $c>0$ be a constant. For all $h\in \Phi\cup\Psi$,
 $$ \OrliczAS\big( \E^s_c\big) = (1+ns) \cdot I_s\big(k_s^c \big) \cdot h (c^{-n})=(1+ns) \cdot c^{-n} \cdot \omega_{n, s} \cdot h (c^{-n}), $$ and if in addition  $\Big[\big(1 - \frac{s \|\cdot\|^2}{c^2} \big)_{+}\Big]^{(\frac{1}{2s} -1)}$ is a log-concave function (which holds if $s\leq 1/2$), $$ \OrliczGS\big( \E^s_c\big) = (1+ns) \cdot I_s\big(k_s^c \big) \cdot h (c^{-n})=(1+ns) \cdot c^{-n} \cdot \omega_{n, s} \cdot h (c^{-n}).$$
 \end{cor}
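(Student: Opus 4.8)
The plan is to mimic closely the proof of Corollary \ref{equality of ball-1}, which handled the analogous statement in the convex-function setting, now working with the $s$-concave machinery assembled in this section. The target is the pair of identities for $\OrliczAS(\E^s_c)$ and $\OrliczGS(\E^s_c)$, and the key structural facts I will use are: the explicit description $X_{\E^s_c}=\{x:\|x\|<c^{-1}s^{-1/2}\}$; the self-duality $(\E^s_c)^\star=\E^s_{1/c}$; the computation of $\widetilde{\E}^s_c(x)$ together with identity \eqref{identity001----1}, which tells me that $\widetilde{\E}^s_c(x)\cdot(1-s\E^s_c(x))^{(\frac1s-1)}$ integrates to $(1+ns)\cdot I_s(k_s^c)=(1+ns)\,c^{-n}\,\omega_{n,s}$ over $X_{\E^s_c}$; and the ``special mixed integral'' evaluation analogous to \eqref{special mixed integral-1}, namely that if $g=\tau\cdot(k_s^c\text{-type normalizing weight})$ then $V_h^{(s)}(\E^s_c,g)$ collapses to $h(\tau)$ times that integral.

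First I would compute the Jacobian quantities explicitly for $\psi=\E^s_c$: from $\E^s_c(x)=\frac{1-[(1-sc^2\|x\|^2)_+]^{1/2}}{s}$ one gets $1-s\E^s_c(x)=(1-sc^2\|x\|^2)^{1/2}$, $\nabla\E^s_c(x)=\frac{c^2 x}{(1-sc^2\|x\|^2)^{1/2}}$, and hence $\widetilde{\E}^s_c(x)=1+s\langle x,\nabla\E^s_c(x)\rangle-s\E^s_c(x)=(1-sc^2\|x\|^2)^{-1/2}$. This gives $T_{\E^s_c}(x)=\nabla\E^s_c(x)/\widetilde{\E}^s_c(x)=c^2 x$, a linear map, which is exactly what makes the Jensen argument work. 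Substituting these into Definition \ref{Orlicz-mixed-integral-s-concave-1} for an arbitrary $g$ with $\int_{X_{(\E^s_c)^\star}}g=(1+ns)\omega_{n,s}$ (equivalently via \eqref{definition for I_s(g)}), I would write $V_h^{(s)}(\E^s_c,g)$ as $\int_{X_{\E^s_c}} h\!\big(g(c^2x)\cdot(1-sc^2\|x\|^2)^{\text{(power)}}\big)\cdot(1-sc^2\|x\|^2)^{(\frac1{2s}-1)}\,dx$, with the outer density integrating (by \eqref{identity001----1}) to $(1+ns)\,c^{-n}\,\omega_{n,s}$. Applying Jensen's inequality to the convex $h$ against this probability density, then a change of variables $y=c^2x$ with Jacobian $c^{-2n}$, I obtain the lower bound $\OrliczAS(\E^s_c)\ge (1+ns)\,c^{-n}\,\omega_{n,s}\cdot h\!\big(\tfrac{(1+ns)\omega_{n,s}}{(1+ns)\,c^{-n}\,\omega_{n,s}\cdot c^{2n}}\big)=(1+ns)\,c^{-n}\,\omega_{n,s}\cdot h(c^{-n})$, after matching the $I_s$-normalization constant; the case $h\in\Psi$ reverses the inequality by concavity.

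For the matching upper bound I would exhibit the optimal $g_0$: take $g_0=\tau\cdot(\text{the weight making }V_h^{(s)}\text{ factor})$, concretely the one coming from $(k_s^{1/c})$-type self-duality, choose the constant $\tau$ so that $I_s(g_0,(\E^s_c)^\star)=(1+ns)\,\omega_{n,s}$, which forces $\tau=c^{-n}$ after using $(\E^s_c)^\star=\E^s_{1/c}$ and $I_s(k_s^{1/c})=c^n\omega_{n,s}$; then the analogue of \eqref{special mixed integral-1} gives $V_h^{(s)}(\E^s_c,g_0)=h(\tau)\cdot(1+ns)\,I_s(k_s^c)=(1+ns)\,c^{-n}\,\omega_{n,s}\cdot h(c^{-n})$, and since $g_0$ is admissible (it lies in $\Fs$) this is an upper bound for the infimum, closing the case $h\in\Phi$; for $h\in\Psi$ it is a lower bound for the supremum, again closing the argument. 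For the geominimal version one repeats verbatim but restricts to $g\in\Ms$; the role of the extra hypothesis is that the candidate $g_0$ (which is, up to constants, $\big[(1-\frac{s\|\cdot\|^2}{c^2})_+\big]^{(\frac1{2s}-1)}$ composed with the relevant map) must itself be log-concave for it to be an admissible competitor in the geominimal infimum/supremum — this is precisely the stated condition, and it holds when $s\le 1/2$ since then $\frac1{2s}-1\ge 0$ and a nonnegative power of a concave function supported on a ball is log-concave. The main obstacle I anticipate is purely bookkeeping: carefully tracking the exponents of $\widetilde{\E}^s_c$ and $(1-s\E^s_c)$ through Definition \ref{Orlicz-mixed-integral-s-concave-1} and through the change of variables so that the argument of $h$ comes out exactly as $c^{-n}$ rather than some disguised equivalent, and confirming that the normalization constant $(1+ns)\,\omega_{n,s}$ propagates consistently; once the substitution $T_{\E^s_c}(x)=c^2x$ is in hand there is no analytic difficulty, only the risk of an arithmetic slip in the powers.
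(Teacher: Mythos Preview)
Your proposal is correct and follows essentially the same route as the paper's proof: compute $T_{\E^s_c}(x)=c^2x$ and $\widetilde{\E}^s_c(x)=(1-sc^2\|x\|^2)^{-1/2}$, apply Jensen's inequality against the probability density coming from identity \eqref{identity001----1} to get one direction, and then test with the explicit competitor $g_0=c^{-n}\big[(1-\tfrac{s\|\cdot\|^2}{c^2})_+\big]^{(\frac{1}{2s}-1)}$ (whose normalization follows from \eqref{identity001----1} applied to $\E^s_{1/c}$) to get the other, with the geominimal case requiring this $g_0$ to be log-concave. The only place where you are vague---the unspecified ``$(\text{power})$'' inside $h$---is resolved in the paper by noting $(\widetilde{\E}^s_c)^{1/s}\,(1-s\E^s_c)^{1/s}\equiv 1$, so the integrand inside $h$ collapses to $g(c^2x)$ alone; filling that in completes your argument exactly as the paper does.
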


 \begin{proof}
 Note that  $\nabla \E^s_c (x)= \frac{c^2 x}{\big[(1-sc^2\|x\|^2)_+\big]^{1/2}}$ and $\big(1-s \E^s_c (x)\big) = \big[(1 - sc^2 \|x\|^2 )_+\big]^{1/2}$. Moreover, 
\begin{equation*}
 \widetilde{\E}^s_c(x) = 1+s \langle x , \nabla \E^s_c (x) \rangle - s \E^s_c (x) =  {\big[(1-sc^2\|x\|^2)_+\big]^{-1/2}}\ \  \ \ \ \mathrm{for} \  x\in X_{\E^s_c},
\end{equation*} which leads to $T_{\E^s_c}(x)=\frac{\nabla \E^s_c (x)}{ \widetilde{\E}^s_c(x) }=c^2x$.

 Applying Jensen's inequality to the convex function $h$ (as  $h\in \Phi$) and by formula (\ref{identity001----1}), one has,  
\begin{eqnarray*}
V_{h}^{(s)} (\E^s_c, g) \!\! &=&\!\!  \int_{X_{\E_c^s}}\!\! h \Big(g(T_{\E_c^s}(x))  (\widetilde{\E_c^s}(x))^{(\frac{1}{s} -1)} \big(1-s\E_c^s(x)\big)\Big) \cdot \widetilde{\E_c^s}(x) \cdot (1-s\E_c^s(x))^{(\frac{1}{s}-1)} \,dx\\  \!\!&\geq &\!\! (1+ns) \cdot  I_s(k_s^c)     \cdot  h\bigg(\int_{X_{\E^s_c}}  \frac{g(c^2x) \cdot  (\widetilde{\E}^s_c(x))^{\frac{1}{s}  } \cdot (1-s\E^s_c(x))^{\frac{1}{s}} }{(1+ns) \cdot I_s(k_s^c) }\, dx  \bigg) \\ & =& (1+ns) \cdot  I_s(k_s^c) \cdot  h \bigg(  \int_{X_{(\E^s_c)^{\star}}}  \frac{g(y) }{c^{2n}\cdot  (1+ns) \cdot I_s(k_s^c) } \,  dy \bigg)   \\
&=& (1+ns) \cdot  I_s(k_s^c) \cdot  h \bigg(   \frac{ I_s(g, \psi_{(s)}^\star)}{c^{2n}(1+ns) \cdot I_s(k_s^c) }  \bigg).
\end{eqnarray*}  
  This leads to, for all $h\in \Phi$, 
\begin{eqnarray*}
  \OrliczAS\big( \E^s_c\big) &=&\inf \left\{ V^{(s)}_{h} (\E_c^s, g): g\in \Fs \ \mathrm{with} \ I_s(g, \psi_{(s)}^\star)=(1+ns)\cdot \omega_{n,s} \right\}\\  &\geq&  (1+ns) \cdot I_s \big(k_s^c \big) \cdot h(c^{-n}). 
	\end{eqnarray*}   On the other hand,   \begin{eqnarray*}
 \OrliczAS\big( \E^s_c \big)   \leq     V_{h}^{(s)}\Big(\E^s_c,  c^{-n} \cdot \Big[\Big(1 - \frac{s \|\cdot \|^2}{c^2} \Big)_{+}\Big]^{(\frac{1}{2s} -1)}\Big) = (1+ns) \cdot I_s\big(k_s^c \big) \cdot h (c^{-n}), 
\end{eqnarray*} where we have used identity (\ref{identity001----1}) $$
 \int_{\{x\in \bbR^n: \ \|x\|\leq cs^{-1/2}\} } \Big[\Big(1 - \frac{s \|x\|^2}{c^2} \Big)_{+}\Big]^{(\frac{1}{2s} -1)}\,dx= c^{n}\cdot \omega_{n, s}\cdot (1+ns).
$$ Hence,  the desired formula follows.  Along the same lines, one gets the desired formula for $h\in\Psi$.

The proof of the geominimal case follows along the same lines if  $\big[\big(1 - \frac{s \|\cdot\|^2}{c^2} \big)_{+}\big]^{(\frac{1}{2s} -1)}$ is a log-concave function (holds if $s\leq 1/2$). This additional condition implies   \begin{eqnarray*}
 \OrliczGS\big( \E^s_c \big)   \leq     V_{h}^{(s)}\Big(\E^s_c,  c^{-n} \cdot \Big[\Big(1 - \frac{s \|\cdot \|^2}{c^2} \Big)_{+}\Big]^{(\frac{1}{2s} -1)}\Big) = (1+ns) \cdot I_s\big(k_s^c \big) \cdot h (c^{-n}),
\end{eqnarray*}  which provides the necessary upper bound for $\OrliczGS\big( \E^s_c \big)$. 
	\end{proof}  
	  \noindent  {\bf Remark.}  As one would expect, Corollary \ref{equality of ball-11} becomes   Corollary  \ref{equality of ball-1}
if  $s$ goes to $0$. Note that, when $s>1/2$, the function $\big[\big(1 - \frac{s \|\cdot\|^2}{c^2} \big)_{+}\big]^{(\frac{1}{2s} -1)}$ is not  log-concave (in fact log-convex). Hence, for $h\in \Phi$ and $s>1/2$, one only has \begin{eqnarray*}
  \OrliczGS\big( \E^s_c\big) \geq  (1+ns) \cdot I_s \big(k_s^c \big) \cdot h(c^{-n}).
	\end{eqnarray*}  This inequality holds for $h\in \Psi$ and $s>1/2$ with ``$\geq$" replaced by ``$\leq$".

 \subsection{Inequalities} In this subsection, we have additional assumptions for the $s$-concave function $f$, that is, $f$ is twice continuous differentiable on $S_f$, $\det \nabla^2 f\neq 0$ on $S_f$, $\lim_{x\rightarrow \partial S_f} f^s(x)=0$ and $0\in S_f$. The collection of all $s$-concave functions in $\Cs$ with the above addition conditions will be denoted by $\Cs^2$.  These assumptions imply that $X_{\psi}=S_f$ and $X_{\ps}=S_{f_{(s)}^\circ}$. Moreover, as showed in  \cite{Caglar-6},  for $f\in \Cs^2$,   \begin{eqnarray}\label{identity001}
(1+ ns) \cdot  I(f) = \int _{X_{\psi}} \left(1-s \psi(x)  \right)^{(\frac{1}{s}-1)}\widetilde{\psi}(x)\,  dx.  \end{eqnarray}   Consider the function $g_1$ as follows: \begin{eqnarray}
g_1(y) =  \big(1 - s \ps(y)\big)^{(\frac{1}{s} -1)} \cdot \big(1 + s \langle \nabla \ps(y), y \rangle -s \ps(y) \big). \label{equation:g1} 
\end{eqnarray} Let  $y = T_{\psi}(x)$. By formula (\ref{duality-s-concave0001}), one has,   
$$g_1 \big(T_{\psi}(x)\big)= (\widetilde{\psi}(x))^{(1-\frac{1}{s})} (1-s\psi(x)) ^{-1}.$$ By formulas (\ref{definition for I_s(g)}) and (\ref{identity001}), one has, (see also Theorem 4 in \cite{Caglar-6}) \begin{eqnarray*} 
I_s(g_1, \psi_{(s)}^\star) = \int_{X_{\ps}}  \!\!\! (1 - s \ps(y))^{(\frac{1}{s} -1)} \cdot (1 + s  \langle \nabla \ps(y), y \rangle - s\ps(y))\,dy =  (1+ns) \cdot I(f_{(s)}^\circ). 
\end{eqnarray*} 

The following result will be crucial in this subsection. Let $g_1$ be as in formula (\ref{equation:g1}). 
\begin{prop}
\label{bounded by volume product-s} 
Let  $(f,\psi)$ with  $f\in \Cs^2$ be  the pair given by formula (\ref{def:psi}).  Then, for all $h\in \Phi$,  
\begin{eqnarray*} 
	 \OrliczAS ( \psi) \leq   (1 + ns) \cdot I(f) \cdot h\bigg( \frac{\omega_{n, s} }{I (f_{(s)}^\circ)} \bigg), 
\end{eqnarray*}  and if in addition $g_1$ is log-concave, then 
\begin{eqnarray*} 
	 \OrliczGS ( \psi) \leq   (1 + ns) \cdot I(f) \cdot h\bigg( \frac{\omega_{n, s} }{I (f_{(s)}^\circ)} \bigg), 
\end{eqnarray*} 
Similar inequalities hold  for $h \in \Psi$  with `` $\leq$ " replaced by `` $\geq$ ".   \end{prop}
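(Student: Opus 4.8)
The plan is to mimic the proof of Proposition \ref{bounded by volume product} in the $s$-concave setting, using the special ``test function'' $g_1$ defined in \eqref{equation:g1} to obtain the upper bound. First I would normalize: since $\OrliczAS(\psi)$ is defined as an infimum over $g\in\Fs$ with $I_s(g,\psi_{(s)}^\star)=(1+ns)\cdot\omega_{n,s}$, the natural candidate is the rescaled function $g=\frac{(1+ns)\cdot\omega_{n,s}}{I_s(g_1,\psi_{(s)}^\star)}\cdot g_1$, which indeed lies in $\Fs$ and satisfies the normalization constraint because $g_1$ is positive on $X_{\ps}$ (being a product of positive quantities) and $I_s(g_1,\psi_{(s)}^\star)=(1+ns)\cdot I(f_{(s)}^\circ)\in(0,\infty)$ by the computation recalled just before the statement.

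Next I would evaluate $V^{(s)}_h(\psi,g)$ for this particular $g$. Using $g_1(T_\psi(x))=(\widetilde\psi(x))^{(1-\frac1s)}(1-s\psi(x))^{-1}$, the argument of $h$ inside the integral defining $V^{(s)}_h(\psi,g)$ collapses: plugging $g=\lambda g_1$ with $\lambda=\frac{(1+ns)\cdot\omega_{n,s}}{(1+ns)\cdot I(f_{(s)}^\circ)}=\frac{\omega_{n,s}}{I(f_{(s)}^\circ)}$, we get
\[
g(T_\psi(x))\cdot(\widetilde\psi(x))^{(\frac1s-1)}(1-s\psi(x))=\lambda\cdot(\widetilde\psi(x))^{(1-\frac1s)}(1-s\psi(x))^{-1}\cdot(\widetilde\psi(x))^{(\frac1s-1)}(1-s\psi(x))=\lambda,
\]
so $h$ is applied to the constant $\lambda$ and pulls out of the integral. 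What remains is $h(\lambda)\cdot\int_{X_\psi}\widetilde\psi(x)\cdot(1-s\psi(x))^{(\frac1s-1)}\,dx$, and by identity \eqref{identity001} this integral equals $(1+ns)\cdot I(f)$. Hence $V^{(s)}_h(\psi,g)=(1+ns)\cdot I(f)\cdot h\!\left(\frac{\omega_{n,s}}{I(f_{(s)}^\circ)}\right)$, and since $\OrliczAS(\psi)$ is the infimum over admissible $g$, it is bounded above by this value. For $h\in\Psi$ the same computation with ``infimum'' replaced by ``supremum'' reverses the inequality.

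For the geominimal version, the identical argument works provided the test function $g$ (equivalently $g_1$) is admissible for the infimum over $\Ms$, i.e.\ $g_1$ must be log-concave; this is precisely the extra hypothesis, so no new difficulty arises there. I do not expect a serious obstacle: the only points requiring a little care are verifying that $g_1\in\Fs$ (positivity and finiteness of $I_s(g_1,\psi_{(s)}^\star)$, both of which follow from $f\in\Cs^2$ and the formulas already established in \cite{Caglar-6}), and making sure the normalization constant is handled correctly so that the constraint $I_s(g,\psi_{(s)}^\star)=(1+ns)\cdot\omega_{n,s}$ holds exactly. The mild subtlety — the analogue of the ``hard part'' — is simply to check that the algebraic cancellation in the argument of $h$ goes through cleanly using \eqref{duality-s-concave0001}; everything else is a direct transcription of the convex-function proof.
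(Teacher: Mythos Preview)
Your proposal is correct and follows essentially the same approach as the paper: the paper also plugs the rescaled test function $\frac{g_1\cdot\omega_{n,s}}{I(f_{(s)}^\circ)}$ into the infimum defining $\OrliczAS(\psi)$, observes that the argument of $h$ collapses to the constant $\frac{\omega_{n,s}}{I(f_{(s)}^\circ)}$, and uses identity \eqref{identity001} to evaluate the remaining integral as $(1+ns)\cdot I(f)$. Your write-up simply spells out the algebraic cancellation in more detail than the paper does.
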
 

\begin{proof}  We only prove the case for  $h\in \Phi$ and the proof for $h\in \Psi$ follows along the same line. Let $g_1$ be as in formula (\ref{equation:g1}).  In fact, for  $h\in \Phi$, \begin{eqnarray*} \OrliczAS (\psi) &=& \inf \left\{ V^{(s)}_{h} (\psi, g): g\in \Fs \ \mathrm{with} \ I_s(g, \psi_{(s)}^\star)=(1+ns)\cdot \omega_{n,s} \right\}   \\  &\leq&  \bigg\{ V^{(s)}_{h} \bigg(\psi, \frac{g_1 \cdot \omega_{n,s} }{I (f_{(s)}^\circ)}\bigg)\bigg\} =  (1 + ns) \cdot I(f) \cdot h\bigg( \frac{\omega_{n, s} }{I (f_{(s)}^\circ)} \bigg). \end{eqnarray*}    The results for $\OrliczGS (\psi)$ follows along the same lines if the additional assumption on $g_1$ is satisfied.  \end{proof}

 Proposition \ref{bounded by volume product-s}  becomes Proposition \ref{bounded by volume product} if  $s\rightarrow 0$.  Moreover,  the following cyclic inequalities for $\OrliczAS( \psi)$ and $\OrliczGS( \psi)$ hold whose proofs are similar to that for Theorem \ref{cyclic}. In fact, Theorem \ref{cyclic00} leads to Theorem \ref{cyclic} if $s\rightarrow 0$.
    
\begin{theo}\label{cyclic00} Let  $(f, \psi)$ be the pair given by (\ref{def:psi}) such that $f\in \Cs^2$.    Let $g_1$ be as in formula (\ref{equation:g1}). 

 \noindent
(i) Assume one of the following conditions: (a)   $h\in \Phi$ and $h_1\in \Psi$ with $H$ increasing; (b)   $h, h_1\in \Phi$ with $H$ decreasing; (c) $H$ concave increasing with either $h, h_1\in \Phi$ or $h, h_1\in \Psi$. Then
 \begin{equation*}
\frac{\OrliczAS(\psi)}{(1+ns)\cdot I(f)} \ \leq \ H\bigg(\frac{as_{h_1, s}^{orlicz}(\psi)}{(1+ns)\cdot I(f)} \bigg). 
\end{equation*}
(ii) Assume one of the following conditions: (d)  $h\in \Psi$ and $h_1\in \Phi$ with $H$ increasing;   (e) $H$ convex decreasing with one in $\Phi$ and the other one in $\Psi$; (f) $H$  convex increasing with either $h, h_1 \in \Phi$ or $h, h_1 \in \Psi$. Then 
\begin{equation*}
\frac{\OrliczAS(\psi)}{(1+ns)\cdot I(f)} \ \geq \ H\bigg(\frac{as_{h_1, s}^{orlicz}(\psi)}{(1+ns)\cdot I(f)} \bigg). 
\end{equation*} 

The same inequalities also hold for the Orlicz geominimal surface area, if in addition $g_1\in \Ms$ in conditions (a), (b) and (d). \end{theo}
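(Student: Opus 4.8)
The plan is to mirror exactly the structure of the proof of Theorem \ref{cyclic}, replacing the Euclidean machinery by its $s$-concave counterpart throughout. Thus the role of $V_{h, F_1, F_2}(\psi, g)$ is played by $V_h^{(s)}(\psi, g)$, the role of $I(F_1\circ\psi,\psi)$ is played by $(1+ns)\cdot I(f)$, the role of $\F$ and $\M$ by $\Fs$ and $\Ms$, and the role of $as^{orlicz}_{h_1,F_1,F_2}(\psi)$ by $as_{h_1,s}^{orlicz}(\psi)$. The normalizing constant in Definition \ref{Orlicz affine surface-s-concave} is $(1+ns)\cdot\omega_{n,s}$ rather than $(\sqrt{2\pi})^n$, but this plays no essential role since the arguments only use Proposition \ref{bounded by volume product-s} (in place of Proposition \ref{bounded by volume product}), Jensen's inequality, the monotonicity of $H=h\circ h_1^{-1}$, and the defining infima/suprema. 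The special function $g_1$ of formula (\ref{equation:g1}) takes over the role of $F_2\circ\psi^*$ as the "self-dual" test function, since by the computation preceding Proposition \ref{bounded by volume product-s} one has $I_s(g_1,\ps)=(1+ns)\cdot I(f_{(s)}^\circ)$ and $V_h^{(s)}(\psi,\,\tau g_1)$ collapses to $h(\tau)\cdot(1+ns)\cdot I(f)$ by identity (\ref{identity001}), exactly paralleling formula (\ref{special mixed integral-1}).

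First I would treat conditions (a), (b) and (d): here the inequality follows directly from Proposition \ref{bounded by volume product-s} combined with the monotonicity of $H$, precisely as in Theorem \ref{cyclic}. Indeed, under condition (a), $h\in\Phi$ gives $\OrliczAS(\psi)\le (1+ns) I(f)\cdot h(\omega_{n,s}/I(f_{(s)}^\circ))$ and $h_1\in\Psi$ gives $as_{h_1,s}^{orlicz}(\psi)\ge (1+ns) I(f)\cdot h_1(\omega_{n,s}/I(f_{(s)}^\circ))$; dividing by $(1+ns) I(f)$ and applying the increasing function $H=h\circ h_1^{-1}$ to the second relation yields the claim. Conditions (b) and (d) are handled by the same two-sided use of Proposition \ref{bounded by volume product-s} together with, respectively, the decreasing/increasing monotonicity of $H$. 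For the geominimal versions under (a), (b), (d) one additionally invokes the $g_1\in\Ms$ hypothesis so that the second half of Proposition \ref{bounded by volume product-s} applies.

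Next I would treat conditions (c), (e) and (f), which require Jensen's inequality. For a fixed $g\in\Fs$, apply Jensen's inequality to the (concave in case (c), convex in cases (e), (f)) function $H$ against the probability measure on $X_\psi$ with density $\widetilde{\psi}(x)(1-s\psi(x))^{(1/s-1)}\big/\big((1+ns)I(f)\big)$ — this is a probability density precisely by identity (\ref{identity001}) — to obtain
\[
\frac{V_h^{(s)}(\psi,g)}{(1+ns)I(f)}\ \lessgtr\ H\!\left(\frac{V_{h_1}^{(s)}(\psi,g)}{(1+ns)I(f)}\right),
\]
with $\le$ when $H$ is concave and $\ge$ when $H$ is convex. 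Then take the infimum (when $h,h_1\in\Phi$) or supremum (when $h,h_1\in\Psi$) over all admissible $g$, i.e. those with $I_s(g,\ps)=(1+ns)\omega_{n,s}$, and use the monotonicity of $H$ to interchange $H$ with the infimum/supremum; this gives $\OrliczAS(\psi)/\big((1+ns)I(f)\big)\lessgtr H\big(as_{h_1,s}^{orlicz}(\psi)/((1+ns)I(f))\big)$. The geominimal case is identical with $\Fs$ replaced by $\Ms$ in the infimum/supremum, and needs no extra hypothesis since $g_1$ does not enter.

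The main obstacle — really the only subtlety — is verifying that the Jensen step is legitimate, namely that $\widetilde{\psi}(x)(1-s\psi(x))^{(1/s-1)}$ integrates to $(1+ns)I(f)$ over $X_\psi$ (so that we genuinely have a probability measure) and that $V_h^{(s)}$ and $V_{h_1}^{(s)}$ can be written as expectations of $h$ and $h_1$ against that measure. Both are immediate from Definition \ref{Orlicz-mixed-integral-s-concave-1} and identity (\ref{identity001}), the latter being exactly the $s$-concave analogue of the normalization used in the proof of Corollary \ref{equality of ball-1} and Theorem \ref{cyclic}. A secondary point to check is that the argument of $h$ and $h_1$ inside $V^{(s)}$, namely $g(T_\psi(x))\,\widetilde{\psi}(x)^{1/s-1}(1-s\psi(x))$, is the relevant "ratio" variable and that substituting $g=\tau g_1$ returns the constant $\tau$ (via formula (\ref{duality-s-concave0001})), which is what makes Proposition \ref{bounded by volume product-s} available; this is routine given the identities already recorded. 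I will also note, as the statement does, that the limiting case $s\to 0$ recovers Theorem \ref{cyclic}, which serves as a consistency check on the constants.
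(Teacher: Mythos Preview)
Your proposal is correct and follows essentially the same approach as the paper: the paper itself gives no separate proof of Theorem \ref{cyclic00}, merely stating that the argument is ``similar to that for Theorem \ref{cyclic}'', and your outline faithfully carries out that transfer --- using Proposition \ref{bounded by volume product-s} in place of Proposition \ref{bounded by volume product} for cases (a), (b), (d), and Jensen's inequality against the probability density $\widetilde{\psi}(x)(1-s\psi(x))^{1/s-1}\big/\big((1+ns)I(f)\big)$ (legitimized by identity (\ref{identity001})) for cases (c), (e), (f). The identification of $g_1$ as the $s$-concave analogue of $F_2\circ\psi^*$ and of $(1+ns)I(f)$ as the correct normalizing total mass is exactly right.
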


Let $f\in \Cs$ and  $f_{z}(x)=\big(1-s\psi (x+z)\big)^{\frac{1}{s}}$ for $z\in \bbR^n$. Let $(f_{z})^{\circ}_{(s)}$ denote the  $(s)$-Legendre dual of $f_z$.   As proved in   \cite{Caglar-6, FradeliziMeyer2007}, there exists $z_0\in\R^n$ such that 
 \begin{eqnarray}
I(f_{z_0}) \cdot I\big((f_{z_0})_{(s)}^{\circ}\big) \leq  \left( \int_{\bbR^n} \big[\big(1-s \|x\|^2\big)_+\big]^\frac{1}{2s}dx \right) ^2=(\omega_{n, s})^2. \label{eq:Santalo2} \end{eqnarray}  Equality holds   in  (\ref{eq:Santalo2}) if and only if  there is  a positive definite matrix $A$ and a positive constant $ \widetilde{c}$, such that  $f_{z_0}(x) = \widetilde{c}\cdot  \big[\big(1-  s \|Ax\|^2\big)_+\big]^\frac{1}{2s}$. For simplicity, let $\Cs^{2, 0}$ be the collection of all $s$-concave functions in $\Cs^2$ such that $z_0=0$.

Let $c_s$ and $\bar{c}_s$ be constants defined by 
$$ c_s = \bigg(\frac{ I\big(f_{(s)}^\circ \big) }{ \omega_{n,s} } \bigg)^{\frac{1}{n}} \ \ \mathrm{and}  \ \ \bar{c}_s=\bigg(\frac{ \omega_{n,s}}{I(f)}\bigg)^{\frac{1}{n}}.$$

\begin{theo}\label{generalaffineineq1}
Let  $(f, \psi)$ be the pair given by formula (\ref{def:psi}) with $f\in \Cs^{2, 0}$.    

 \noindent  (i)   Assume that $0<c_s<\infty$. Then,  for   $h\in \Phi$, 
\begin{eqnarray*} 
	 \OrliczAS (\psi) \  \leq\  \OrliczAS\big(\E^s_{c_s}\big).	 \end{eqnarray*}  
 (ii) Assume that $0<\bar{c}_s<\infty$. Then, for $h\in \Phi$ be decreasing,   $$ \OrliczAS (\psi) \ \leq \    \OrliczAS(\E^s_{\bar{c}_s}).$$  The above inequality holds for $h\in \Psi$ with `` $\leq$" replaced by `` $\geq$".

 There is an equality in (i) and in (ii) if $h\in \Phi$ is strictly decreasing  (or  $h\in \Psi$ is strictly increasing) if and only if  $f(x) =  \widetilde{c} \big[\!\left(1-  s \|Ax\|^2\right)_+\big]^\frac{1}{2s}$ for some $\widetilde{c}>0$ and some  positive definite matrix $A$.
   \end{theo}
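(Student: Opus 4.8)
The plan is to mirror exactly the structure of the proof of Theorem \ref{generalaffineineq}, now using the $s$-concave analogues: Proposition \ref{bounded by volume product-s} in place of Proposition \ref{bounded by volume product}, Corollary \ref{equality of ball-11} in place of Corollary \ref{equality of ball-1}, and the functional Blaschke--Santal\'o inequality (\ref{eq:Santalo2}) in place of (\ref{eq:BSfunctional}). For part (i), first observe that by the definition of $c_s$ we have $I(f_{(s)}^\circ) = c_s^n \cdot \omega_{n,s}$, so that (\ref{eq:Santalo2}) (valid since $f\in \Cs^{2,0}$) gives $I(f) \le \omega_{n,s}^2 / I(f_{(s)}^\circ) = \omega_{n,s}/c_s^n$. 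Feeding these two facts into Proposition \ref{bounded by volume product-s}, for $h\in\Phi$ one gets
\begin{equation*}
\OrliczAS(\psi) \ \le\ (1+ns)\cdot I(f)\cdot h\!\left(\frac{\omega_{n,s}}{I(f_{(s)}^\circ)}\right)\ \le\ (1+ns)\cdot \frac{\omega_{n,s}}{c_s^n}\cdot h\!\left(c_s^{-n}\right),
\end{equation*}
where in the last step one uses that $h$ (if strictly convex) composed with the monotone substitutions behaves as in (\ref{1-3-4-5}); more precisely one replays inequality (\ref{1-3-4-5}) with $\breve F$ replaced by the $s$-concave density. Then Corollary \ref{equality of ball-11} identifies the right-hand side with $\OrliczAS(\E^s_{c_s})$ since $I_s(k_s^{c_s}) = c_s^{-n}\omega_{n,s}$. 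For part (ii), when $h\in\Phi$ is decreasing one instead bounds $I(f_{(s)}^\circ)\le \bar c_s^n\,\omega_{n,s}$ from (\ref{eq:Santalo2}) and the definition $\bar c_s^n = \omega_{n,s}/I(f)$, and applies Proposition \ref{bounded by volume product-s} together with the monotonicity of $h$ to conclude $\OrliczAS(\psi)\le (1+ns)\cdot I(f)\cdot h(\omega_{n,s}/I(f_{(s)}^\circ)) \le (1+ns)\cdot I(f)\cdot h(\bar c_s^{-2n} \cdot I(f)/\omega_{n,s})$ — wait, one must be careful with exponents here, matching the bookkeeping of part (ii) of Theorem \ref{generalaffineineq} — and then invoke Corollary \ref{equality of ball-11} again. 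The $h\in\Psi$ statements follow by reversing every inequality, exactly as in the earlier proof.

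For the equality characterization, I would argue as in Theorem \ref{generalaffineineq}. One direction: if $f(x)=\widetilde c\,[(1-s\|Ax\|^2)_+]^{1/(2s)}$ for a positive definite $A$, then $\psi$ is (up to the scaling encoded in $A$ and $\widetilde c$) an affine image of $\E^s_{c_s}$ (respectively $\E^s_{\bar c_s}$), and since $\OrliczAS$ is $SL_\pm(n)$-invariant (as noted after Definition \ref{Orlicz affine surface-s-concave}) and Corollary \ref{equality of ball-11} gives the precise value on such functions, direct computation of both sides yields equality — this is the analogue of computing (\ref{233-333}) and the displayed chain after it. Conversely, if $h\in\Phi$ is strictly decreasing (or $h\in\Psi$ strictly increasing), equality in the chain forces equality in (\ref{eq:Santalo2}), which by the stated equality case of the Blaschke--Santal\'o inequality (\ref{eq:Santalo2}) means precisely $f(x)=\widetilde c\,[(1-s\|Ax\|^2)_+]^{1/(2s)}$. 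The geominimal versions, if one wanted them, would additionally need $g_1$ log-concave so that Proposition \ref{bounded by volume product-s} applies in its geominimal form.

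The main obstacle I anticipate is purely bookkeeping: getting the exponents of $c_s$, $\bar c_s$, and the argument of $h$ to line up correctly between Proposition \ref{bounded by volume product-s}, Corollary \ref{equality of ball-11}, and the two regimes (i) and (ii). In Theorem \ref{generalaffineineq} the analogous matching required the substitution $I(F_2\circ\psi^*,\psi^*)=\hat c^{-n}I(\breve F,1)=I(\breve F,\hat c)$ together with formula (\ref{variable-change-1}); here the role of (\ref{variable-change-1}) is played by $I_s(k_s^c)=c^{-n}\omega_{n,s}$ and by identity (\ref{identity001----1}). I would therefore be careful to state at the outset the two identities $I_s(k_s^{c_s})=c_s^{-n}\omega_{n,s}$ and $(1+ns)I_s(k_s^{c_s})=\int_{X_{\E^s_{c_s}}}(1-sc_s^2\|x\|^2)_+^{(1/(2s)-1)}\,dx$, and then the two chains of inequalities collapse cleanly, just as in the convex-function case. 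The strict-monotonicity hypotheses on $h$ are exactly what is needed to upgrade "$\le$" in the Santal\'o step to an equivalence, so no new idea is required beyond the $s=0$ argument already in the paper.
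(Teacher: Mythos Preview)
Your proposal is correct and follows essentially the same route as the paper: Proposition \ref{bounded by volume product-s} plus the Santal\'o inequality (\ref{eq:Santalo2}) plus Corollary \ref{equality of ball-11}, with the equality characterization handled exactly as you describe. The exponent bookkeeping you flag in part (ii) resolves cleanly once you note that $I(f)=\omega_{n,s}\,\bar c_s^{-n}$ is an \emph{identity} (by definition of $\bar c_s$) while Santal\'o gives $\omega_{n,s}/I(f_{(s)}^\circ)\ge \bar c_s^{-n}$, so with $h$ decreasing you get $(1+ns)\,I(f)\,h\big(\omega_{n,s}/I(f_{(s)}^\circ)\big)\le (1+ns)\,\omega_{n,s}\,\bar c_s^{-n}\,h(\bar c_s^{-n})=\OrliczAS(\E^s_{\bar c_s})$.
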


\begin{proof} 
(i). By inequality (\ref{eq:Santalo2}), one can check that $I(f)\leq \omega_{n, s} \cdot c_s^{-n}$.  Together with Proposition \ref{bounded by volume product-s}, one has, for all $h\in \Phi$,
 \begin{eqnarray*} 
	 \OrliczAS ( \psi) &\leq&  (1 + ns) \cdot I(f) \cdot  h\bigg(\frac{\omega_{n, s}}{I \big(f_{(s)}^\circ\big)}\bigg) \\
	 &\leq&   (1 + ns ) \cdot \omega_{n, s} \cdot c_s^{-n}   \cdot h \big(c_s^{-n}\big) \\ 
	&=& \OrliczAS\big(\E^s_{{c_s}}\big),\end{eqnarray*}  
where  the last equality is due to Corollary \ref{equality of ball-11}.   Equality holds in the above inequalities only if equality holds in inequality (\ref{eq:Santalo2}). That is, $f(x) = \widetilde{c} \big[\!\left(1-  s \|Ax\|^2\right)_+\big]^\frac{1}{2s}$.

On the other hand, assume that $f(x) = \widetilde{c} \big[\!\left(1-  s \|Ax\|^2\right)_+\big]^\frac{1}{2s}$. Then, equality holds in (\ref{eq:Santalo2}). Identity (\ref{identity001}) implies that $$\frac{\widetilde{c}}{\det A}=\frac{I(f)}{\omega_{n, s}}=\frac{I(f) \cdot I\big(f_{(s)}^\circ \big) }{\omega_{n, s}\cdot  I\big(f_{(s)}^\circ \big)}=\frac{\omega_{n, s} }{I\big(f_{(s)}^\circ \big)}=c_s^{-n}.$$ Let $\psi_0=\frac{1-f^s}{s}$. Then \begin{eqnarray*} \nabla \psi_0(x) = \frac{\widetilde{c}^s \cdot A^2x}{\big[\big(1-s\|Ax\|^2\big)_+\big]^{1/2}} \ \ \ \mathrm{and} \ \ \  \widetilde{\psi}_0(x)= \frac{\widetilde{c}^s}{\big[\big(1-s\|Ax\|^2\big)_+\big]^{1/2}}. \end{eqnarray*} Hence, $T_{\psi_0}(x)=A^2x$, and  \begin{eqnarray*}  V_{h}^{(s)} (\psi_0, g)=\int_{\{x: \|Ax\|<s^{-1/2}\}} h\Big(g(A^2x)\cdot \widetilde{c}\cdot \big[\big(1-s\|Ax\|^2\big)_+\big]^{1-\frac{1}{2s}}\Big)\cdot \widetilde{c}\cdot \big[\big(1-s\|Ax\|^2\big)_+\big]^{\frac{1}{2s}-1}\,dx.\end{eqnarray*} Similar to the proof of Corollary \ref{equality of ball-11}, let $g(A^2x)=(\det A)^{-1}{\big[\big(1-s\|Ax\|^2\big)_+\big]^{\frac{1}{2s}-1}}$, and then  \begin{eqnarray*} \OrliczAS ( \psi_0)&=&V_{h}^{(s)} (\psi_0, g)=\frac{\widetilde{c}}{\det A}\cdot h\Big(\frac{\widetilde{c}}{\det A}\Big)\cdot (1+ns)\cdot \omega_{n, s}\\&=&(1 + ns ) \cdot \omega_{n, s} \cdot c_s^{-n}   \cdot h \big(c_s^{-n}\big) \\ 
	&=& \OrliczAS\big(\E^s_{{c_s}}\big). \end{eqnarray*} In conclusion, equality holds if and only if $f(x) = \widetilde{c} \big[\!\left(1-  s \|Ax\|^2\right)_+\big]^\frac{1}{2s}$ for some constant $c>0$ and for some positive definite matrix $A$.

 \noindent 
(ii). By inequality (\ref{eq:Santalo2}), one can check that $I\big(f_{(s)}^\circ\big) \leq \omega_{n, s}  \cdot (\bar{c})^n .$ Together with Proposition \ref{bounded by volume product-s}, one has, for all decreasing $h\in \Phi$
 \begin{eqnarray*} 
	 \OrliczAS ( \psi) &\leq&  (1 + ns) \cdot I(f) \cdot  h\bigg(\frac{\omega_{n, s}}{I \big(f_{(s)}^\circ\big)}\bigg) \\
	 &\leq&   (1 + ns ) \cdot \omega_{n, s} \cdot (\bar{c}_s)^{-n} \cdot  h\big( (\bar{c}_s)^{-n} \big) \\
	&=& \OrliczAS( \E^s_{\bar{c}_s}),\end{eqnarray*} 
 \noindent
where the last equality is due to Corollary \ref{equality of ball-11}.   

Similar to the characterization of equality in (i), one can prove that if $h$ is strictly decreasing, equality holds in the above inequality if and only if equality holds in inequality (\ref{eq:Santalo2}), that is, $f(x) = \widetilde{c}\big[\! \left(1-  s \|Ax\|^2\right)_+\big]^\frac{1}{2s}$. 

Similarly, the desired result for $h\in \Psi$ holds if   `` $\leq$ " is replaced by `` $\geq$ ".  
\end{proof}  

Let $g_1$ be as in formula (\ref{equation:g1}).  If $g_1$ is a log-concave function and $0<c_s<\infty$, then for  $h\in \Phi$,   $$
	 \OrliczGS (\psi) \leq (1 + ns ) \cdot \omega_{n, s} \cdot c_s^{-n}   \cdot h \big(c_s^{-n}\big).$$ Moreover, If $g_1$ is a log-concave function and $0<\bar{c}_s<\infty$, for $h\in \Phi$ being decreasing,   $$ \OrliczGS (\psi) \leq (1 + ns ) \cdot \omega_{n, s} \cdot (\bar{c}_s)^{-n} \cdot  h\big( (\bar{c}_s)^{-n} \big);$$  while for $h\in \Psi$, the above inequality holds with `` $\leq$" replaced by `` $\geq$".  These inequalities together with Corollary \ref{equality of ball-11} and its remark imply the following result.  
\begin{cor}\label{generalaffineineq1==11=1}
Let  $(f, \psi)$ be the pair given by formula (\ref{def:psi}) with $f\in \Cs^{2, 0}$ and let $g_1$ be log-concave.  

 \noindent  (i)  Assume that $0<c_s<\infty$. Then,  for $h\in \Phi$,   
\begin{eqnarray*} 
	 \OrliczGS (\psi) \  \leq\  \OrliczGS\big(\E^s_{c_s}\big).	 \end{eqnarray*}  
 (ii)  Assume that $0<\bar{c}_s<\infty$. Then, for $h\in \Phi$ being decreasing,  $$ \OrliczGS (\psi) \ \leq \    \OrliczGS(\E^s_{\bar{c}_s}).$$  The above inequality holds for $h\in \Psi$ with `` $\leq$" replaced by `` $\geq$". 
 
 If $s\leq 1/2$, there is an equality in (i) and in (ii) if $h\in \Phi$ is strictly decreasing  (or  $h\in \Psi$ is strictly increasing) if and only if  $f(x) =  \widetilde{c} \big[\!\left(1-  s \|Ax\|^2\right)_+\big]^\frac{1}{2s}$ for some constant $\widetilde{c}>0$ and some  positive definite matrix $A$.
   \end{cor}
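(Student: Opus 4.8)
The plan is to deduce Corollary \ref{generalaffineineq1==11=1} almost verbatim from the two displayed inequalities recorded immediately before its statement, by identifying their right-hand sides with $\OrliczGS(\E^s_{c_s})$ and $\OrliczGS(\E^s_{\bar c_s})$ through Corollary \ref{equality of ball-11} and the remark following it. First I would establish the inequalities in (i) and (ii). Since $g_1$ is log-concave, $g_1$ is an admissible competitor in the infimum defining $\OrliczGS$, so Proposition \ref{bounded by volume product-s} applies and gives $\OrliczGS(\psi)\leq (1+ns)\,I(f)\,h\big(\omega_{n,s}/I(f_{(s)}^\circ)\big)$ for $h\in\Phi$. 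In case (i) one has $\omega_{n,s}/I(f_{(s)}^\circ)=c_s^{-n}$ by the definition of $c_s$, while the functional Blaschke--Santal\'o inequality (\ref{eq:Santalo2}) yields $I(f)\leq \omega_{n,s}\,c_s^{-n}$; since $h(c_s^{-n})>0$ this gives $\OrliczGS(\psi)\leq (1+ns)\,\omega_{n,s}\,c_s^{-n}\,h(c_s^{-n})$. In case (ii) one instead uses $I(f)=\omega_{n,s}\,\bar c_s^{-n}$ together with $\omega_{n,s}/I(f_{(s)}^\circ)\geq \bar c_s^{-n}$ (again from (\ref{eq:Santalo2})) and the monotonicity of the decreasing function $h\in\Phi$ to obtain $\OrliczGS(\psi)\leq (1+ns)\,\omega_{n,s}\,\bar c_s^{-n}\,h(\bar c_s^{-n})$; for $h\in\Psi$ the analogous argument (with $h$ increasing and ``$\geq$'' from Proposition \ref{bounded by volume product-s}) gives the reversed inequality.

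Next I would identify the right-hand sides with Orlicz geominimal surface areas of the dilated extremal functions. Using $I_s(k_s^c)=c^{-n}\omega_{n,s}$, the bound above equals $(1+ns)\,I_s(k_s^{c_s})\,h(c_s^{-n})$. If $s\leq 1/2$, the function $\big[(1-\tfrac{s\|\cdot\|^2}{c^2})_+\big]^{(\frac1{2s}-1)}$ is log-concave, so Corollary \ref{equality of ball-11} gives the exact identity $\OrliczGS(\E^s_{c_s})=(1+ns)\,I_s(k_s^{c_s})\,h(c_s^{-n})$, and (i) follows at once; the same computation with $\bar c_s$ gives (ii). If $s>1/2$, the remark after Corollary \ref{equality of ball-11} still provides $\OrliczGS(\E^s_{c_s})\geq (1+ns)\,I_s(k_s^{c_s})\,h(c_s^{-n})$ for $h\in\Phi$ (and the reverse for $h\in\Psi$), and chaining this with the bound from the previous step in the correct direction again yields $\OrliczGS(\psi)\leq \OrliczGS(\E^s_{c_s})$ (resp.\ $\geq$ for $h\in\Psi$), so the inequalities in (i) and (ii) hold for all $s>0$.

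For the equality characterization, restrict to $s\leq 1/2$ and to $h\in\Phi$ strictly decreasing (the case $h\in\Psi$ strictly increasing is symmetric). Here Step~2 turns the chain into an identity, so equality in (i) forces equality in (\ref{eq:Santalo2}), which by the equality case of that inequality means $f(x)=\widetilde c\,\big[(1-s\|Ax\|^2)_+\big]^{1/(2s)}$ for some $\widetilde c>0$ and positive definite $A$. Conversely, if $f$ has this form I would repeat the direct computation from the proof of Theorem \ref{generalaffineineq1}(i): there the test function realizing equality is $g(A^2x)=(\det A)^{-1}\big[(1-s\|Ax\|^2)_+\big]^{\frac1{2s}-1}$, and for $s\leq 1/2$ this function is log-concave, hence lies in $\Ms$ and is admissible in the infimum defining $\OrliczGS$; evaluating $V^{(s)}_h(\psi,g)$ on it gives $\OrliczGS(\psi)=(1+ns)\,\omega_{n,s}\,c_s^{-n}\,h(c_s^{-n})=\OrliczGS(\E^s_{c_s})$. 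Case (ii) is identical with $c_s$ replaced by $\bar c_s$. The one point demanding care is exactly this converse direction: one must check that the extremal test function from the affine setting is genuinely log-concave so that it competes in the geominimal infimum, and this is precisely where the hypothesis $s\leq 1/2$ is needed — for $s>1/2$ that test function is log-convex, which is why no equality characterization is available in that range.
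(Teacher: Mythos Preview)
Your proposal is correct and follows essentially the same approach as the paper: the paper derives the two displayed bounds for $\OrliczGS(\psi)$ just before the corollary (via Proposition~\ref{bounded by volume product-s} and inequality~(\ref{eq:Santalo2})) and then simply states that ``these inequalities together with Corollary~\ref{equality of ball-11} and its remark imply the following result,'' which is exactly the chaining and identification you carry out. Your treatment is in fact more explicit than the paper's --- you spell out the $s>1/2$ case via the remark and you verify that the extremal test function from Theorem~\ref{generalaffineineq1} is log-concave when $s\le 1/2$, which is precisely what makes it admissible for the geominimal infimum and justifies the converse direction of the equality characterization.
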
 

 Note that Theorem \ref{generalaffineineq1} and Corollary  \ref{generalaffineineq1==11=1} would become Corollary \ref{generalaffineineq-2} if  $s$ goes to zero.

 \subsection{The $L_p$ geominimal surface area of $s$-concave functions and a Santal\'{o} type inequality}

 The $L_p$ affine surface area of $s$-concave functions was investigated in \cite{Caglar-6}. In this subsection, we will briefly discuss the properties for the $L_p$ geominimal surface area of $s$-concave functions. Taking Theorem \ref{equivalent:affine:surface:area2} into account, it is more natural to define $G^{(s)}_p(\psi)$, the $L_p$ geominimal surface area of the $s$-concave function $f$, for  $-n\neq p\in \bbR$,  as follows.  
  \begin{defn}
  \label{equivalent:affine:surface:area-homogeneous p case-s} Let   $(f, \psi)$ be the pair given by formula  (\ref{def:psi}) with $f\in \Cs$.   For $p\geq 0$,  define   
	\begin{eqnarray*} G_{p}^{(s)} (\psi)&=& \Big(\frac{1}{1+ns} \Big)  \cdot \inf_{ g\in \Ms } \left\{ V_p^{(s)}(\psi, g)^{\frac{n}{n+p}}\ 
I_s(g, \psi_{(s)}^\star)^{\frac{p}{n+p}}\right\}  \\
&=& (\omega_{n,s})^{\frac{p}{n+p}}   \cdot   \bigg(\frac{\OrliczGS (\psi)}{1+ns} \bigg)^{\frac{n}{n+p}}, 
\end{eqnarray*}   with $h(t)=t^{-p/n}$. For $-n\neq p<0$, $G_{p}^{(s)} (\psi)$ is defined similarly but with `` $\inf$" replaced by `` $\sup$".   \end{defn}  
  The results in previous subsections can be modified accordingly to the $L_p$ geominimal surface area. In particular, it is $SL_{\pm}(n)$-invariant with homogeneous degree $\frac{n(p-n)}{p+n}$. If $c>0$ is a constant and $s\leq 1/2$,  Corollary \ref{equality of ball-11} implies that for all $-n\neq p\in \bbR$, 
 \begin{equation*}
 G_{p}^{(s)}\big(\E^s_c\big) =\ c^{\frac{ n(p-n)}{n+p}} \cdot \omega_{n,s}.
\end{equation*}  Moreover, the remark of Corollary \ref{equality of ball-11} implies that if $s>1/2$, then for $p>0$,  \begin{equation*}
 G_{p}^{(s)}\big(\E^s_c\big) \geq \ c^{\frac{ n(p-n)}{n+p}} \cdot \omega_{n,s},
\end{equation*} and for $-n\neq p<0$,  \begin{equation*}
 G_{p}^{(s)}\big(\E^s_c\big) \leq \ c^{\frac{ n(p-n)}{n+p}} \cdot \omega_{n,s}.
\end{equation*}

 A direct consequence of Proposition \ref{bounded by volume product-s}  is the following result, which leads to Proposition \ref{bounded by volume product-1} if  $s\rightarrow 0$. Similar inequalities were obtained in  \cite{Caglar-6, CaglarWerner}. Let $g_1$ be as in formula (\ref{equation:g1}).

\begin{prop}\label{bounded by volume product-11} Let  $(f, \psi)$ be the pair given by formula (\ref{def:psi}) with $f\in \Cs^2$ and  $g_1\in \Ms$. Then,  
\begin{eqnarray*}   G_{p}^{(s)} (\psi) \ \leq  \  \big[I(f)\big]^{\frac{n}{n+p}} \cdot  \big[I( f_{(s) }^\circ) \big]^{\frac{p}{n+p}},\end{eqnarray*}  for all $p\geq 0$. 
 Similar inequalities hold  for $p\in (-\infty, -n)\cup (-n, 0)$  with `` $\leq$" replaced by `` $\geq$".  \end{prop}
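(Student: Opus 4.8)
The plan is to obtain this as the power-function specialization of Proposition \ref{bounded by volume product-s}, in exactly the way Definition \ref{equivalent:affine:surface:area-homogeneous p case-s} recasts $G_p^{(s)}$ in terms of $\OrliczGS$. Concretely, fix the weight $h(t)=t^{-p/n}$ throughout; by Definition \ref{equivalent:affine:surface:area-homogeneous p case-s} one has
\[
G_p^{(s)}(\psi)=(\omega_{n,s})^{\frac{p}{n+p}}\left(\frac{\OrliczGS(\psi)}{1+ns}\right)^{\frac{n}{n+p}},
\]
so the whole problem reduces to bounding $\OrliczGS(\psi)$ for this $h$ and then rearranging the exponents.

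First I would record where $h(t)=t^{-p/n}$ sits among the admissible weight classes: for $p>0$ one has $h''>0$, so $h\in\Phi$; for $-n<p<0$ one has $h'>0$ and $h''<0$, so $h\in\Psi$; for $p<-n$ again $h''>0$, so $h\in\Phi$; and for $p=0$, $h\equiv 1$ is the constant case. Since $g_1\in\Ms$, i.e.\ $g_1$ is log-concave, the geominimal half of Proposition \ref{bounded by volume product-s} is available: for $h\in\Phi$ it gives $\OrliczGS(\psi)\le(1+ns)\,I(f)\,h\!\big(\omega_{n,s}/I(f_{(s)}^\circ)\big)$, and for $h\in\Psi$ the same with ``$\le$'' reversed to ``$\ge$''. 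Substituting $h(t)=t^{-p/n}$ turns $h\!\big(\omega_{n,s}/I(f_{(s)}^\circ)\big)$ into $\big(I(f_{(s)}^\circ)/\omega_{n,s}\big)^{p/n}$; thus for $p\ge 0$ and for $p<-n$,
\[
\frac{\OrliczGS(\psi)}{1+ns}\ \le\ I(f)\left(\frac{I(f_{(s)}^\circ)}{\omega_{n,s}}\right)^{p/n},
\]
while for $-n<p<0$ the reverse inequality holds.

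Finally I would raise both sides to the power $\tfrac{n}{n+p}$ and multiply by the positive constant $(\omega_{n,s})^{\frac{p}{n+p}}$, invoking the displayed identity for $G_p^{(s)}(\psi)$. The one point requiring care is that $t\mapsto t^{\frac{n}{n+p}}$ is increasing when $n+p>0$ but decreasing when $n+p<0$; this sign flip at $p=-n$ is precisely why the final inequality comes out as ``$\ge$'' uniformly on $(-\infty,-n)\cup(-n,0)$ even though $h\in\Phi$ for $p<-n$ while $h\in\Psi$ for $-n<p<0$. In every case the factor $(\omega_{n,s})^{\frac{p}{n+p}}$ cancels against the $\omega_{n,s}^{-\frac{p}{n}\cdot\frac{n}{n+p}}=\omega_{n,s}^{-\frac{p}{n+p}}$ produced from the right-hand side, leaving
\[
G_p^{(s)}(\psi)\ \le\ \big[I(f)\big]^{\frac{n}{n+p}}\big[I(f_{(s)}^\circ)\big]^{\frac{p}{n+p}}\qquad(p\ge 0),
\]
with equality at $p=0$ (there $h\equiv1$ and $\OrliczGS(\psi)=(1+ns)I(f)$ by identity (\ref{identity001})), and the reverse inequality for $p\in(-\infty,-n)\cup(-n,0)$. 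I do not expect any genuine obstacle here: the statement is a bookkeeping consequence of Proposition \ref{bounded by volume product-s} and Definition \ref{equivalent:affine:surface:area-homogeneous p case-s}, and the $p<0$ argument mirrors the $p>0$ one once the sign of $n+p$ is tracked — just as Proposition \ref{bounded by volume product-1} was deduced in the convex-function setting.
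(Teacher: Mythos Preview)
Your proposal is correct and is exactly the approach the paper intends: the paper states this proposition as ``a direct consequence of Proposition \ref{bounded by volume product-s}'' without further details, and you have correctly filled in the specialization $h(t)=t^{-p/n}$, identified its membership in $\Phi$ or $\Psi$ according to the sign of $p$ and $n+p$, and tracked the sign reversal when raising to the power $\frac{n}{n+p}$ for $p<-n$. There is nothing to add.
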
 
 
 Suppose that $g_1$ and  $g_2$ are log-concave with $g_1$ as in formula (\ref{equation:g1}) and   \begin{eqnarray*}
g_2(y) =  \big(1 - s \psi(y)\big)^{(\frac{1}{s} -1)} \cdot \big(1 + s \langle \nabla \psi(y), y \rangle -s \psi(y) \big).  
\end{eqnarray*}    Proposition \ref{bounded by volume product-11} implies that   for $p\geq 0$,  $$ G_{p}^{(s)} (\psi) \cdot G_{p}^{(s)}(\ps)  \leq I(f)  \cdot   I(f_{(s)}^\circ),$$   while  for $-n\neq p<0$  the above inequality holds with `` $\leq$" replaced by `` $\geq$". If in addition $f\in \Cs^{2, 0}$,  the following Santal\'{o} type inequality for s-concave functions holds:   for $p>0$,   \[ G_{p}^{(s)} (\psi) \cdot G_{p}^{(s)}(\ps)  \leq  \omega_{n, s}^2 \leq  \big[ G_{p}^{(s)} (\E^s_1)\big]^2. \]  Moreover, if $s\leq 1/2$, there is an equality  if and only if  $f(x) = \widetilde{c} \big[\!\left(1-  s \|Ax\|^2\right)_+\big]^\frac{1}{2s}$ for some $\widetilde{c}>0$ and positive definite matrix $A$. 
   
 Immediately from Proposition \ref{bounded by volume product-11} and inequality (\ref{eq:Santalo2}),  one has the following functional $L_p$ affine isoperimetric inequalities for $s$-concave functions, which  becomes Corollary \ref{generalaffineineq--1-logconcave} if $s\rightarrow 0$. Similar inequalities were obtained in  \cite{Caglar-6, CaglarWerner}. Let $g_1$ be as in formula (\ref{equation:g1}). 
 
\begin{cor}\label{generalaffineineq--11} Let  $(f, \psi)$ be the pair given by formula (\ref{def:psi}) with $f\in \Cs^{2, 0} $ and  $g_1\in \Ms$. 
Assume that $0<I(f)<\infty$ and $0<I(f_{(s) }^\circ)<\infty$. 

 \noindent  
(i)  Let  $p>0$. 
Then,   \[\frac{G_{p}^{(s)} (\psi)}{G_{p}^{(s)} (\E^s_1)}  \leq \min\bigg\{\bigg(\frac{I( f_{(s)}^\circ )}{\omega_{n, s}}\bigg)^{\frac{p-n}{p+n}},\ \   \bigg(\frac{I(f)}{\omega_{n, s}}\bigg)^{\frac{n-p}{n+p}}\bigg\}.\]  
 (ii) Let  $p\in (-n, 0)$. 
Then,   \[\frac{G_{p}^{(s)} (\psi)}{G_{p}^{(s)} (\E^s_1)}  \geq  \bigg(\frac{I(f)}{\omega_{n, s}}\bigg)^{\frac{n-p}{n+p}}.\]  
(iii) Let  $p<-n$. 
Then,   \[\frac{G_{p}^{(s)} (\psi)}{G_{p}^{(s)} (\E^s_1)}  \geq  \bigg(\frac{I( f_{(s)}^\circ )}{\omega_{n, s}}\bigg)^{\frac{p-n}{p+n}}.\]   

 Moreover, if $s\leq 1/2$, there is an equality  if and only if  $f(x) = \widetilde{c} \big[\!\left(1-  s \|Ax\|^2\right)_+\big]^\frac{1}{2s}$ for some $\widetilde{c}>0$ and positive definite matrix $A$. \end{cor}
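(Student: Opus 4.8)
The plan is to deduce everything from Proposition \ref{bounded by volume product-11} combined with the functional Blaschke--Santal\'o inequality (\ref{eq:Santalo2}), exactly in the spirit of the passage between Theorem \ref{generalaffineineq--1} and Corollary \ref{generalaffineineq--1-logconcave} in the convex-function setting. First I would recall that since $g_1\in\Ms$, Proposition \ref{bounded by volume product-11} gives, for $p>0$,
\[
G_p^{(s)}(\psi)\le \big[I(f)\big]^{\frac{n}{n+p}}\cdot\big[I(f_{(s)}^\circ)\big]^{\frac{p}{n+p}},
\]
and the reverse inequality for $-n\ne p<0$. Next, using $f\in\Cs^{2,0}$, inequality (\ref{eq:Santalo2}) reads $I(f)\cdot I(f_{(s)}^\circ)\le \omega_{n,s}^2$; solving for one factor in terms of the other yields the two one-sided bounds $I(f_{(s)}^\circ)\le \omega_{n,s}^2/I(f)$ and $I(f)\le\omega_{n,s}^2/I(f_{(s)}^\circ)$. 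Finally, by the displayed formula right before Proposition \ref{bounded by volume product-11} (a consequence of Corollary \ref{equality of ball-11}), one has $G_p^{(s)}(\E^s_1)=\omega_{n,s}$ for all $-n\ne p\in\bbR$ when $s\le 1/2$; I would note that the cases $p\in(-n,0)$ and $p<-n$ only require the \emph{lower} bound $G_p^{(s)}(\E^s_1)\ge\omega_{n,s}^{\,}$ furnished by the remark after Corollary \ref{equality of ball-11}, so the hypothesis $s\le1/2$ is not actually needed for the inequalities themselves, only for the equality characterization.

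For part (i), substitute the Santal\'o bound into the Proposition. Writing $\frac{n}{n+p}+\frac{p}{n+p}=1$, from $I(f_{(s)}^\circ)\le\omega_{n,s}^2/I(f)$ we get
\[
G_p^{(s)}(\psi)\le \big[I(f)\big]^{\frac{n}{n+p}}\Big[\tfrac{\omega_{n,s}^2}{I(f)}\Big]^{\frac{p}{n+p}}
=\omega_{n,s}^{\frac{2p}{n+p}}\,\big[I(f)\big]^{\frac{n-p}{n+p}}
=\omega_{n,s}\cdot\Big(\tfrac{I(f)}{\omega_{n,s}}\Big)^{\frac{n-p}{n+p}},
\]
and symmetrically, using $I(f)\le\omega_{n,s}^2/I(f_{(s)}^\circ)$, the same computation produces the bound $\omega_{n,s}\cdot\big(I(f_{(s)}^\circ)/\omega_{n,s}\big)^{\frac{p-n}{p+n}}$; taking the minimum and dividing by $G_p^{(s)}(\E^s_1)=\omega_{n,s}$ gives (i). For (ii) and (iii), the inequality in Proposition \ref{bounded by volume product-11} reverses (to ``$\ge$''), while inequality (\ref{eq:Santalo2}) is unchanged; for $p\in(-n,0)$ the exponent $\frac{n-p}{n+p}$ is positive so one keeps the factor $I(f)$ and discards $I(f_{(s)}^\circ)$ via $I(f_{(s)}^\circ)\le\omega_{n,s}^2/I(f)$, obtaining $G_p^{(s)}(\psi)\ge\omega_{n,s}(I(f)/\omega_{n,s})^{\frac{n-p}{n+p}}$; for $p<-n$ the exponent $\frac{p-n}{p+n}$ is positive and the roles of $I(f)$ and $I(f_{(s)}^\circ)$ swap, giving the bound in (iii). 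Dividing by $G_p^{(s)}(\E^s_1)=\omega_{n,s}$ completes those parts.

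For the equality characterization, assume $s\le1/2$ (so that $g_1$-log-concavity is automatic and Corollary \ref{equality of ball-11} gives the exact value $G_p^{(s)}(\E^s_1)=\omega_{n,s}$, and also equality is attainable in Proposition \ref{bounded by volume product-s}/\ref{bounded by volume product-11}). Equality in any of (i)--(iii) forces equality in (\ref{eq:Santalo2}), which by the stated characterization means $f(x)=\widetilde c\,\big[(1-s\|Ax\|^2)_+\big]^{1/2s}$ for some $\widetilde c>0$ and positive definite $A$; conversely, for such $f$ one checks directly, as in the proof of Theorem \ref{generalaffineineq1}(i)--(ii), that all intermediate inequalities become equalities. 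I expect the only mildly delicate point to be bookkeeping the sign of the exponents $\frac{n-p}{n+p}$ and $\frac{p-n}{p+n}$ across the three ranges of $p$ so that the correct one of the two Santal\'o bounds is applied in each case; everything else is a direct substitution.
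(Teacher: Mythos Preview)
Your approach is exactly the paper's: the corollary is stated there as following ``immediately from Proposition \ref{bounded by volume product-11} and inequality (\ref{eq:Santalo2}),'' and you have simply written out the substitution and exponent bookkeeping in detail. The algebraic manipulations in each of the three ranges of $p$ are correct.

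Two small slips in your side remarks, neither of which affects the main argument. First, for parts (ii) and (iii) you say you need the \emph{lower} bound $G_p^{(s)}(\E^s_1)\ge\omega_{n,s}$ from the remark after Corollary \ref{equality of ball-11}; in fact for $-n\ne p<0$ that remark gives the \emph{upper} bound $G_p^{(s)}(\E^s_1)\le\omega_{n,s}$, and that is precisely what you need to divide $G_p^{(s)}(\psi)\ge\omega_{n,s}\cdot(\text{stuff})$ through and preserve the direction. Second, the parenthetical ``so that $g_1$-log-concavity is automatic'' when $s\le1/2$ is not right for general $\psi$: the paper only shows this for the specific function $\E^s_c$, and log-concavity of $g_1$ is an explicit hypothesis of the corollary. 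Otherwise the argument, including the equality discussion via the characterization in (\ref{eq:Santalo2}) and the verification for the extremizers as in Theorem \ref{generalaffineineq1}, is fine.
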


   \section{The general mixed Orlicz affine and geominimal surface areas for multiple convex functions}\label{section:mixed}
   
   In this section, we introduce the general  mixed Orlicz affine and geominimal surface areas for multiple convex functions. We have this notion only for convex functions in this section, but one can introduce it  for s-concave functions as well along the same lines. 
   
   Let  $\vec h = (h_1, \cdots, h_m)$, $\vec g = (g_1, \cdots, g_m)$, $\vec F^1 = (F_1^1, F_2^1, \cdots, F_m^1)$, $ \vec F^2 = (F_1^2, F_2^2, \cdots, F_m^2)$ etc.   We say $\vec h \in \Phi^m$ (or $\vec h \in \Psi^m$) if each $h_i \in \Phi$ (or $h_i \in \Psi$).  Assume that $X_{\vec{\psi}}=\cap_{j=1}^{m} X_{ \psi_j}$  is a nonempty set. Define
   \begin{eqnarray*}
   V_{\vec h, \vec F^1, \vec F^2} (\vec \psi, \vec g)= \int _{X_{\vec{\psi}}}  \prod_{i=1}^{m} \bigg[ h_i \bigg(\frac{g_i (\nabla \psi_i (x))}{F_i^2 ( \langle x, \nabla \psi_i(x) \rangle - \psi_i(x))}\bigg) F_i^1 (\psi_i(x)) \bigg]^{\frac{1}{m}}\,dx. 
   \end{eqnarray*}   If $\psi_i = \psi$, $g_i = g$,  $h_i = h$, $F_i^1 = F_1$, and $F_i^2 = F_2$ for all $1 \leq i \leq m$, then $V_{\vec h, \vec F^1, \vec F^2} (\vec \psi, \vec g)$ becomes $V_{h, F_1, F_2}(\psi,  g)$  in Definition \ref{Orlicz-mixed-integral-log-concave-1}.
   
  The general  mixed Orlicz  affine and geominimal surface areas for multiple convex functions are defined as follows. Note that there are many different ways to  define mixed Orlicz affine  and geominimal surface areas,  but we only focus on the one introduced below  due to high similarity of their properties. 
   
   \begin{defn} \label{Mixed Orlicz affine surface}  Let  $F_i^1, F_i^2 \colon \R\to (0, \infty)$ be measurable functions and $\psi_i \in \C$ for $1 \leq i \leq m$. For $\vec h \in \Phi^m$,   the general  mixed Orlicz  affine surface area of  $\vec \psi$  is defined by 
   \begin{eqnarray*}
     \OrliczAmix(\vec\psi) = \inf \left\{ V_{\vec h,  \vec F^1, \vec F^2} (\vec \psi, \vec g):   g_i \in \mathcal{F}^+_{\psi_i^*} \ \mathrm{with}\ I(g_i, \psi_i^*)=(\sqrt{2\pi})^n,\ \  1\leq i\leq m \right\}, 
     \end{eqnarray*} and  the general  mixed Orlicz   geominimal surface area of  $\vec \psi$  is defined by 
   \begin{eqnarray*}
     \OrliczGmix(\vec\psi) = \inf \left\{ V_{\vec h,  \vec F^1, \vec F^2} (\vec \psi, \vec g):   g_i \in \mathcal{L}_{\psi_i^*} \ \mathrm{with}\ I(g_i, \psi_i^*)=(\sqrt{2\pi})^n, \ \ 1\leq i\leq m \right\}. 
     \end{eqnarray*}
 The general  mixed Orlicz  affine and geominimal surface areas of  $\vec \psi$ for $\vec h\in \Psi^m$  are defined similarly with `` $\inf$" replaced by `` $\sup$". 
   \end{defn}

 As before,  one can check that $\OrliczAmix(\vec\psi) \leq \OrliczGmix(\vec\psi)$  for $\vec h \in \Phi^m$ and $\OrliczAmix(\vec \psi) \geq \OrliczGmix(\vec \psi)$  for $\vec h \in \Psi^m$.  If $F_i^1=F_i^2=e^{-t}$ and  $\psi_i\in \C$ for  $1 \leq i \leq m$, then  $f_i=F_i^1\circ \psi_i=e^{-\psi_i}$ and $F_i^2 \circ\psi_i^*=e^{-\psi_i^*}=f_i^{\circ}$  are log-concave functions. Therefore, $as^{orlicz}_{\vec h} (\vec f)$,  the mixed Orlicz affine surface area of $\vec f=(f_1, \cdots, f_m)$ can be formulated as   $$as^{orlicz}_{\vec h} (\vec f)= as^{orlicz}_{\vec h,  (e^{-t}, \cdots, e^{-t}), (e^{-t}, \cdots, e^{-t})}(\vec \psi).$$ It is a  non-homogeneous extension of the mixed $L_p$ affine surface area of log-concave functions \cite{CaglarWerner2}.  Similarly, one can define  $G^{orlicz}_{\vec h} (\vec f)$,  the mixed Orlicz geominimal surface area of $\vec f$ by $$G^{orlicz}_{\vec h} (\vec f)= G^{orlicz}_{\vec h,  (e^{-t}, \cdots, e^{-t}), (e^{-t}, \cdots, e^{-t})}(\vec \psi).$$   
    
 The general  mixed Orlicz  affine and geominimal surface areas for multiple convex functions are  $SL_{\pm}(n)$-invariant. That is, for all $T\in SL_{\pm}(n)$ and $\vec h \in \Phi^m  \cup \Psi^m $,  
$$\OrliczAmix \big(\vec\psi\circ T\big)=\OrliczAmix(\vec \psi),\ \ \ \  \OrliczGmix \big(\vec\psi\circ T\big)=\OrliczGmix(\vec \psi)$$  where $ \vec\psi\circ T =  \left( \psi_1\circ T, \cdots, \psi_m \circ T \right)$. In particular, $as^{orlicz}_{\vec h}(\vec f)$ and $G^{orlicz}_{\vec h}(\vec f)$ are $SL_{\pm}(n)$-invariant.    
   
A direct consequence of H\"{o}lder's inequality is the following Alexander-Fenchel type inequality for the general  mixed Orlicz  affine and geominimal surface areas for multiple  convex functions. Note that the classical Alexandrov-Fenchel inequality for mixed volumes of convex bodies is one of the key inequalities in convex geometry with many applications (see e.g., \cite{SchneiderBook}). 
\begin{theo}\label{A-F}
\noindent   Let $\vec h\in \Phi^m\cup\Psi^m$ and $F_i^1, F_i^2 \colon \R\to (0, \infty)$ for all $1\leq i\leq m$. Then
\begin{eqnarray*} 
\left[ \OrliczAmix(\vec\psi) \right]^m  \ \leq  \  \prod_{k=1}^m \OrliczAk(\psi_k),\ \ \ \ \ \left[ \OrliczGmix(\vec\psi) \right]^m  \ \leq  \  \prod_{k=1}^m \OrliczGk(\psi_k).
\end{eqnarray*} 
Moreover, if $\vec h \in \Psi ^m$, one has, for all $1\leq r\leq m-1$, 
\begin{eqnarray*} 
\left[ \OrliczAmix(\vec\psi) \right]^r \leq   \prod_{ k = m-r+1}^{m} \OrliczAmixrk(\vec \psi_{r, k}), \ \ \ \ \ \left[ \OrliczGmix(\vec\psi) \right]^r \leq   \prod_{ k = m-r+1}^{m} \OrliczGmixrk(\vec \psi_{r, k}), \end{eqnarray*}  where $ \vec{F}^1_{r,k}, \vec{F}^2_{r,k}, \vec h_{r,k}$ and $\vec \psi_{r, k}$ are defined similarly with the following form:  for  $1\leq r\leq m-1$ and $ m-r<k\leq m$,   $ \vec{h}_{r,k}= (h_1, h_2, \cdots , h_{m-r}, \underbrace{h_{k}, \cdots , h_{k}}_r)$.  \end{theo}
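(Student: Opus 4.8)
The plan is to derive everything from H\"older's inequality applied to the integral defining $\OrliczAmix(\vec\psi)$, exactly as one does for the classical Alexandrov--Fenchel inequality for mixed volumes. First I would fix admissible test functions: pick $g_k\in\mathcal{F}^+_{\psi_k^*}$ (or $g_k\in\mathcal{L}_{\psi_k^*}$ in the geominimal case) with $I(g_k,\psi_k^*)=(\sqrt{2\pi})^n$ for each $k$, and write the integrand of $V_{\vec h,\vec F^1,\vec F^2}(\vec\psi,\vec g)$ over $X_{\vec\psi}$ as the product $\prod_{i=1}^m u_i(x)^{1/m}$, where $u_i(x)=h_i\!\big(g_i(\nabla\psi_i(x))/F_i^2(\langle x,\nabla\psi_i(x)\rangle-\psi_i(x))\big)F_i^1(\psi_i(x))\ge 0$. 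Since $X_{\vec\psi}=\cap_{j=1}^m X_{\psi_j}\subseteq X_{\psi_k}$ for each $k$, H\"older's inequality with exponents $m,\dots,m$ gives
\[
V_{\vec h,\vec F^1,\vec F^2}(\vec\psi,\vec g)=\int_{X_{\vec\psi}}\prod_{i=1}^m u_i^{1/m}\,dx
\le \prod_{k=1}^m\Big(\int_{X_{\vec\psi}} u_k\,dx\Big)^{1/m}
\le \prod_{k=1}^m\Big(\int_{X_{\psi_k}} u_k\,dx\Big)^{1/m}
=\prod_{k=1}^m V_{h_k,F^1_k,F^2_k}(\psi_k,g_k)^{1/m},
\]
the last inequality using $u_k\ge 0$ and monotonicity of the integral under enlarging the domain. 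Taking the infimum (resp.\ supremum for $\vec h\in\Psi^m$) first over $g_k$ on the right — which is legitimate since the $g_i$ vary independently — and then over $\vec g$ on the left yields $\OrliczAmix(\vec\psi)\le\prod_{k=1}^m\OrliczAk(\psi_k)^{1/m}$, i.e.\ $[\OrliczAmix(\vec\psi)]^m\le\prod_{k=1}^m\OrliczAk(\psi_k)$; the geominimal statement is identical with $\mathcal{L}_{\psi_k^*}$ in place of $\mathcal{F}^+_{\psi_k^*}$.

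For the second family of inequalities (the $\Psi^m$ case, $1\le r\le m-1$), I would group the $m$ factors into the $m-r$ ``unrepeated'' indices $1,\dots,m-r$ together with, for each $k$ with $m-r<k\le m$, an $r$-fold repetition of the $k$-th data $(h_k,F^1_k,F^2_k,\psi_k)$; this is precisely the data $(\vec h_{r,k},\vec F^1_{r,k},\vec F^2_{r,k},\vec\psi_{r,k})$ in the statement. The same H\"older argument, now splitting the integrand $\prod_{i=1}^m u_i^{1/m}$ into $r$ blocks each of which (after regrouping) is the integrand for $V_{\vec h_{r,k},\vec F^1_{r,k},\vec F^2_{r,k}}(\vec\psi_{r,k},\cdot)$ raised to the power $1/r$, gives $V_{\vec h,\vec F^1,\vec F^2}(\vec\psi,\vec g)\le\prod_{k=m-r+1}^m V_{\vec h_{r,k},\vec F^1_{r,k},\vec F^2_{r,k}}(\vec\psi_{r,k},\vec g_{r,k})^{1/r}$ for suitably copied test tuples $\vec g_{r,k}$; passing to suprema (legitimate in the concave/$\Psi$ regime, where $\OrliczAmix$ is a supremum) then yields the claimed bound, and likewise for the geominimal version using $\mathcal{L}$.

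The only genuinely delicate point is the interchange of $\inf$/$\sup$ with the product in the last step: one must check that taking the extremum of each factor $V_{h_k,F^1_k,F^2_k}(\psi_k,g_k)$ separately over its own $g_k$ is compatible with the constraint $I(g_k,\psi_k^*)=(\sqrt{2\pi})^n$ and does not lose or gain in the product — this works because the constraints decouple across $k$ and the extremum is over a product of feasible sets, so $\inf_{\vec g}\prod_k(\cdots)=\prod_k\inf_{g_k}(\cdots)$ for nonnegative factors (and the analogous identity for $\sup$); for the $r$-block version one additionally uses that the repeated tuple $\vec g_{r,k}=(g_k,\dots,g_k)$ is feasible for $\vec\psi_{r,k}$ exactly when $g_k$ is feasible for $\psi_k$. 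A secondary technical caveat, consistent with the paper's standing convention, is that all the integrals above are assumed finite and the extrema attained (or at least the bounds interpreted in $[0,\infty]$), so no integrability issue obstructs the argument. Once these bookkeeping matters are in place the proof is a direct transcription of the classical Alexandrov--Fenchel proof via iterated H\"older, and no new inequality is needed.
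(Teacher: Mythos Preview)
Your proposal is correct and follows exactly the approach the paper indicates: the paper merely states that the theorem is ``a direct consequence of H\"older's inequality'' and gives no further details, so you have in fact supplied the missing argument along the intended lines. The only (harmless) slip is notational: the feasible tuple for $\vec\psi_{r,k}$ should be $\vec g_{r,k}=(g_1,\ldots,g_{m-r},g_k,\ldots,g_k)$, not just the repeated block $(g_k,\ldots,g_k)$; your feasibility check and the passage to the supremum in the $\Psi^m$ regime then go through exactly as you describe.
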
 
       For $1 \leq i \leq m$, let  $$\hat{c}_i=\bigg(\frac{I(\breve{F_i}, 1 )}{I(F_i^2 \circ \psi_i^*, \psi_i^*)}\bigg)^{\frac{1}{n}}   \ \ \mathrm{and} \ \ \bar{c}_i=\bigg(\frac{I(\breve{F_i}, 1)}{I(F_i^1 \circ\psi_i, \psi_i)}\bigg)^{\frac{1}{n}},$$ 
   where, for $F_i^1, F_i^2: \bbR\rightarrow (0, \infty)$, the  decreasing function  $\breve{F_i}:\R \to (0, \infty)$  is defined  by 
   \begin{equation*}
   \breve{F_i}(t)= \sup_{\frac{t_1+ t_2}{2} \ge t} \sqrt{F_i^1(t_1) F_i^2 ( t_2 )}.   
   \end{equation*} 
   
The following functional isoperimetric inequality is a direct consequence of  Theorems \ref{generalaffineineq} and \ref{A-F}. \begin{cor} \label{cor--6} Let  $\psi_i \in \C_0$ and $F_i^1, F_i^2 \colon \R \to(0, \infty)$ be  such that  $0<I( \breve{F_i}, 1)<\infty$ for all $1\leq i\leq m$. 
    
     \noindent  (i)  Let $\vec h\in \Phi^m$. If $0<I(F_i^2 \circ \psi_i^*, \psi^*_i)<\infty$ for all $1\leq i\leq m$, one has, 
     \[   \left[ \OrliczAmix(\vec\psi) \right]^m \ \leq   \ \prod_{i=1}^m \OrliczAi\Big(\frac{\|\cdot \|^2}{2\cdot \hat{c}_i^2}\Big),\] and if in addition $F_i^2 \circ \psi_i^*$ and $\breve{F_i} (\frac{\|\cdot  \|^2}{2})$ are log-concave for all $1\leq i\leq m$,  \[   \left[ \OrliczGmix(\vec\psi) \right]^m \ \leq   \ \prod_{i=1}^m \OrliczGi\Big(\frac{\|\cdot \|^2}{2\cdot \hat{c}_i^2}\Big).\]
    (ii)  Let $\vec{h} \in \Phi^m$ with each $h_i$ being  decreasing. If  $0<I(F_i^1 \circ \psi_i, \psi_i)<\infty$ for all $1\leq i\leq m$, one has  
   \[ \left[ \OrliczAmix(\vec\psi) \right]^m \ \leq  \ \prod_{i=1}^m  \OrliczAi\Big(\frac{ \bar{c}_i^2\cdot \|\cdot \|^2}{2}\Big),\] and if in addition $F_i^2 \circ \psi_i^*$ and $\breve{F_i} (\frac{\|\cdot \|^2}{2})$ are log-concave for all $1\leq i\leq m$,  \[   \left[ \OrliczGmix(\vec\psi) \right]^m \ \leq   \ \prod_{i=1}^m \OrliczGi\Big(\frac{ \bar{c}_i^2\cdot \|\cdot \|^2}{2}\Big).\]
    \end{cor}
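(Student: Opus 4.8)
The plan is to combine the Alexander–Fenchel type inequality of Theorem \ref{A-F} with the one-body affine isoperimetric inequality of Theorem \ref{generalaffineineq}, applied coordinate by coordinate. Concretely, for part (i) with $\vec h\in\Phi^m$, the first step is to invoke the first inequality in Theorem \ref{A-F}, namely
\[
\left[\OrliczAmix(\vec\psi)\right]^m\ \le\ \prod_{k=1}^m \OrliczAk(\psi_k),
\]
which holds for all $\vec h\in\Phi^m\cup\Psi^m$ with no log-concavity hypotheses. The second step is then to bound each individual factor $\OrliczAk(\psi_k)$ from above. Since $\psi_k\in\C_0$ and $h_k\in\Phi$ and $0<I(\breve{F_k},1)<\infty$ (which follows from the assumption $0<I(\breve{F_i},1)<\infty$ for each $i$), part (i) of Theorem \ref{generalaffineineq}, applied with $F_1=F_k^1$, $F_2=F_k^2$ and with $\hat c$ there equal to $\hat c_k$ here, yields
\[
\OrliczAk(\psi_k)\ \le\ as^{orlicz}_{h_k,\breve{F_k},\breve{F_k}}\Big(\frac{\|\cdot\|^2}{2\cdot\hat c_k^2}\Big)=\OrliczAi\Big(\frac{\|\cdot\|^2}{2\cdot\hat c_k^2}\Big),
\]
using the notation $\OrliczAi$ defined in the preamble for the $i$-th body. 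Multiplying these $m$ inequalities together and combining with the Alexander–Fenchel step gives exactly the claimed bound for $\left[\OrliczAmix(\vec\psi)\right]^m$.

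For the geominimal statement in (i), the same two-step scheme applies but now one uses the second inequality in Theorem \ref{A-F}, $\left[\OrliczGmix(\vec\psi)\right]^m\le\prod_{k=1}^m\OrliczGk(\psi_k)$, together with the geominimal clause of Theorem \ref{generalaffineineq}(i); the latter requires precisely that $F_k^2\circ\psi_k^*$ and $\breve{F_k}(\tfrac{\|\cdot\|^2}{2})$ be log-concave, which is the extra hypothesis imposed here. Part (ii) is entirely parallel: one replaces Theorem \ref{generalaffineineq}(i) by Theorem \ref{generalaffineineq}(ii), which needs each $h_i\in\Phi$ to be decreasing and uses the constant $\bar c_i$ (built from $I(F_i^1\circ\psi_i,\psi_i)$) in place of $\hat c_i$; again the affine case needs $0<I(\breve{F_i},1)<\infty$ and $0<I(F_i^1\circ\psi_i,\psi_i)<\infty$ for each $i$, while the geominimal case additionally needs $F_i^2\circ\psi_i^*$ and $\breve{F_i}(\tfrac{\|\cdot\|^2}{2})$ log-concave.

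There is essentially no hard analytic obstacle here: the corollary is a formal consequence of two results already established in the excerpt, and the only care needed is bookkeeping — matching the generic $F_1,F_2,h,\psi$ of Theorem \ref{generalaffineineq} with the $k$-th components $F_k^1,F_k^2,h_k,\psi_k$, checking that the hypotheses of the corollary supply exactly the hypotheses required by Theorem \ref{A-F} and Theorem \ref{generalaffineineq} for every index, and verifying that the constants $\hat c_k$ (resp.\ $\bar c_k$) appearing in the single-body inequality coincide with those defined just before the corollary statement. The mildest subtlety worth a sentence in the write-up is that Theorem \ref{A-F}'s first pair of inequalities is valid for $\vec h\in\Phi^m\cup\Psi^m$ with no integrability or log-concavity assumptions, so all the hypotheses of the corollary are consumed purely by the single-body step, not by the Alexander–Fenchel step. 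I expect the whole proof to be three or four lines: cite Theorem \ref{A-F}, cite Theorem \ref{generalaffineineq}(i) or (ii) factorwise, multiply, done.
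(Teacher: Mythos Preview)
Your proposal is correct and matches the paper's own approach exactly: the paper states that the corollary ``is a direct consequence of Theorems \ref{generalaffineineq} and \ref{A-F}'' and gives no further argument. Your bookkeeping of hypotheses (which assumptions feed the Alexander--Fenchel step versus the single-body isoperimetric step) is accurate and would make for a clean write-up.
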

   
    In a similar manner, one can define the $i$-th general  mixed Orlicz  affine and geominimal surface areas for two convex functions.   Hereafter,   let vectors $ \vec h, \vec{\psi}, \vec g , \vec F^1$ and $\vec F^2$ be as above, but with only $2$ coordinates.  Assume that $X_{\vec{\psi}}=X_{\psi_1} \cap X_{\psi_2}$ is a nonempty set. Define  
   \begin{eqnarray*}
   V_{\vec{h}, i, \vec F^1, \vec F^2} (\vec \psi, \vec g) 
   &=& \int_{X_{\vec{\psi}}}  \bigg[ h_1 \bigg(\frac{g_1 (\nabla \psi_1 (x))} {F_1^2 ( \langle x, \nabla \psi_1(x) \rangle - \psi_1(x))}\bigg) F_1^1 (\psi_1(x)) \bigg]^{\frac{n-i}{n}} \\
   && \hskip 1cm  \times \bigg[ h_2 \bigg(\frac{g_2 (\nabla \psi_2 (x))} {F_2^2 ( \langle x, \nabla \psi_2(x) \rangle - \psi_2(x))} \bigg) F_2^1 (\psi_2(x)) \bigg]^{\frac{i}{n}}\,dx.  
   \end{eqnarray*}  
   
   We can define the $i$-th general  mixed Orlicz affine  and geominimal surface areas for $\vec \psi$ as follows. 
   \begin{defn} \label{i-th Mixed Orlicz affine surface}  Let   $\psi_i \in \C$ and $F_i^1, F_i^2 \colon \R\to (0, \infty)$ be measurable functions  for $i=1,2$.    For $\vec h \in \Phi^2$,   define  the $i$-th general  mixed Orlicz affine surface area  for $\vec \psi$ by 
   \begin{eqnarray*}
     \OrliczAmixith(\vec\psi) = \inf \left\{V_{ \vec h, i, \vec F^1, \vec F^2} (\vec \psi, \vec g):  g_i \in \mathcal{F}^+_{\psi_i^*} \ \mathrm{with} \ I(g_1, \psi_1^*)= I(g_2, \psi_2^*)=(\sqrt{2\pi})^n \right\},
     \end{eqnarray*} and  define  the $i$-th general  mixed Orlicz geominimal surface area  for $\vec \psi$ by 
   \begin{eqnarray*}
     \OrliczGmixith(\vec\psi) = \inf \left\{V_{ \vec h, i, \vec F^1, \vec F^2} (\vec \psi, \vec g):  g_i \in \mathcal{L}_{\psi_i^*} \ \mathrm{with} \ I(g_1, \psi_1^*)= I(g_2, \psi_2^*)=(\sqrt{2\pi})^n \right\}. 
     \end{eqnarray*} 
    For $\vec h\in \Psi^2$,    $\OrliczAmixith(\vec \psi)$ and $ \OrliczGmixith(\vec\psi)$ can be defined similarly, with `` $\inf$" replaced by `` $\sup$". 
   \end{defn}

    Let  $i < j < k$.  For $ h_1, h_2 \in \Psi$,   H\"older's  inequality implies   \[   \left[ \OrliczAmixjth(\vec \psi) \right]^{k-i}  \ \leq  \ \left[ \OrliczAmixith(\vec \psi) \right]^{k-j}  \left[ \OrliczAmixkth(\vec \psi) \right]^{j-i} .\] This inequality  (with $i=0$ and $k=n$) together with Thoerem \ref{generalaffineineq}  imply, for instance, the following  isoperimetric inequality:  for $0 < j < n$,
   \[  \big[ \OrliczAmixjth(\vec \psi)  \big]^{n}  \ \leq \  \bigg[as^{orlicz} _{h_1, \breve{F_1}, \breve{F_1}} \Big(\frac{\|\cdot \|^2}{2\cdot \hat{c}_1^2}\Big)\bigg]^{n-j}   \bigg[as^{orlicz}_{h_2, \breve{F_2}, \breve{F_2}} \Big(\frac{\|\cdot \|^2}{2\cdot \hat{c}_2^2}\Big)   \bigg]^{j}, \] if $\vec h, \vec F^1, \vec F^2$ and $\vec \psi$  satisfy the same conditions as those for part (i) in Corollary \ref{cor--6};  while if they satisfy the same conditions as those for part (ii) in Corollary \ref{cor--6},  then 
    \[  \big[ \OrliczAmixjth(\vec \psi)\big ]^{n}  \ \leq \   \bigg [as^{orlicz}_{h_1, \breve{F_1}, \breve{F_1}} \Big(\frac{\bar{c}_1^2\cdot \|\cdot \|^2}{2}\Big) \bigg]^{n-j}  \bigg [as^{orlicz}_{h_2, \breve{F_2}, \breve{F_2}}\Big(\frac{\bar{c}_2^2 \cdot \|\cdot \|^2}{2}\Big) \bigg]^{j}. \]  
    Similar inequalities hold for $\OrliczGmixith(\vec\psi)$ as long as corresponding conditions verified. 
   
   \vskip 2mm \noindent {\bf Acknowledgments.} The research of DY is supported
 by a NSERC grant.  The authors are greatly indebted to the referee for many valuable comments which improve largely the quality of the paper.

 \vskip 5mm \small

\vskip 5mm 
\noindent 
Umut Caglar, \ \ \ {\small \tt ucaglar@fiu.edu}
\\
{\small \em Department of Mathematics and Statistics} \\
{\small \em Florida International University} \\
{\small \em Miami, FL 33199, U. S. A. }

\vskip 2mm \noindent Deping Ye, \ \ \ {\small \tt deping.ye@mun.ca}\\
{\small \em Department of Mathematics and Statistics\\
   Memorial University of Newfoundland\\
   St. John's, Newfoundland, Canada A1C 5S7 }
\end{document}